\renewcommand{\orcid}[1]{\href{https://orcid.org/#1}{\textcolor[HTML]{A6CE39}{orcid.org/#1}}}
\setlist[enumerate]{leftmargin=.5in}
\setlist[itemize]{leftmargin=.5in}
\crefname{hypothesis}{Hypothesis}{Hypotheses}
\title{Priorconditioned Sparsity-Promoting Projection Methods for {Deterministic} and Bayesian Linear Inverse Problems
\thanks{
\monthyeardate\today 
\corresponding{Jonathan Lindbloom} 
}}
\author{
Jonathan Lindbloom\thanks{Department of Mathematics, Dartmouth College, USA (\email{jonathan.t.lindbloom.gr@dartmouth.edu}, \orcid{0000-0002- 1789-2629})}
\and 
Mirjeta Pasha\thanks{Department of Mathematics \& Academy of Data Science, Virginia Tech, USA ( \email{mpasha@vt.edu}, \orcid{0000-0003-4249-2421})} 
\and
Jan Glaubitz\thanks{Department of Mathematics, Link\"oping University, Sweden, (\email{jan.glaubitz@liu.se}, \orcid{0000-0002-3434-5563})}
\and
Youssef Marzouk\thanks{Department of Aeronautics and Astronautics \& Laboratory for Information and Decision Systems, Massachusetts Institute of Technology, USA (\email{ymarz@mit.edu}, \orcid{0000-0001-8242-3290})} }
\newcommand{\diag} [1]  {{\rm diag\!}\left( #1 \right)}    
\newcounter{algo@row}
\newcounter{algo@rowindent}
\newcommand{\algofont}[1]{\textbf{#1}}
\newcommand{\algonumbersize}[1]{\scriptsize{#1}}
\newcommand{\algopreitem}[1][\arabic{algo@row}]{\texttt{\algonumbersize{#1}}}
\newcommand{\algoitemskip}{\hspace{\value{algo@rowindent}cc}}
\newcommand{\algonewnestedopen}[2]{
	\newcommand{#1}[1][]{%
		\ifthenelse{\equal{##1}{}}{\item}{\item[{\algopreitem[##1]}]}
		\algoitemskip\algofont{#2}%
		\addtocounter{algo@rowindent}{1}%
		\ignorespaces
	}
}
\newcommand{\algonewnestedaux}[2]{
	\newcommand{#1}[1][]{
		\addtocounter{algo@rowindent}{-1}
		\ifthenelse{\equal{##1}{}}{\item}{\item[{\algopreitem[##1]}]}
		\algoitemskip\algofont{#2}%
		\addtocounter{algo@rowindent}{+1}%
		\ignorespaces
	}
}
\newcommand{\algonewnestedclose}[2]{
	\newcommand{#1}[1][]{
		\addtocounter{algo@rowindent}{-1}
		\ifthenelse{\equal{##1}{}}{\item}{\item[{\algopreitem[##1]}]}
		\algoitemskip\algofont{#2}%
		\ignorespaces
	}
}
\newcommand{\algonewcommand}[2]{
	\newcommand{#1}[1][default]{
		\ifthenelse{\equal{##1}{default}}{\item}{\item[{\algopreitem[##1]}]}%
		\algoitemskip\algofont{#2}%
		\ignorespaces
	}%
}
\newcommand{\algonewkeyword}[2]{\newcommand{#1}{\algofont{#2}}}
\algonewcommand{\STATE}{\ignorespaces}
\algonewcommand{\INPUT}{Input: }
\algonewcommand{\pINPUT}{\phantom{Input: }}
\algonewcommand{\COMPUTE}{Compute: }
\algonewcommand{\OUTPUT}{Output: }
\algonewcommand{\pOUTPUT}{\phantom{Output: }}
\algonewnestedopen{\IF}{if }
\algonewnestedaux{\ELSEIF}{else if }
\algonewnestedaux{\ELSE}{else }
\algonewnestedclose{\ENDIF}{end if }
\algonewnestedopen{\FOR}{for }
\algonewnestedclose{\ENDFOR}{end for }
\algonewnestedopen{\WHILE}{while }
\algonewnestedclose{\ENDWHILE}{end while }
\algonewcommand{\BREAK}{break}%
\algonewkeyword{\To}{to }%
\algonewkeyword{\Do}{do }%
\algonewkeyword{\Then}{then }%
\algonewkeyword{\End}{end }%
\algonewkeyword{\AND}{and }%
\algonewkeyword{\True}{true }%
\algonewkeyword{\False}{false }%
\algonewkeyword{\irbleigs}{irbleigs }%
\algonewkeyword{\tridiag}{tridiag}%
\algonewkeyword{\reorth}{reorth}%
\newcommand{\MAP}{\mathrm{MAP}}
\DeclareMathOperator*{\argmin}{arg\,min}
\newcommand{\R}{\mathbb{R}}
\newcommand{\bA}{{\bf A}}
\newcommand{\bI}{{\bf I}}
\newcommand{\bQ}{{\bf Q}}
\newcommand{\bR}{{\bf R}}
\newcommand{\bV}{{\bf V}}
\newcommand{\bW}{{\bf W}}
\newcommand{\bb}{{\bf b}}
\newcommand{\be}{{\bf e}}
\newcommand{\br}{{\bf r}}
\newcommand{\bv}{{\bf v}}
\newcommand{\bw}{{\bf w}}
\newcommand{\bx}{{\bf x}}
\newcommand{\bz}{{\bf z}}
\newcommand{\regparam}{\mu}
\newcommand{\itidx}{\ell}
\newcommand*{\addFileDependency}[1]{
  \typeout{(#1)}
  \@addtofilelist{#1}
  \IfFileExists{#1}{}{\typeout{No file #1.}}
}
\begin{document}
\nolinenumbers
\maketitle
\begin{abstract}
{
High-quality reconstructions of signals and images with sharp edges are needed in a wide range of applications.
} To overcome the large dimensionality of the parameter space and the complexity of the regularization functional, {sparisty-promoting} techniques for both deterministic and hierarchical Bayesian regularization rely on solving a sequence of high-dimensional iteratively reweighted least squares (IRLS) problems on a lower-dimensional subspace. 
Generalized Krylov subspace (GKS) methods are a particularly potent class of hybrid Krylov schemes that efficiently solve sequences of IRLS problems by projecting large-scale problems into a relatively small subspace and successively enlarging it.
We refer to methods that promote sparsity and use GKS as S-GKS. 
A disadvantage of S-GKS methods is their slow convergence. 
In this work, we propose techniques that improve the convergence of S-GKS methods by combining them with priorconditioning, which we refer to as PS-GKS. 
Specifically, integrating the PS-GKS method into the IAS algorithm allows us to automatically select the shape/rate parameter of the involved generalized gamma hyper-prior, which is often fine-tuned otherwise. 
Furthermore, we proposed and investigated variations of the proposed PS-GKS method, including restarting and recycling (resPS-GKS and recPS-GKS). 
These respectively leverage restarted and recycled subspaces to overcome situations when memory limitations of storing the basis vectors are a concern.
We provide a thorough theoretical analysis showing the benefits of priorconditioning for sparsity-promoting inverse problems. Numerical experiment are used to illustrate that the proposed PS-GKS method and its variants are competitive with or outperform other existing hybrid Krylov methods.
\end{abstract}

\begin{keywords}
    priorconditioning, generalized Krylov subspace, sparsity, majorization minimization, generalized sparse Bayesian learning, {linear} inverse problems
\end{keywords}

\begin{AMS}

\end{AMS}

\begin{Code}
    \url{https://github.com/mpasha3/IRLS_prec_GSBL}
\end{Code}

\begin{DOI}
    Not yet assigned
\end{DOI}

\section{Introduction} 
\label{sec:intro} 
Recovering high-quality signals and images from indirect, incomplete, and noisy observations is a common yet challenging problem in various applications. The task is often modeled as a linear inverse problem
\begin{equation}\label{eq:IP}
    \bb = \bA \bx + \be, 
\end{equation}
where $\bb \in \R^M$ denotes the observed data, $\bx \in \R^N$ is the unknown parameter vector (e.g., the signal or the vectorized image), $\bA \in \R^{M \times N}$ is a known linear forward operator, and $\be \in \R^M$ corresponds to noise. 
 
The inverse problem \cref{eq:IP} is typically \emph{ill-posed}, resulting in the solution of \cref{eq:IP} being non-unique, not existing at all, or being highly sensitive. One way to overcome ill-posedness is through \emph{regularization}, wherein one instead seeks the solution to a nearby regularized inverse problem:

\begin{equation}\label{eq:regIP}
    \argmin_{\bx \in \R^N} \left\{ 
        \mathcal{F}(\bx {; \bb})
        + \mathcal{R}(\mathbf{x})
    \right\},
\end{equation}

where $\mathcal{F}$ and $\mathcal{R}$ denote the data-fidelity and regularization term, respectively.
The choice of $\mathcal{F}$ is informed by the assumptions about the data-generating process and noise characteristics. 
For simplicity, we assume that $\be$ is a realization of standard normal noise, i.e., $\be \sim \mathcal{N}(\mathbf{0},\mathbf{I})$, yielding $\mathcal{F}(\bx {; \bb}) = \|\bA\bx - \bb\|_2^2$.\footnote{
If $\be \sim \mathcal{N}(\mathbf{0},\boldsymbol{\Sigma})$ with a symmetric positive definite covariance matrix $\boldsymbol{\Sigma} \neq \mathbf{I}$, there exists a Cholesky decomposition $\boldsymbol{\Sigma} = \mathbf{C} \mathbf{C}^T$, and the problem can be whitened by substituting $\mathbf{A} \gets \mathbf{C}^{-1} \mathbf{A}$ and $\mathbf{b} \gets \mathbf{C}^{-1} \mathbf{b}$.}
{
The regularization term $\mathcal{R}$ encodes one's prior belief about the structure of the otherwise unknown parameter vector $\bx$. 
A common assumption is that $\bx$ is sparse or has a sparse representation $\boldsymbol{\Psi} \bx$ with sparsifying transform $\boldsymbol{\Psi} \in \mathbb{R}^{K \times N}$. 
For instance, $\boldsymbol{\Psi}$ can be a discrete gradient operator or a wavelet transformation.
} 
Sparsity for $\boldsymbol{\Psi} \bx$ can be promoted by selecting $\mathcal{R}$ as some scaled surrogate for the $\ell_0$-``norm" $\| \boldsymbol{\Psi} \bx \|_0$, which counts the number of non-zero components. 

An alternative to the above deterministic regularization setting is the \emph{Bayesian approach} to inverse problems \cite{stuart2010inverse,calvetti2023bayesian}. 
where we treat the unknown parameters as random variables and impose a prior distribution on them. 
For instance, assuming additive standard normal noise $\be \sim \mathcal{N}(\mathbf{0},\mathbf{I})$ in \cref{eq:IP} corresponds to a likelihood function $\pi( \mathbf{b} \vert \bx) \ \propto \  \exp \left (- \frac{1}{2}\| \bA \bx - \mathbf{b} \|^2_2 \right )$. 
Assuming further a prior distribution $\pi^{0}(\bx)$ for the random variable of interest $\bx$, Bayes' rule prescribes a formula for the posterior density $\pi_{\rm pos}(\bx|\bb) \ \propto \ \pi(\bb \vert \bx) \pi^{0}(\bx)$. 
Furthermore, the role of the regularization term is now taken by a prior distribution $\pi^{0}(\bx)$, encoding our structural belief about $\bx$---in this case, that it has a sparse representation. 
Finally, the sought-after posterior density $\pi_{\rm pos}(\bx|\bb)$ is provided by Bayes' rule as $\pi_{\rm pos}(\bx|\bb) \ \propto \ \pi(\bb \vert \bx) \pi^{0}(\bx)$.
A particularly potent class of sparsity-promoting priors is the generalized sparse Bayesian learning (GSBL) priors \cite{tipping2001sparse,calvetti2020sparse,glaubitz2023generalized}, where the main idea is to consider a joint prior $\pi^0( \mathbf{x}, \boldsymbol{\theta} ) = \pi^0( \mathbf{x} | \boldsymbol{\theta}) \pi^0( \boldsymbol{\theta} )$ that combines a conditional Gaussian prior $\pi^0( \mathbf{x} | \boldsymbol{\theta} )$ and a generalized gamma hyper-prior $\pi^0( \boldsymbol{\theta} )$. 
Here, $\boldsymbol{\theta} = [\theta_1,\dots,\theta_K]^T$ is a vector of auxiliary hyper-parameters that encode the sparsity profile of $\boldsymbol{\Psi} \bx$. 
In this paper, we focus on developing prior-conditioning strategies for S-GKS methods in both deterministic and Bayesian settings. 
Specifically, to handle GSBL priors in the Bayesian setting, we consider the iterative alternating sequential (IAS) algorithm \cite{calvetti2007gaussian,calvetti2019hierachical,calvetti2020sparse,lindbloom2024generalized}. 
A comprehensive discussion on deterministic regularization for linear inverse problems can be found in \cite{vogel2002computational,hansen2006deblurring,hansen2010discrete} and for the Bayesian setting {in} \cite{stuart2010inverse,calvetti2023bayesian,glaubitz2023generalized,dong2023inducing}. 

Both the S-GKS and the IAS {methods} aim to solve the inverse problem \cref{eq:IP} with {sparisty-promoting} assumptions and rely on efficiently performing the IRLS iterations
\begin{equation}\label{eq:IRLS}
    \bx_{\itidx+1} = \argmin_{\bx \in \R^{N}} \left\{ 
        \norm{ \bA \bx - \bb }_2^2 
        + \regparam \norm{ \bW_{\itidx+1} \boldsymbol{\Psi} \bx }_2^2
    \right\}, \quad \itidx = 0, 1, 2, \ldots,
\end{equation}
where the weight matrix $\bW_{\itidx+1}$ depends on the previous approximate solution $\bx_{\itidx}$. Here, we assume $\mathbf{W}_{\itidx+1} = \operatorname{diag}(\mathbf{w}_{\itidx+1})$ with $\mathbf{w}_{\itidx+1} \in \mathbb{R}_{++}^K$ containing strictly positive \emph{weights}. 
Solving \cref{eq:IRLS} can be computationally challenging and may easily exceed the memory limits of the device being used \cite{pasha2023recycling,pasha2023computational}. 
One approach {for overcoming these computational bottlenecks} is to solve \cref{eq:IRLS} on smaller-dimensional subspaces using hybrid projection methods \cite{pasha2023computational,pasha2023recycling,buccini2023limited,chung2024computational}. 
For instance, for a fixed $\regparam$, for which we can utilize the CGLS iterative method to solve \cref{eq:IRLS}. 
However, estimating $\regparam$ is crucial for ill-posed inverse problems to set a balance between the data fidelity and the regularization term. 
On the other hand, hybrid projection methods have potential to efficiently solve massive scale ill-posed inverse problems with complex regularization terms $\mathcal{R}(\bx)$, see for instance \cite{pasha2023computational, pasha2023recycling, buccini2023limited} and define the regularization parameter automatically, see for instance \cite{chung2024computational, pasha2024trips}. Furthermore, in the Bayesian setting, alternating between the $\bx$- and $\boldsymbol{\theta}$-updates can be computationally demanding and involves solving iteratively reweighted least squares problems. 
Moreover, estimating the hyper-parameters $\boldsymbol{\theta}$ requires solving the problem several times to fine-tune the desired parameter. Similarly, even though it has shown potential in many practical settings, S-GKS approach \cite{huang2017majorization, lanza2015generalized, buccini2020modulus} used for complex regularization terms can exhibit slow convergence and other computational limitations, as pointed out in several recent manuscripts \cite{pasha2023computational, pasha2023recycling, buccini2023limited}.

\subsection*{Our contribution}

We propose a novel priorconditioning {strategy} that can be used in sparsity-promoting techniques in both deterministic and Bayesian settings for ill-posed linear inverse problems. For each case, we present the weights and describe how priorconditioning can be used in the context of reweighting. {Furthermore, we propose variations of our method that employ restarting and recycling to overcome memory limitations. Specifically, this paper's main contributions can be summarized as follows:}
\begin{enumerate}
\item Driven by the need for computationally feasible methods for large-scale linear inverse problems, we propose a priorconditioning strategy {(referred to as ``PS-GKS")} that is based on GKS and can be used to efficiently solve {IRLS} problems arising from the deterministic or Bayesian setting. 
This allows us to efficiently and automatically select the model parameters (see below for details).
    
\item To overcome the computational bottleneck of computing the pseudoinverse needed for the priorconditioning, we propose a prior conditioned CG method that is then further accelerated by GPU, {making it} applicable for large-scale inverse problems. 
    
\item A comprehensive comparison to other sparsity-promoting methods that utilize Krylov subspaces is provided. 
In particular, we compare our PS-GKS method with competing methods based on the Golub-Kahan bidiagonalization and the flexible Golub-Kahan process. 
{Numerical experiments in 1D and 2D (X-ray computerized tomography (CT) applications)} illustrate our proposed method's performance.

\end{enumerate} 
We observe that our PS-GKS method significantly improves the existing S-GKS method by substantially reducing the number of iterations required for convergence. 
Such improvement is observed throughout several reweighting strategies used in both the deterministic and Bayesian formulation. 

\subsection*{Outline}

We begin in \Cref{sec:application} by motivating IRLS problems and the need for priorconditioning in two distinct settings: 

{The deterministic framework employing the majorization minimization (MM) weights \cite{huang2017majorization} combined with GKS and the GSBL framework using the IAS algorithm for efficient MAP estimation.} In \Cref{sec:background}, we provide background material on existing methods that rely on generalized Krylov subspaces for computational efficiency. The core contribution of the paper, including the proposed priorconditioning method, its theoretical properties, and various algorithmic refinements, are presented in \Cref{sec:proposed}. Finally, we demonstrate the effectiveness and versatility of our approach through a series of numerical experiments and comparative studies in \Cref{sec:numerics}.
In \Cref{sec:conclusion}, we conclude with a summary and outlook on possible directions for future research. 
All test problems and algorithm implementations in Python will be made publicly available at \href{https://github.com/mpasha3/IRLS_prec_GSBL}{https://github.com/mpasha3/IRLS\_prec\_GSBL} once the manuscript is accepted to the journal.


\section{Application to deterministic and Bayesian inverse problems}
\label{sec:application} 

We outline two motivating examples of sparsity-promoting methods for linear inverse problems that rely on efficiently solving a sequence of IRLS problems: One deterministic method arising from the MM approach to $\ell^p$-regularization and one Bayesian method arising from MAP estimation with sparsity-promoting GSBL priors. 
\subsection{Deterministic regularization and the MM approach}\label{sub:problem_MM} 

A common technique to promote sparsity in $\boldsymbol{\Psi} \mathbf{x}$ is to seek the solution to the deterministic problem
\begin{equation}\label{eq:general_tikhonov_problem}
    \argmin_{\bx\in \R^N} \left\{ \mathcal{J}(\bx) \right\}, \quad 
    \mathcal{J}(\bx) = \norm{ \mathbf{A} \mathbf{x} - \mathbf{b} }_2^2 
        + \frac{\regparam}{p} \norm{  \boldsymbol{\Psi}\mathbf{x} }_p^p,
\end{equation}
with $0<p\leq 1$. 
Notably, the objective $\mathcal{J}$ is generally neither smooth nor convex. 
The popular MM approach \cite{hunter2004tutorial,huang2017majorization} addresses this challenging structure of $\mathcal{J}$ by successively minimizing a sequence of smooth approximations--- so-called quadratic tangent majorants---of the original functional, resulting in an IRLS problem of the form \cref{eq:IRLS}. 
Specifically, a smooth approximation of the $p$-norm 
in \cref{eq:general_tikhonov_problem} is given by $\|\bz\|_p^p \approx \sum_{k} \phi_{p, \varepsilon}(\bz_k)$ with $\phi_{p, \varepsilon}(\bz_i) = (\bz_k^2+\varepsilon^2)^{p/2}$ being an approximation of $|\bz_k|^p$ and $\varepsilon>0$. 
The first step to building a quadratic tangent majorant consists of constructing the smoothed functional 
$\mathcal{J}_{\varepsilon}(\bx) = \|\bA\bx-\bb\|_2^2 + (\regparam/p) \sum_{k} \phi_{p, \varepsilon}([\boldsymbol{\Psi}\bx]_{k}).$ 
Let $\bx_{\itidx}$ be an available approximation of the desired solution. Assuming an adaptive quadratic majorant, we select the weighting matrix as 
\begin{equation}\label{eq:MM_original}
\bW_{\itidx+1} = \text{diag}\left({ (\boldsymbol{\Psi}\bx_{\itidx})^2+\varepsilon^2}\right)^{\frac{p-2}{4}},
\end{equation}
which yields the following quadratic tangent majorant for $\mathcal{J}_\varepsilon(\bx)$:
\begin{equation}\label{eq: QuadraticMajorantQ}
\mathcal{M}(\bx, \bx_{\itidx})  = \displaystyle{\frac{1}{2}} \|\bA\bx-\bb\|^{2}_{2}
+\displaystyle{\frac{\regparam}{2}} \|\bW_{\itidx+1}\boldsymbol{\Psi}\bx\|^{2}_{2}+c,
\end{equation}
where $c$ is a constant that is independent of $\bx$ and $\bx_{\itidx}$. 

The new approximation of the solution, $\mathbf{x}_{\itidx+1}$, is then obtained by minimizing \cref{eq: QuadraticMajorantQ} by a standard method such as CGLS. {The process of defining and minimizing a new quadratic tangent majorant is repeated, resulting in a sequence of IRLS problems as in \cref{eq:IRLS} where the weights are given as in \cref{eq:MM_original}. 

\subsection{Bayesian inverse problems: GSBL and the IAS approach} 
\label{sub:problem_Bayesian}

We next demonstrate how IRLS naturally arise in sparsity-promoting Bayesian approaches to linear inverse problems using hierarchical priors. 
For simplicity, we focus on efficient MAP estimation within the GSBL approach \cite{glaubitz2023generalized,xiao2023sequential,glaubitz2024leveraging} using the popular IAS algorithm \cite{calvetti2007gaussian,calvetti2019hierachical,calvetti2020sparse,lindbloom2024generalized}. 
Notably, combining the IAS algorithm with the proposed PS-GKS method later allows us to automate the selection of the rate parameter $\vartheta$ (which serves as a regularization parameter) of the generalized gamma hyper-prior, reducing the need for manual fine-tuning. 

In the Bayesian approach \cite{stuart2010inverse,calvetti2023bayesian}, the inverse problem \cref{eq:IP} is framed as a statistical inference problem based on the posterior distribution, which combines the likelihood function $f( \mathbf{x}; \mathbf{b} )$ implied by \cref{eq:IP} with a prior density $\pi^0$ that encodes our structural beliefs about $\mathbf{x}$. 
Consider the data model \cref{eq:IP} with whitened noise $\mathbf{e} \sim \mathcal{N}(\mathbf{0}, \mathbf{I})$. 
In this case, the likelihood is $f(\mathbf{x};\mathbf{b}) \propto \exp( -\frac{1}{2} \norm{ \bA \mathbf{x} - \mathbf{b} }_2^2 )$. 
To formulate the prior density $\pi^0$, we again assume that $\boldsymbol{\Psi} \mathbf{x} \in \R^{K}$ is sparse. 
A particularly potent class of sparsity-promoting priors that we consider in this work are the GSBL priors $\pi^0( \mathbf{x}, \boldsymbol{\theta} ) = \pi^0( \mathbf{x} | \boldsymbol{\theta} ) \pi^0( \boldsymbol{\theta} )$, combining a conditional Gaussian prior $\pi^0( \mathbf{x} | \boldsymbol{\theta} )$ and a generalized gamma hyper-prior $\pi^0( \boldsymbol{\theta} )$, where $\boldsymbol{\theta} = [\theta_1,\dots,\theta_K]$ is a vector of auxiliary hyper-parameters. 
Specifically, we assume that the $k$th component of $\boldsymbol{\Psi} \mathbf{x} \in \R^K$ is independently normal-distributed with mean zero and variance $\theta_k$, i.e., $[ \boldsymbol{\Psi} \mathbf{x} ]_k | \theta_k \sim \mathcal{N}(0,\theta_k)$.
The variance $\theta_k$ is also modeled as a random variable, which is generalized gamma-distributed, i.e., $\theta_k \sim \mathcal{GG}(r,\beta,\vartheta)$ with parameters $r \in \R \setminus \{0\}$, $\beta > 0$, and $\vartheta > 0$.
The resulting GSBL posterior density $\pi^{\mathbf{b}}$ for $(\mathbf{x}, \boldsymbol{\theta})$ conditioned on $\mathbf{b}$ follows from Bayes' theorem and is given by  
\begin{equation}\label{eq:posterior_GSBL}
\resizebox{.92\textwidth}{!}{$\displaystyle 
    \pi^{\mathbf{b}}(\mathbf{x},\boldsymbol{\theta}) 
	\propto \exp\left( 
		-\frac{1}{2} \norm{ \bA \mathbf{x} - \mathbf{b} }_2^2 
            -\frac{1}{2} \norm{ \mathbf{D}_{\boldsymbol{\theta}}^{-1/2} \boldsymbol{\Psi} \mathbf{x} }_2^2
            - \sum_{k=1}^K \left[ 
			\left( \frac{\theta_k}{\vartheta} \right)^r 
		      - \left(r \beta - \frac{3}{2} \right) \log \theta_k 
		\right]
	\right)
$}    
\end{equation}
with diagonal matrix $\mathbf{D}_{\boldsymbol{\theta}} = \diag{\boldsymbol{\theta}}$.

The \emph{MAP estimate} $(\mathbf{x}^{\MAP},\boldsymbol{\theta}^{\MAP})$ of $\pi^{\mathbf{b}}(\mathbf{x},\boldsymbol{\theta})$ is the maximizer of the joint posterior in \cref{eq:posterior_GSBL}. 
Equivalently, the MAP estimate is the minimizer of the negative logarithm of the joint posterior, i.e., $(\mathbf{x}^{\MAP},\boldsymbol{\theta}^{\MAP}) 
	= \argmin_{ \mathbf{x}, \boldsymbol{\theta} } \left\{ 
        \mathcal{J}( \mathbf{x}, \boldsymbol{\theta} ) 
    \right\}$ with $\mathcal{J} = - \log \pi^{\mathbf{b}}(\mathbf{x},\boldsymbol{\theta})$.
A prevalent strategy to approximate the minimizer of $\mathcal{J}$ is to use block-coordinate descent methods \cite{wright2015coordinate,beck2017first} that aim to minimize $\mathcal{J}$ by alternatingly minimizing $\mathbf{x}$ and $\boldsymbol{\theta}$. 
For MAP estimation of the GSBL posterior, the same strategy is leveraged by IAS. 
We refer to \cite{calvetti2020sparsity,lindbloom2024generalized} for details on the $\boldsymbol{\theta}$-update, which can be efficiently performed by finding the root of a simple quadratic function if $r = \pm 1$ and by solving an ordinary differential equation in all other cases. 
Furthermore, the $\mathbf{x}$-update reduces to solving a quadratic optimization problem 
\begin{equation}\label{eq:IAS_x_update} 
	\mathbf{x}_{\itidx+1} = \argmin_{\mathbf{x} \in \mathbb{R}^N} \left\{ 
        \norm{ \bA \mathbf{x} - \mathbf{b} }_2^2 
        + \norm{ \mathbf{D}_{\boldsymbol{\theta}_{\itidx+1}}^{-1/2} \boldsymbol{\Psi} \mathbf{x} }_2^2 
    \right\}.
\end{equation}

To place the IAS algorithm into the IRLS form of \cref{eq:IRLS}, we perform the change of variables $\boldsymbol{\xi}_{\itidx+1} = \boldsymbol{\theta}_{\itidx+1}/\vartheta$, transforming \cref{eq:IAS_x_update} into 
\begin{equation}\label{eq:IAS_GKS_x_update} 
		\mathbf{x}_{\itidx+1} = \argmin_{\mathbf{x} \in \mathbb{R}^N} \left\{ 
            \norm{ \bA \mathbf{x} - \mathbf{b} }_2^2 
            + \vartheta^{-1} \norm{ \mathbf{D}_{\boldsymbol{\xi}_{\itidx+1}}^{-1/2} \boldsymbol{\Psi} \mathbf{x} }_2^2 
        \right\}.
\end{equation} 
Notably, \cref{eq:IAS_GKS_x_update} will allow us to introduce the proposed PS-GKS method into the IAS algorithm, resulting in the automated selection of the rate parameter $\vartheta$ of the generalized gamma hyper-prior, removing the need for manually fine-tuning it. 

\begin{remark}
    While the above discussion focuses on the GSBL framework, it extends to any sparsity-promoting hierarchical prior based on scale-mixtures of normals \cite{beale1959scale,andrews1974scale}, including Laplace \cite{west1987scale,park2008bayesian,flock2024continuous},  horseshoe priors \cite{carvalho2009handling,uribe2023horseshoe,dong2023inducing}, and potentially Besov priors \cite{lan2023spatiotemporal}. 
\end{remark}


\section{Background} 
\label{sec:background}

As demonstrated above, IRLS problems arise naturally in various approaches to linear inverse problems. 
We now review hybrid projection methods, specifically the GKS approach, for efficiently solving IRLS problems of the form \cref{eq:IRLS}. 
Furthermore, we demonstrate the limitations of existing GKS methods, which subsequently motivate the development of {priorconditioning strategies in the context of hybrid methods}.

\subsection{Projected IRLS and GKS}
\label{sub:sgks}

For large-scale problems with thousands or millions of unknowns, the computational bottleneck of the IRLS problem \cref{eq:IRLS} is that we have to repeatedly solve high-dimensional least squares problems. 
This becomes extremely costly as iterative methods may require a large number of matrix-vector products (matvecs) with $\mathbf{A}$ and $\boldsymbol{\Psi}$ to produce a solution of sufficient quality \cite{pasha2023computational}. 
Moreover, traditional techniques \cite{vogel2002computational, hansen2010discrete} for selecting an appropriate regularization parameter $\regparam$ in \cref{eq:general_tikhonov_problem} rely on solving \cref{eq:general_tikhonov_problem} for a potentially large number of different $\regparam$-values, further increasing computational costs. 
To alleviate the computational burdens of solving IRLS problems, various hybrid projection methods have been introduced \cite{chung2024computational}, which replace \cref{eq:IRLS} with a sequence of projected IRLS problems:
\begin{equation}\label{eq:projected_IRLS}
    \bx_{\itidx+1} = \argmin_{\bx \in \mathcal{V}_\itidx} \left\{ 
        \norm{ \bA \bx - \bb }_2^2 
        + \regparam_{\itidx+1} \norm{ \bW_{\itidx+1} \boldsymbol{\Psi} \bx }_2^2
    \right\}, \quad \itidx = 0, 1, 2, \ldots, 
\end{equation}
where $\{ \mathcal{V}_\itidx \}_{\itidx \geq 0}$ is a nested sequence of low-dimensional approximation spaces. 
We denote their dimensions by $D_\itidx = \operatorname{dim}(\mathcal{V}_\itidx)$, also called \emph{basis sizes}.
The heuristic behind this approach is that solving each projected, $D_\itidx$-dimensional problem---including regularization parameter selection---is significantly cheaper compared to the original problem \cref{eq:IRLS}.

Here, we focus on projection methods based on generalized Krylov subspaces (GKS) for $\{ \mathcal{V}_\itidx \}_{\itidx \geq 0}$. 
The GKS approach was introduced in \cite{lampe2012large} to solve \cref{eq:general_tikhonov_problem} with $p = 2$. 
Subsequently, \cite{lanza2015generalized,huang2017majorization} combined the GKS method with a projected IRLS scheme of the form \cref{eq:projected_IRLS} to solve \cref{eq:general_tikhonov_problem} for any $0 < p < 2$, called the \emph{MM-GKS} approach. 

For generality, we consider MM-GKS as a subset of the broader class of \emph{sparsity-promoting GKS (S-GKS) methods} for solving \cref{eq:projected_IRLS}. 
These methods employ GKS for the subspaces and allow for any sparsity-promoting weights, including those derived from the GSBL approach in \Cref{sub:problem_Bayesian}. 
Specifically, the S-GKS method begins by selecting an initial $\mathcal{V}_0$ and $\mathbf{x}_0$. 
A typical choice for $\mathcal{V}_0$ is the standard Krylov subspace $\mathcal{V}_0 = \mathcal{K}_{h}(\mathbf{A}^T \mathbf{A}, \mathbf{A}^T \mathbf{b}) = {\rm span}\{(\bA^T\bA)^{0}\bA^T\bb,\ldots,(\bA^T\bA)^{h-1}\bA^T\bb\}$ with a relatively small $h$ (for instance, $h = 5$ see \cite{pasha2023computational,pasha2023recycling}). At the $(\itidx+1)$th iteration, one then computes $\mathbf{W}_{\itidx+1}$, $\boldsymbol{\Psi}_{\itidx+1} = \mathbf{W}_{\itidx+1} \boldsymbol{\Psi}$, and the economic QR factorizations $ \mathbf{A} \mathbf{V}_\itidx = \mathbf{Q}_{\mathbf{A}} \mathbf{R}_{\boldsymbol{\Psi}}, \boldsymbol{\Psi}_{\itidx+1} \mathbf{V}_\itidx = \mathbf{Q}_{\boldsymbol{\Psi}} \mathbf{R}_{\boldsymbol{\Psi}}$.  Here, $\mathbf{V}_{l}$ is a matrix whose columns form an orthonormal basis for $\mathcal{V}_{l}$. 
It can be computed using, for instance, the Golub--Kahan bidiagonalization \cite[\S 10.4]{golub2013matrix}. 
Using $\mathcal{V}_{l}$ allows to re-write the constraint problem \cref{eq:projected_IRLS} as the unconstrained problem 
\begin{align}\label{eq:sgks_projected_problem}
    \min_{\mathbf{z} \in \mathbb{R}^{D_\itidx} } \left\{ \| \mathbf{A} \mathbf{V}_{\itidx} \mathbf{z} - \mathbf{b} \|_2^2 + \regparam_{\itidx+1} \| \boldsymbol{\Psi}_{\itidx+1} \mathbf{V}_{\itidx} \mathbf{z} \|_2^2  \right\}.
\end{align}
One then substitutes the QR decompositions into \cref{eq:sgks_projected_problem} to obtain 
\begin{equation}\label{eq:l2_gks_qr_factored_problem}
    \min_{\mathbf{z} \in \mathbb{R}^{D_\itidx} } \left\{ \| \mathbf{R}_{\mathbf{A}} \mathbf{z} - \mathbf{Q}_{\mathbf{A}}^T \mathbf{b} \|_2^2 + \regparam_{\itidx+1} \| \mathbf{R}_{\boldsymbol{\Psi}} \mathbf{z} \|_2^2  \right\}.
\end{equation}
A regularization parameter selection method is then applied to choose $\regparam_{\itidx+1}$ in \cref{eq:l2_gks_qr_factored_problem}, which we comment on in \Cref{sub:dp_priorconditioned_projected}. 
Next, \cref{eq:l2_gks_qr_factored_problem} is solved and the solution $\mathbf{z}_{\itidx+1}$ is mapped back to the original $N$-dimensional space via $\mathbf{x}_{\itidx+1} = \mathbf{V}_{\itidx} \mathbf{z}_{\itidx+1}$. If convergence has not yet been confirmed, the residual vector $\mathbf{r}_{\itidx+1} = \mathbf{A}^T (\mathbf{A} \mathbf{V}_{\itidx} \mathbf{z}_{\itidx+1} - \mathbf{b}) + \regparam_{\itidx+1} \boldsymbol{\Psi}_{\itidx+1}^T (\boldsymbol{\Psi}_{\itidx+1} \mathbf{V}_{\itidx} \mathbf{z}_{\itidx+1})$ is incorporated into the subspace of the next iteration. 
This is done by by computing $\bv_{\rm new} = (\mathbf{I}_N - \mathbf{V}_\itidx \mathbf{V}_\itidx^T) \mathbf{r}_{\itidx+1}/\| (\mathbf{I}_N - \mathbf{V}_\itidx \mathbf{V}_\itidx^T) \mathbf{r}_{\itidx+1} \|_2 $, setting $\mathbf{V}_{\itidx+1} = [\mathbf{V}_{\itidx}, \mathbf{v}_{\text{new}}]$, and choosing $\mathcal{V}_{\itidx+1} = \operatorname{col}(\mathbf{V}_{\itidx+1})$. 
The above S-GKS iteration is repeated until a pre-specified convergence criterion is satisfied. 
We summarize the S-GKS method in \cref{alg:sgks}.

\begin{remark}[Restarting and recycling]\label{rem:restarting} 
    Several computational experiments \cite{pasha2023computational, pasha2023recycling, lindbloom2024generalized} have demonstrated that many iterations may be necessary for convergence when S-GKS is applied to large-scale problems.
    In this case, the computational costs of using a high-dimensional subspace quickly become prohibitive. 
    Furthermore, the associated storage requirement can easily exceed the memory capacity. 
    Recently, restarted and recycled variants of S-GKS \cite{buccini2023limited, pasha2023recycling} have been proposed: once the basis size reaches a dimension limit $D_{\text{max}}$, the basis is compressed into a smaller subspace of dimension $D_{\text{min}}$. 
    The resulting ``restarted'' S-GKS (resS-GKS) and ``recycled'' S-GKS (recS-GKS) method have a $\mathcal{O}(  D_{\itidx} N )$ memory requirement and $\mathcal{O}( D_{\itidx}^2 M )$  computational cost per iteration with $D_\itidx = D_{\text{min}} + \itidx \bmod (D_{\text{max}} + 1)$, which can be significantly cheaper than S-GKS for a large iteration index $\itidx$. 
\end{remark}

\subsection{An illustrative example}
\label{sub:illustrative_example} We present an illustrative one-dimensional example to highlight the current limitations of the S-GKS method. 
Specifically, we consider reconstructing a piecewise-constant discrete signal, $\mathbf{x}_{\text{true}} \in \mathbb{R}^{1000}$, from noisy observations of its first $50$ discrete cosine transform (DCT) coefficients. 
We define the noise level as $\sigma_{\text{NL}} = \sqrt{M} / \| \mathbf{A} \mathbf{x}_{\text{true}} \|_2$, which we set to 3\%. 
As the sparsifying operator $\boldsymbol{\Psi} \in \R^{K \times N}$, we use a usual discrete first derivative operator with right-hand side homogenous Dirichlet boundary condition, i.e., $[\boldsymbol{\Psi} \mathbf{x}]_k = x_{k} - x_{k+1}$ for $k<K$ and $[\boldsymbol{\Psi} \mathbf{x}]_K = x_K$. 
We examine reconstructions obtained using S-GKS with two different choices of weights: (1) MM weights with $p = 1$, $\varepsilon = 10^{-3}$, corresponding to $\ell_1$ regularization with weights as in \cref{eq:MM_original}, and (2) IAS weights with $r = -1$ and $\beta = -1$, which promote sparsity more aggressively than $\ell_1$-regularization \cite{calvetti2020sparse, calvetti2020sparsity}. 
As a baseline, we also consider the ``vanilla'' GKS method with equal weights $\mathbf{w}_{\itidx+1} = \mathbf{1}_K$.

\begin{figure}[tb]
    \centering
    \includegraphics[width=.99\textwidth]{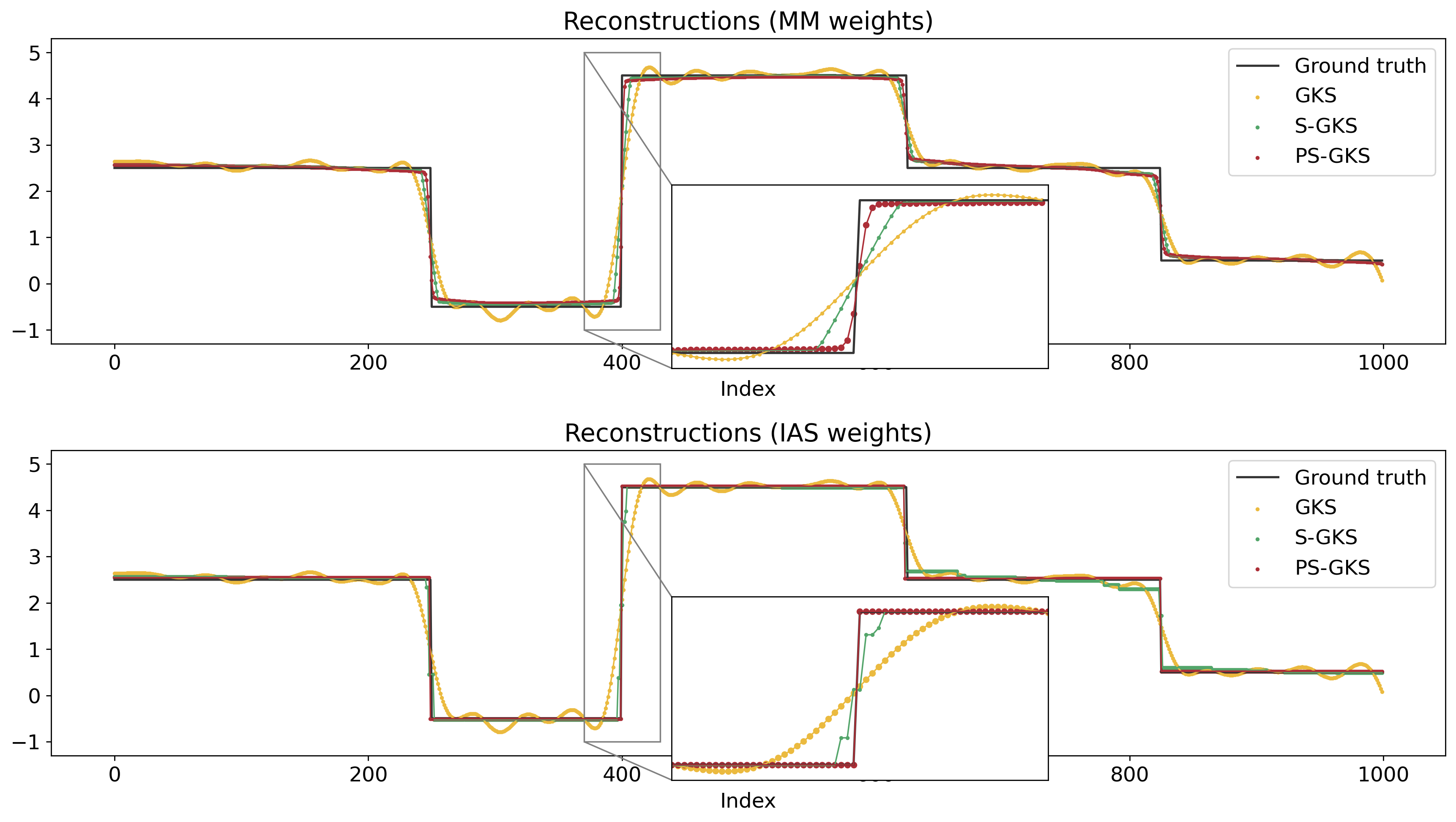}
    \caption{Comparison of GKS (with equal weights), S-GKS, and PS-GKS reconstructions for the 1D cosine problem with MM weights (top row) and IAS weights (bottom row).
    }
    \label{fig:1d_cosine_results_mm_and_ias}
\end{figure}

\begin{figure}[tb]
    \centering
    \includegraphics[width=.99\textwidth]{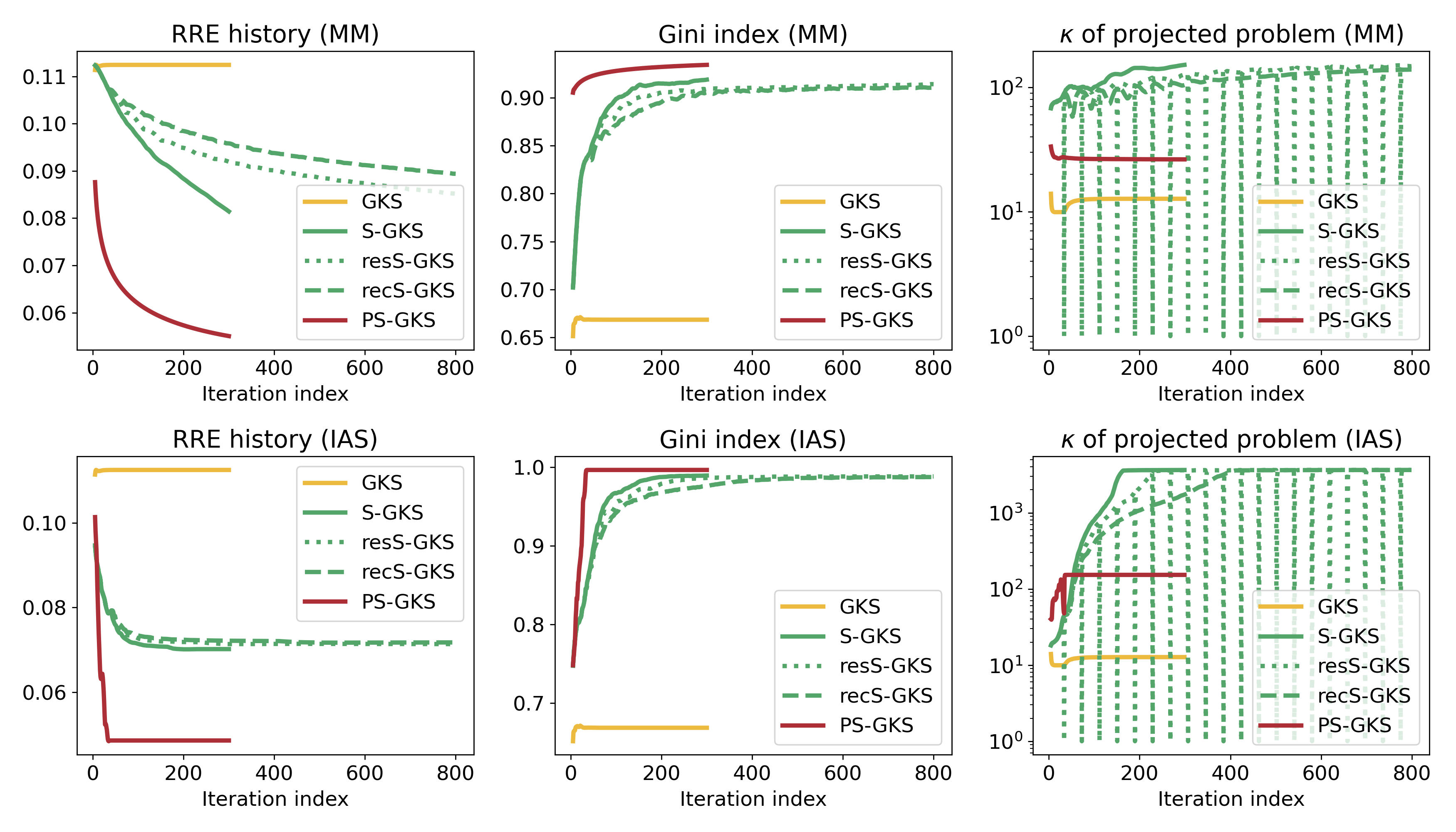}
    \caption{Performance comparison of the GKS (with equal weights), S-GKS (including restarting and recycling), and the proposed PS-GKS methods for the 1D cosine problem with MM (top row) and IAS (bottom row) weights. 
    Reported are the RRE (first column), the Gini index (measuring sparsity) of $\boldsymbol{\Psi} \mathbf{x}$ (second column), and the condition number of the projected least squares problem  (third column).
    }
\label{fig:1d_cosine_results_mm_and_ias_performance}
\end{figure}

\cref{fig:1d_cosine_results_mm_and_ias,fig:1d_cosine_results_mm_and_ias_performance} show the reconstructions and performance results for the GKS, S-GKS, and our new PS-GKS method, which we describe in detail in \Cref{sec:proposed}. 
Each method is run for 300 iterations. 
Additionally, we also show the results of resS-GKS with $D_{\text{max}} = 40$ and recS-GKS with $D_{\text{max}} = 40$ and $D_{\text{min}} = 20$, each run for 800 iterations. 
We observe in \cref{fig:1d_cosine_results_mm_and_ias_performance} that GKS converges rapidly to a smooth solution. 
At the same time, we observe that the S-GKS method fails to recover each of the discontinuities and does not appear to converge with either weight formulation. 
This might be explained by the condition number $\kappa$ of the projected least squares problem of S-GKS blowing up as a sparser solution is found. 
We provide a theoretical explanation for this phenomenon in  \Cref{sub:analysis_of_priorconditioning}. 

Although restarting and recycling make it possible to perform more iterations, res/recS-GKS still does not provide reconstructions competitive with our new PS-GKS method.


\section{Proposed method}
\label{sec:proposed} 

We observed above that neither S-GKS nor its restarted or recycled variants yielded satisfactory results for reconstructing a piecewise constant signal. 
To overcome this limitation, we introduce a new projection method that constructs prior conditioned generalized Krylov subspaces within a transformed space defined by the sparsifying transformation $\boldsymbol{\Psi}$.  
The resulting method, which we call \emph{priorconditioned S-GKS (PS-GKS)}, incurs a higher computational cost per iteration than S-GKS due to operations involving the precondition. 
However, this additional expense is offset by the method’s ability to produce sparser and more accurate solutions within an approximation subspace of modest dimension.

\subsection{Priorconditioning for the full-scale problem}
\label{sub:priorconditioning_full_scale}

 First, we review the priorconditioning technique applied to a single full-scale least squares problem in \cref{eq:IRLS}. 
 The crux of the technique is to seek a transformation under which the solution can be expressed in terms of a Tikhonov problem with regularization transformation equal to the identity. 
 Such a transformation performs a whitening by the prior and hence is referred to as \emph{priorconditioning}. 

It is well known that a transformation satisfying this property is provided by the standard form transformation for least squares problems \cite{elden1977algorithms, hansen2013oblique}: 
Let $\boldsymbol{\Psi}_{\itidx+1} = \mathbf{W}_{\itidx+1} \boldsymbol{\Psi}$ and $\mathbf{K} \in \mathbb{R}^{N \times P}$ be a full-rank matrix whose columns form an orthonormal basis for $\operatorname{ker}(\boldsymbol{\Psi})$.
Then, the standard form transformation applied to \cref{eq:IRLS} yields
\begin{equation}\label{eq:priorconditioning_transformation}
\resizebox{.92\textwidth}{!}{$\displaystyle 
    \argmin_{\mathbf{x} \in \mathbb{R}^N} \left\{ \| \mathbf{A} \mathbf{x} - \mathbf{b} \|_2^2 + \regparam \|  \boldsymbol{\Psi}_{\itidx+1} \mathbf{x} \|_2^2 \right\}  
    = (\boldsymbol{\Psi}_{\itidx+1} )_{\mathbf{A}}^\dagger \left( \argmin_{\mathbf{z} \in \mathbb{R}^K} \left\{ \| \overline{\mathbf{A}}_{\itidx+1} \mathbf{z} - \overline{\mathbf{b}} \|_2^2 + \regparam \| \mathbf{z} \|_2^2 \right\} \right) + \mathbf{x}_{\text{ker}},
$}
\end{equation}
where $\mathbf{x}_{\text{ker}} = \mathbf{K} (\mathbf{A} \mathbf{K})^\dagger \mathbf{b} \in \operatorname{ker}(\boldsymbol{\Psi})$, $\overline{\mathbf{A}}_{\itidx+1} = \mathbf{A} (\boldsymbol{\Psi}_{\itidx+1})_{\mathbf{A}}^\dagger$, $\overline{\mathbf{b}} = \mathbf{b} - \mathbf{A} \mathbf{x}_{\text{ker}}$, $(\boldsymbol{\Psi}_{\itidx+1})_{\mathbf{A}}^\dagger = \mathbf{E} \boldsymbol{\Psi}_{\itidx+1}^\dagger$, and $\mathbf{E} = \mathbf{I}_N - \mathbf{K} (\mathbf{A} \mathbf{K})^\dagger \mathbf{A}$. 
Furthermore, $(\boldsymbol{\Psi}_{\itidx+1})_{\mathbf{A}}^\dagger$ is known as the oblique ($\mathbf{A}$-weighted) pseudoinverse of $\boldsymbol{\Psi}_{\itidx+1}$; see \cite{hansen2013oblique}.  
We note that the singular values associated with the least squares problems appearing in \cref{eq:priorconditioning_transformation} may be very different; hence $(\boldsymbol{\Psi}_{\itidx+1} )_{\mathbf{A}}^\dagger$ functions as a precondition. 

\begin{remark}
Notable simplifications arise if $\boldsymbol{\Psi}^{-1}$ exists, in which case $(\boldsymbol{\Psi}_{\itidx+1})_{\mathbf{A}}^\dagger = \boldsymbol{\Psi}^{-1} \mathbf{W}_{\itidx+1}^{-1}$, or if $\boldsymbol{\Psi}$ has full column rank, in which case $(\boldsymbol{\Psi}_{\itidx+1})_{\mathbf{A}}^\dagger = \boldsymbol{\Psi}_{\itidx+1}^\dagger$. 
In either case, $\mathbf{x}_{\text{ker}} = \mathbf{0}_{N}$, eliminating the need for the matrix $\mathbf{K}$.  
Assuming $P > 0$, we observe that $\mathbf{A} \mathbf{K}$ forms a ``skinny'' full-rank matrix, whose pseudoinverse can be efficiently computed—even for large-scale problems—using the economic QR decomposition of $\mathbf{A} \mathbf{K}$. 
In contrast, computing $\boldsymbol{\Psi}_{\itidx+1}^\dagger$ is more demanding for large problems and may, itself, require an iterative approach. 
We discuss such efficient iterative approaches in \Cref{sec:pseudoinverses}.
\end{remark}

\subsection{Analysis of priorconditioning}
\label{sub:analysis_of_priorconditioning} 

We provide a theoretical analysis showing the benefits of priorconditioning in \cref{eq:priorconditioning_transformation} for sparsity-promoting inverse problems. 
To the best of our knowledge, such benefits have only been acknowledged heuristically in the literature \cite{calvetti2018bayes, uribe2023horseshoe, dong2023inducing, lindbloom2024generalized} except for \cite{nishimura2023prior}, where an analysis was provided for the case $\boldsymbol{\Psi} = \mathbf{I}_N$. 
Importantly, our investigation makes no assumptions on $\boldsymbol{\Psi}$ and permits it to be rank-deficient. 

Henceforth, let $\lambda_i(\mathbf{X})$ denote the $i$th largest eigenvalue of a square matrix $\mathbf{X}$ in descending order (counting multiplicities).
We begin by examining the suboptimal performance of the existing S-GKS method. 
By the Poincar\'{e} separation theorem \cite[Corollary 4.3.16]{johnson1985matrix}, the condition number of the projected least-squares problem in \cref{eq:sgks_projected_problem} cannot exceed that of the original (full-scale) problem in \cref{eq:IRLS}. 
However, the condition number of \cref{eq:IRLS} itself can be extremely large, especially when the weight vector $\mathbf{w}$ captures the sparsity pattern accurately. 
\cref{thm:original_eigenvalue_bounds} below sheds light on this phenomenon.

\begin{theorem}\label{thm:original_eigenvalue_bounds} 
    Let $\mathbf{Q}^{\text{st}}_{\regparam} =  \mathbf{A}^T \mathbf{A} + \regparam \boldsymbol{\Psi}^T \mathbf{W}^2 \boldsymbol{\Psi}$, $\mathbf{W} = \operatorname{diag}(\mathbf{w})$, and  $R = \operatorname{rank}(\boldsymbol{\Psi})$. 
    Then, the first $R$ largest eigenvalues of $\mathbf{Q}^{\text{st}}_{\regparam}$ satisfy 
    \begin{align}
        \lambda_N(\mathbf{A}^T \mathbf{A}) + \regparam \lambda_{R}(\boldsymbol{\Psi}^T \boldsymbol{\Psi}) \lambda_{i+(N-R)}(\mathbf{W}^2) 
            \leq \lambda_i( \mathbf{Q}^{\text{st}}_{\regparam} ) \leq \lambda_1(\mathbf{A}^T \mathbf{A}) + \mu \lambda_1( \boldsymbol{\Psi}^T \boldsymbol{\Psi}) \lambda_i(\mathbf{W}^2)
    \end{align}
    for $i = 1, \ldots, R$, and the remaining $N-R$ eigenvalues satisfy 
    \begin{align}
        \lambda_N(\mathbf{A}^T \mathbf{A}) \leq \lambda_i(\mathbf{Q}^{\text{st}}_{\regparam}) \leq \lambda_1(\mathbf{A}^T \mathbf{A}) + \regparam \lambda_1(\boldsymbol{\Psi}^T \boldsymbol{\Psi}) \lambda_i(\mathbf{W}^2)
    \end{align}
    for $i = R+1, \ldots, N$.
\end{theorem}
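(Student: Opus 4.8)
The plan is to express $\mathbf{Q}^{\text{st}}_{\regparam}$ as a sum of two symmetric positive semidefinite matrices of order $N$ and then combine \emph{additive} Weyl inequalities (to separate the contributions of the data and regularization terms) with \emph{multiplicative} singular-value inequalities (to factor the weighted regularization term into its $\boldsymbol{\Psi}$- and $\mathbf{W}$-parts). First I would set $\mathbf{X} = \mathbf{A}^T\mathbf{A}$ and $\mathbf{Y} = \regparam\,\boldsymbol{\Psi}^T\mathbf{W}^2\boldsymbol{\Psi} = \regparam\,(\mathbf{W}\boldsymbol{\Psi})^T(\mathbf{W}\boldsymbol{\Psi})$, so that $\mathbf{Q}^{\text{st}}_{\regparam} = \mathbf{X} + \mathbf{Y}$ with both summands PSD. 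Since $\mathbf{W}$ is invertible, $\operatorname{rank}(\mathbf{Y}) = \operatorname{rank}(\boldsymbol{\Psi}) = R$, and therefore $\lambda_i(\mathbf{Y}) = \regparam\,\sigma_i(\mathbf{W}\boldsymbol{\Psi})^2$ for $i \le R$ while $\lambda_i(\mathbf{Y}) = 0$ for $i > R$. This observation is what produces the split into the two index ranges in the statement.

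Next I would apply Weyl's inequalities for sums of Hermitian matrices \cite{johnson1985matrix}. For the upper bound, valid for every $i$, the choice of indices $(1,i)$ gives $\lambda_i(\mathbf{X}+\mathbf{Y}) \le \lambda_1(\mathbf{X}) + \lambda_i(\mathbf{Y})$. For the lower bound when $i \le R$, the choice of indices $(N,i)$ gives $\lambda_i(\mathbf{X}+\mathbf{Y}) \ge \lambda_N(\mathbf{X}) + \lambda_i(\mathbf{Y})$. For the remaining range $i > R$, I would instead use monotonicity under PSD perturbation, namely $\lambda_i(\mathbf{X}+\mathbf{Y}) \ge \lambda_i(\mathbf{X}) \ge \lambda_N(\mathbf{X})$, which yields the stated lower bound directly; substituting $\lambda_i(\mathbf{Y}) = 0$ into the upper bound (and then adding back the nonnegative term $\regparam\,\lambda_1(\boldsymbol{\Psi}^T\boldsymbol{\Psi})\lambda_i(\mathbf{W}^2)$) recovers the upper bound for $i>R$ as written. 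Thus both upper bounds come from the same argument, and only the lower bounds bifurcate.

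The remaining step, which I expect to be the crux, is converting $\sigma_i(\mathbf{W}\boldsymbol{\Psi})^2$ into the products of eigenvalues of $\boldsymbol{\Psi}^T\boldsymbol{\Psi}$ and $\mathbf{W}^2$. For the upper bound I would invoke the multiplicative Weyl inequality $\sigma_{i+j-1}(\mathbf{W}\boldsymbol{\Psi}) \le \sigma_i(\mathbf{W})\sigma_j(\boldsymbol{\Psi})$ with $j = 1$, giving $\sigma_i(\mathbf{W}\boldsymbol{\Psi}) \le \sigma_i(\mathbf{W})\sigma_1(\boldsymbol{\Psi})$ and hence, after squaring, $\sigma_i(\mathbf{W}\boldsymbol{\Psi})^2 \le \lambda_1(\boldsymbol{\Psi}^T\boldsymbol{\Psi})\,\lambda_i(\mathbf{W}^2)$. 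For the lower bound I would exploit the invertibility of $\mathbf{W}$ by writing $\boldsymbol{\Psi} = \mathbf{W}^{-1}(\mathbf{W}\boldsymbol{\Psi})$ and applying the \emph{same} multiplicative inequality in reverse: choosing indices $R-i+1$ and $i$ yields $\sigma_R(\boldsymbol{\Psi}) \le \sigma_{R-i+1}(\mathbf{W}^{-1})\,\sigma_i(\mathbf{W}\boldsymbol{\Psi})$, and since the singular values of $\mathbf{W}^{-1}$ are the reciprocals of those of $\mathbf{W}$ in reverse order, this rearranges to $\sigma_i(\mathbf{W}\boldsymbol{\Psi}) \ge \sigma_R(\boldsymbol{\Psi})\,\sigma_{i+K-R}(\mathbf{W})$. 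Squaring and inserting into the Weyl lower bound then produces $\lambda_i(\mathbf{Q}^{\text{st}}_{\regparam}) \ge \lambda_N(\mathbf{A}^T\mathbf{A}) + \regparam\,\lambda_R(\boldsymbol{\Psi}^T\boldsymbol{\Psi})\,\lambda_{i+K-R}(\mathbf{W}^2)$.

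I expect the main obstacle to lie entirely in this last manipulation: selecting the correct rectangular, rank-deficient form of the multiplicative singular-value inequality and keeping the eigenvalue indices consistent. In particular, the argument naturally tracks the eigenvalue of $\mathbf{W}^2$ at position $i+K-R$, so confirming that this coincides with the stated index $i+N-R$ (which it does in the common and running-example case $K=N$) is the bookkeeping point that must be checked carefully. Once the multiplicative bound is pinned down with the right indices, the rest of the proof is a routine assembly of the additive Weyl estimates with the factored singular-value estimates over the two ranges $1 \le i \le R$ and $R+1 \le i \le N$.
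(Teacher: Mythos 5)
Your proof is correct and follows the same two-step skeleton as the paper's---write $\mathbf{Q}^{\text{st}}_{\regparam}$ as a sum of positive semidefinite matrices and apply additive Weyl inequalities, then bound the spectrum of the weighted regularization term multiplicatively---but your key lemma for the multiplicative step is genuinely different. The paper invokes its own rank-deficient rectangular generalization of Ostrowski's theorem (\cref{thm:ostrowski}, proved in the appendix via the SVD, Cauchy interlacing, and the square Ostrowski theorem), applied to $\boldsymbol{\Psi}^T \mathbf{W}^2 \boldsymbol{\Psi}$ with middle matrix $\mathbf{W}^2$. You instead use the classical multiplicative Weyl/Horn inequality $\sigma_{i+j-1}(\mathbf{X}\mathbf{Y}) \leq \sigma_i(\mathbf{X})\,\sigma_j(\mathbf{Y})$ twice: once on $\mathbf{W}\boldsymbol{\Psi}$ for the upper bound, and once on the factorization $\boldsymbol{\Psi} = \mathbf{W}^{-1}(\mathbf{W}\boldsymbol{\Psi})$ for the lower bound. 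Your route is more elementary (only textbook inequalities, no appendix machinery), but it relies on the invertibility of $\mathbf{W}$---harmless here, since the weights are strictly positive---whereas \cref{thm:ostrowski} makes no such assumption and earns its keep by being reused in the proof of \cref{thm:main_sparsity_theorem}, where the middle matrices ($\mathbf{E}^T\mathbf{E}$, pseudoinverse products) are genuinely singular. One further point: what you call a bookkeeping issue is in fact a correction to the statement, not just to your own derivation. Both your argument and the paper's (since $\mathbf{W}^2 \in \R^{K \times K}$, the paper's \cref{thm:ostrowski} applies with its ``$N$'' equal to $K$) produce the index $i + (K-R)$ on the eigenvalues of $\mathbf{W}^2$, and this coincides with the stated $i + (N-R)$ only when $K = N$. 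For rectangular $\boldsymbol{\Psi}$ the bound as stated can fail: with $K = 2$, $N = R = 1$, $\boldsymbol{\Psi} = [1, 0]^T$, $\mathbf{W} = \operatorname{diag}(\varepsilon, 1)$, $\mathbf{A} = \mathbf{0}$, and $\regparam = 1$, the stated lower bound for $i = 1$ equals $\lambda_1(\mathbf{W}^2) = 1$, while $\lambda_1(\mathbf{Q}^{\text{st}}_{\regparam}) = \varepsilon^2$. So the theorem's index should read $i + (K-R)$, consistent with \cref{thm:ostrowski}; your version is the correct general one.
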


\begin{proof} 
    The statement follows from standard bounds on the eigenvalues of sums of symmetric matrices and a generalization of Ostrowski’s theorem (see \cref{thm:ostrowski}).  
\end{proof}

\cref{thm:original_eigenvalue_bounds} implies the following lower bound for the condition number of $\mathbf{Q}_{\regparam}^{\text{st}}$:
\begin{equation}\label{eq:cond_number_Qst}
    \kappa\left( \mathbf{Q}_{\regparam}^{\text{st}} \right) 
    \geq \frac{
        \lambda_N(\mathbf{A}^T \mathbf{A}) + \regparam \lambda_{N}(\boldsymbol{\Psi}^T \boldsymbol{\Psi}) \lambda_1(\mathbf{W}^{2}) 
    }{
        \lambda_1(\mathbf{A}^T \mathbf{A}) + \regparam \lambda_{1}(\boldsymbol{\Psi}^T \boldsymbol{\Psi}) \lambda_N(\mathbf{W}^{2})
    },
\end{equation} 
assuming $\operatorname{rank}(\boldsymbol{\Psi}) = N$.
The lower bound \cref{eq:cond_number_Qst} reveals the dependency of the condition number on the scaling in $\mathbf{W}^2$ and the regularization parameter $\regparam$.
Consider the typical situation where $\mathbf{W} = \diag{\mathbf{w}}$ accurately encodes the sparsity profile, i.e., $w_k \approx 0$ if $[\boldsymbol{\Psi} \mathbf{x}_{\text{truth}}]_k = 0$ (which is true for most of the entries) and $w_k \gg 0$ otherwise.
In this case, $\lambda_1(\mathbf{W}^{2}) \gg \lambda_N(\mathbf{W}^{2})$ and the right hand side of \cref{eq:cond_number_Qst} implies a detremantally large condition number for $\mathbf{Q}_{\regparam}^{\text{st}}$ and the S-GKS method, provided that $\mu$ is not too small. 

In contrast, we next show in \cref{thm:main_sparsity_theorem} that the normal equations for the priorconditioned formulation of the RHS of \cref{eq:priorconditioning_transformation} enjoys a clustered spectrum.

\begin{theorem}\label{thm:main_sparsity_theorem} 
    Let $\mathbf{Q}_{\regparam}^{\text{pr}} = \overline{\mathbf{A}}^T \overline{\mathbf{A}} + \mu \mathbf{I}_K$ with  $\overline{\mathbf{A}} = \mathbf{A} (\mathbf{W} \boldsymbol{\Psi})_{\mathbf{A}}^\dagger$ where  $(\mathbf{W} \boldsymbol{\Psi})_{\mathbf{A}}^\dagger = \mathbf{E} (\mathbf{W} \boldsymbol{\Psi})^\dagger$ is the oblique ($\mathbf{A}$-weighted) pseudoinverse of $\mathbf{W} \boldsymbol{\Psi}$ as in \Cref{sub:priorconditioning_full_scale}, and let $R = \operatorname{rank}(\boldsymbol{\Psi})$. 
    Then, the first $R$ largest eigenvalues of $\mathbf{Q}_{\regparam}^{\text{pr}}$ satisfy 
    \begin{align}
        \mu \leq \lambda_i(\mathbf{Q}_{\regparam}^{\text{pr}}) \leq \mu +  \min\left\{ c_1 \lambda_i(\mathbf{A}^T \mathbf{A}), c_2   \lambda_i(\mathbf{W}^{-2}) \right\}
    \end{align}
    for $i = 1, \ldots, R$, where $c_1 = \lambda_1(\mathbf{W}^{-2})/\lambda_R(\boldsymbol{\Psi}^T \boldsymbol{\Psi})$ and $c_2 = \lambda_1(\mathbf{A}^T \mathbf{A})/\lambda_{R}(\boldsymbol{\Psi}^T \boldsymbol{\Psi})$ are constants independent of $i$. 
    Moreover, the remaining $K-R$ eigenvalues of $\mathbf{Q}_{\mu}^{\text{pr}}$ are all equal to $\mu$.
\end{theorem}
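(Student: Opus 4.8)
The plan is to reduce the spectral statement to two ingredients: a rank count that pins down the $K-R$ eigenvalues equal to $\mu$, and a pair of multiplicative eigenvalue inequalities that control the nonzero part of the spectrum. First, since $\mathbf{Q}_\mu^{\text{pr}} = \overline{\mathbf{A}}^T\overline{\mathbf{A}} + \mu\mathbf{I}_K$ is a rigid shift of a PSD matrix, $\lambda_i(\mathbf{Q}_\mu^{\text{pr}}) = \mu + \lambda_i(\overline{\mathbf{A}}^T\overline{\mathbf{A}})$ for every $i$, which gives the lower bound $\mu \le \lambda_i$ automatically. Because $\overline{\mathbf{A}} = \mathbf{A}\mathbf{E}(\mathbf{W}\boldsymbol{\Psi})^\dagger$ and $\operatorname{rank}((\mathbf{W}\boldsymbol{\Psi})^\dagger) = \operatorname{rank}(\boldsymbol{\Psi}) = R$ (as $\mathbf{W}$ is invertible), we get $\operatorname{rank}(\overline{\mathbf{A}}) \le R$, so $\overline{\mathbf{A}}^T\overline{\mathbf{A}}$ has at least $K-R$ zero eigenvalues; hence the last $K-R$ eigenvalues of $\mathbf{Q}_\mu^{\text{pr}}$ equal $\mu$, and it remains only to bound $\lambda_i(\overline{\mathbf{A}}^T\overline{\mathbf{A}})$ from above for $i = 1, \ldots, R$.

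The crucial structural observation, which tames the oblique projector $\mathbf{E} = \mathbf{I}_N - \mathbf{K}(\mathbf{A}\mathbf{K})^\dagger\mathbf{A}$, is the identity $\mathbf{A}\mathbf{E} = (\mathbf{I}_M - \mathbf{P})\mathbf{A}$, where $\mathbf{P} = \mathbf{A}\mathbf{K}(\mathbf{A}\mathbf{K})^\dagger$ is the \emph{orthogonal} projector onto $\operatorname{col}(\mathbf{A}\mathbf{K})$. Consequently $\mathbf{F} := \mathbf{E}^T\mathbf{A}^T\mathbf{A}\mathbf{E} = \mathbf{A}^T(\mathbf{I}_M - \mathbf{P})\mathbf{A} \preceq \mathbf{A}^T\mathbf{A}$ in the Loewner order, so $\lambda_i(\mathbf{F}) \le \lambda_i(\mathbf{A}^T\mathbf{A})$ for all $i$. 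Writing $\mathbf{C} := (\boldsymbol{\Psi}^T\mathbf{W}^2\boldsymbol{\Psi})^\dagger = (\mathbf{W}\boldsymbol{\Psi})^\dagger((\mathbf{W}\boldsymbol{\Psi})^\dagger)^T$ and using $\overline{\mathbf{A}}^T\overline{\mathbf{A}} = ((\mathbf{W}\boldsymbol{\Psi})^\dagger)^T\mathbf{F}(\mathbf{W}\boldsymbol{\Psi})^\dagger$, the nonzero eigenvalues of $\overline{\mathbf{A}}^T\overline{\mathbf{A}}$ coincide with those of the PSD-product $\mathbf{F}\mathbf{C}$. I would then apply the two multiplicative eigenvalue inequalities $\lambda_i(\mathbf{F}\mathbf{C}) \le \lambda_i(\mathbf{F})\lambda_1(\mathbf{C})$ and $\lambda_i(\mathbf{F}\mathbf{C}) \le \lambda_1(\mathbf{F})\lambda_i(\mathbf{C})$ (each obtained by symmetrizing to $\mathbf{F}^{1/2}\mathbf{C}\mathbf{F}^{1/2}$ or $\mathbf{C}^{1/2}\mathbf{F}\mathbf{C}^{1/2}$ and invoking Loewner monotonicity). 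Together with $\lambda_i(\mathbf{F}) \le \lambda_i(\mathbf{A}^T\mathbf{A})$ and $\lambda_1(\mathbf{F}) \le \lambda_1(\mathbf{A}^T\mathbf{A})$, both entries of the claimed $\min\{\cdot,\cdot\}$ follow once the single estimate $\lambda_i(\mathbf{C}) \le \lambda_i(\mathbf{W}^{-2})/\lambda_R(\boldsymbol{\Psi}^T\boldsymbol{\Psi})$ is in hand.

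The remaining, most delicate, step is precisely this bound on $\lambda_i(\mathbf{C})$, where the rank-deficiency of $\boldsymbol{\Psi}$ and the weighting by $\mathbf{W}$ must be disentangled simultaneously. I would introduce the thin SVD $\boldsymbol{\Psi} = \mathbf{U}_{\boldsymbol{\Psi}}\boldsymbol{\Sigma}_{\boldsymbol{\Psi}}\mathbf{V}_{\boldsymbol{\Psi}}^T$ with $\boldsymbol{\Sigma}_{\boldsymbol{\Psi}} \in \mathbb{R}^{R\times R}$ invertible, which gives $\mathbf{C} = \mathbf{V}_{\boldsymbol{\Psi}}\boldsymbol{\Sigma}_{\boldsymbol{\Psi}}^{-1}(\mathbf{U}_{\boldsymbol{\Psi}}^T\mathbf{W}^2\mathbf{U}_{\boldsymbol{\Psi}})^{-1}\boldsymbol{\Sigma}_{\boldsymbol{\Psi}}^{-1}\mathbf{V}_{\boldsymbol{\Psi}}^T$, so the nonzero eigenvalues of $\mathbf{C}$ equal those of the $R\times R$ matrix $\boldsymbol{\Sigma}_{\boldsymbol{\Psi}}^{-1}(\mathbf{U}_{\boldsymbol{\Psi}}^T\mathbf{W}^2\mathbf{U}_{\boldsymbol{\Psi}})^{-1}\boldsymbol{\Sigma}_{\boldsymbol{\Psi}}^{-1}$. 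Applying the generalized Ostrowski theorem (\cref{thm:ostrowski}) to this congruence by $\boldsymbol{\Sigma}_{\boldsymbol{\Psi}}^{-1}$ bounds the multiplicative factor by $\lambda_1(\boldsymbol{\Sigma}_{\boldsymbol{\Psi}}^{-2}) = 1/\lambda_R(\boldsymbol{\Psi}^T\boldsymbol{\Psi})$, reducing the claim to $\lambda_i((\mathbf{U}_{\boldsymbol{\Psi}}^T\mathbf{W}^2\mathbf{U}_{\boldsymbol{\Psi}})^{-1}) \le \lambda_i(\mathbf{W}^{-2})$. This last inequality is exactly the Poincar\'{e} separation theorem applied to the compression of $\mathbf{W}^2$ by the orthonormal columns of $\mathbf{U}_{\boldsymbol{\Psi}}$, after passing to inverses and reversing indices.

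I expect the $\lambda_i(\mathbf{C})$ estimate to be the main obstacle, since it is the only place where the interaction between the singular subspaces of $\boldsymbol{\Psi}$ and the diagonal weights is genuinely exploited, and where rank-deficiency forces the use of pseudoinverses and interlacing rather than a plain Ostrowski congruence. The rest of the argument is bookkeeping once the identity $\mathbf{A}\mathbf{E} = (\mathbf{I}_M - \mathbf{P})\mathbf{A}$ has eliminated the apparent spectral amplification caused by the non-orthogonal projector $\mathbf{E}$; indeed, separating $\mathbf{E}$ from $\mathbf{A}$ prematurely would make $\mathbf{E}$ inflate the relevant eigenvalues, so keeping $\mathbf{A}\mathbf{E}$ intact as an orthogonal projection of $\mathbf{A}$ is what makes the clean constants $c_1$ and $c_2$ achievable.
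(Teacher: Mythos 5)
Your proposal is correct, and its overall architecture matches the paper's proof in \Cref{app:proof_eigenvalue_thm}: shift the spectrum by $\mu$, account for the $K-R$ eigenvalues equal to $\mu$ through a rank argument, peel the problem apart with Ostrowski-type multiplicative eigenvalue inequalities, and control the pseudoinverse factor so that only $\lambda_R(\boldsymbol{\Psi}^T\boldsymbol{\Psi})$ and the spectrum of $\mathbf{W}^{-2}$ survive. Your handling of that last factor (thin SVD of $\boldsymbol{\Psi}$, Ostrowski for the congruence by $\boldsymbol{\Sigma}_{\boldsymbol{\Psi}}^{-1}$, then Poincar\'{e} separation for the compression $\mathbf{U}_{\boldsymbol{\Psi}}^T\mathbf{W}^2\mathbf{U}_{\boldsymbol{\Psi}}$) is essentially interchangeable with the paper's device of a square factor $\hat{\boldsymbol{\Psi}}$ with $\boldsymbol{\Psi}\boldsymbol{\Psi}^T = \hat{\boldsymbol{\Psi}}\hat{\boldsymbol{\Psi}}^T$ followed by a second application of \cref{thm:ostrowski}. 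The genuine divergence is the treatment of the oblique projector $\mathbf{E}$, and here your route buys rigor that the published argument lacks. The paper separates $\mathbf{E}$ from $\mathbf{A}$, bounding $\lambda_i\bigl(((\mathbf{W}\boldsymbol{\Psi})^\dagger)^T\mathbf{E}^T\mathbf{E}(\mathbf{W}\boldsymbol{\Psi})^\dagger\bigr) \leq \lambda_1(\mathbf{E}^T\mathbf{E})\,\lambda_i\bigl(((\mathbf{W}\boldsymbol{\Psi})^\dagger)^T(\mathbf{W}\boldsymbol{\Psi})^\dagger\bigr)$ and then discarding the factor $\lambda_1(\mathbf{E}^T\mathbf{E})$ on the grounds that $\mathbf{E}$ is idempotent with eigenvalues in $\{0,1\}$. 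That justification conflates eigenvalues with singular values: for an oblique projector $\lambda_1(\mathbf{E}^T\mathbf{E}) = \|\mathbf{E}\|_2^2 \geq 1$, with equality only in the orthogonal case, so the inequality $\lambda_1(\mathbf{E}^T\mathbf{E}) \leq 1$ invoked there fails in general. Your identity $\mathbf{A}\mathbf{E} = (\mathbf{I}_M - \mathbf{P})\mathbf{A}$, with $\mathbf{P} = \mathbf{A}\mathbf{K}(\mathbf{A}\mathbf{K})^\dagger$ the orthogonal projector onto $\operatorname{col}(\mathbf{A}\mathbf{K})$, makes the issue moot: it yields $\mathbf{E}^T\mathbf{A}^T\mathbf{A}\mathbf{E} = \mathbf{A}^T(\mathbf{I}_M - \mathbf{P})\mathbf{A} \preceq \mathbf{A}^T\mathbf{A}$ in the Loewner order, so the projector can never inflate the spectrum---precisely the pitfall you flag in your closing paragraph. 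In this one step your argument does not merely differ from the paper's; it repairs it, and your remaining ingredients (the PSD-product bounds $\lambda_i(\mathbf{F}\mathbf{C}) \leq \min\{\lambda_i(\mathbf{F})\lambda_1(\mathbf{C}),\, \lambda_1(\mathbf{F})\lambda_i(\mathbf{C})\}$ and the estimate $\lambda_i(\mathbf{C}) \leq \lambda_i(\mathbf{W}^{-2})/\lambda_R(\boldsymbol{\Psi}^T\boldsymbol{\Psi})$) are sound and deliver both entries of the stated $\min$ together with the trailing eigenvalues equal to $\mu$.
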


\begin{proof}
    See \Cref{app:proof_eigenvalue_thm}. 
\end{proof}

\cref{thm:main_sparsity_theorem} implies the following upper bound for the condition number of $\mathbf{Q}_{\mu}^{\text{pr}}$:
\begin{align}\label{eq:kappa_pr_upper_bound}
    \kappa(\mathbf{Q}_{\mu}^{\text{pr}}) \leq 1 + \frac{\lambda_1(\mathbf{A}^T \mathbf{A}) \lambda_1(\mathbf{W}^{-2}) }{ \regparam \lambda_R(\boldsymbol{\Psi}^T \boldsymbol{\Psi}) }.
\end{align}
Comparing \cref{eq:kappa_pr_upper_bound} with \cref{eq:cond_number_Qst}, we see that the linear system resulting from priorconditioning is typically significantly better conditioned than the original one. 
In particular, the upper bound \cref{eq:kappa_pr_upper_bound} for priorconditioned linear systems is independent of the relative scaling of the weights $\mathbf{w}$ and improves for increasing $\regparam$. 
This observation indicates that the proposed priorconditioning is particularly advantageous for strongly sparsity-promoting approaches. 

It is important to note that not only the condition number but also the clustering of eigenvalues will significantly influence the performance of iterative methods such as CG. 

This is due to the polynomial best approximation property of CG \cite[Lemma 3.14]{ciaramella2022iterative}, which states for a generic system $\mathbf{Q} \mathbf{u} = \mathbf{v}$ that the $\itidx$th iteration satisfies 
\begin{align}\label{eq:cg_best_approximation}
    \| \mathbf{u} - \mathbf{u}_{\itidx} \|_{\mathbf{Q}} = \min_{ \substack{p \in \mathcal{P}_{\itidx} \\ p(0) = 1} }  \| p(\mathbf{Q}) (\mathbf{u} - \mathbf{u}_0) \|_{\mathbf{Q}},
\end{align}
where $\mathcal{P}_{\itidx}$ denotes the set of polynomials of degree at most $\itidx$. That is, CG implicitly fits a polynomial to the spectrum of $\mathbf{Q}$. 
The resulting polynomial best approximation is expected to be more accurate when the spectrum of $\mathbf{Q}$ is clustered. Hence, better clustering leads to accelerated convergence for CG with fewer basis vectors. 
From \cref{thm:main_sparsity_theorem}, the spectrum of $\mathbf{Q}_{\regparam}^{\text{pr}}$ is comprised of $R$ eigenvalues decaying to $\regparam$ at least as rapidly as  $\regparam + c_1 \lambda_i(\mathbf{A}^T \mathbf{A})$ or $\regparam + c_2 \lambda_i(\mathbf{W}^{-2})$ (whichever is faster), along with the eigenvalue $\regparam$ repeated $K-R$ times. 
Consequently, $\mathbf{Q}_{\regparam}^{\text{pr}}$ can have at most $\operatorname{rank}(\mathbf{A})+1$ distinct eigenvalues. 
Furthermore, if the weights $\mathbf{w}$ are close to encoding a sparse solution with $S$ ``small'' components (identifying the support) and $K-S$ ``large'' components, then we expect the spectrum to exhibit at most $S$+1 clusters. 
In summary, our above analysis indicates that priorconditioning becomes particularly effective when (i) the weights strongly promote sparsity, (ii) the singular values of $\mathbf{A}$ decay fast, or (iii) the regularization parameter $\regparam$ is increased. This should be compared to \cref{thm:original_eigenvalue_bounds}, which  indicates that the eigenvalues of $\mathbf{Q}_{\regparam}^{\text{st}}$ span a wide range of values with no particular clustering.

\subsection{Projected IRLS via priorconditioned generalized Krylov subspaces}
\label{sub:gks_main}

We now introduce our new PS-GKS method, which incorporates priorconditioned generalized Krylov subspaces into the projected IRLS scheme in \cref{eq:projected_IRLS}. 
The main idea is to replace \cref{eq:projected_IRLS} with 
\begin{equation}\label{eq:priorconditioned_projected_IRLS}
    \bx_{\itidx+1} = \mathbf{x}_{\text{ker}} + (\boldsymbol{\Psi}_{\itidx+1})_{\mathbf{A}}^\dagger \left(  \argmin_{\bz \in \mathcal{V}_\itidx} \left\{ 
        \norm{ \overline{\bA}_{\itidx+1} \bz - \bb }_2^2 
        + \regparam_{\itidx+1} \norm{  \bz }_2^2
    \right\} \right), 
\end{equation}
where $\{  \mathcal{V}_{\itidx} \}_{\itidx \geq 0}$ is a nested sequence of low-dimensional subspaces. A key distinction of our proposed PS-GKS from existing S-GKS methods is that we define the subspace in the $K$-dimensional space of ``increments'' rather than in the $N$-dimensional native space of $\bx$.
Consequently, we refer to these subspaces as \emph{priorconditioned generalized Krylov subspaces}.

Our proposed PS-GKS method operates as follows: 
We begin by selecting an initialization $\mathbf{x}_0 \in \mathbb{R}^{N}$. 
We then compute the associated weight matrix $\mathbf{W}_0$ and weighted sparsifying transformation $\boldsymbol{\Psi}_0 = \mathbf{W}_0 \boldsymbol{\Psi}$.
Next, we determine the contribution to the solution from $\ker(\boldsymbol{\Psi})$ as $\mathbf{x}_{\text{ker}} = \mathbf{K}(\mathbf{A} \mathbf{K})^\dagger \mathbf{b}$ and define the pseudoinverses $\boldsymbol{\Psi}_0^\dagger$ and  $(\boldsymbol{\Psi}_0)_{\mathbf{A}}^\dagger$. 
Letting  $\overline{\mathbf{A}}_\itidx \coloneqq \mathbf{A} (\boldsymbol{\Psi}_{\itidx})_{\mathbf{A}}^\dagger$, we generate an initial subspace $\mathcal{V}_0 = \mathcal{K}_{h}( \overline{\mathbf{A}}_0^T \overline{\mathbf{A}}_0, \overline{\mathbf{A}}_0^T \overline{\mathbf{b}}) \subset \mathbb{R}^K$ for a relatively small $h$, e.g., $h = 5$. 
(See \cref{rem:incorporating_x0_priorconditioned} for more details on choosing the initial subspace.).

At the $(\itidx+1)$th iteration of the PS-GKS method, we begin by computing the new weight matrix $\mathbf{W}_{\itidx+1}$ and the weighted sparsifying transformation $\boldsymbol{\Psi}_{\itidx+1} = \mathbf{W}_{\itidx+1} \boldsymbol{\Psi}$---as in the S-GKS approach. 
We then project the stand form IRLS subproblem on the right-hand side of  \cref{eq:priorconditioning_transformation} onto the lower-dimensional subspace $\mathcal{V}_\itidx$ to obtain the projected problem 
\begin{align}\label{eq:priorconditioned_projected_problem}
    \argmin_{\mathbf{z} \in \mathcal{V}_\itidx} \left\{ \| \overline{\mathbf{A}}_{\itidx+1} \mathbf{z} - \overline{\mathbf{b}} \|_2^2 + \regparam_{\itidx+1} \| \mathbf{z} \|_2^2 \right\}.
\end{align}

To solve \cref{eq:priorconditioned_projected_problem}, we insert the economic QR factorization $\overline{\mathbf{A}}_{\itidx+1} \mathbf{V}_\itidx = \mathbf{Q}_{\overline{\mathbf{A}}} \mathbf{R}_{\overline{\mathbf{A}}}$, yielding 
\begin{align}\label{eq:qr_factored_priorconditioned_projected_problem}
    \argmin_{\mathbf{u} \in \mathbb{R}^{D_\itidx} } \left\{ \| \mathbf{R}_{\overline{\mathbf{A}}} \mathbf{u} - \mathbf{Q}_{\overline{\mathbf{A}}}^T \overline{\mathbf{b}} \|_2^2 + \regparam_{\itidx+1} \| \mathbf{u} \|_2^2  \right\}.
\end{align}
After selecting $\regparam_{\itidx+1}$ using a regularization parameter selection method (see \Cref{sub:dp_priorconditioned_projected}), we recover the full-scale solution to the transformed problem as $\mathbf{z}_{\itidx+1} = \mathbf{V}_{\itidx} \mathbf{u}_{\itidx+1}$. 
Furthermore, we get an estimate of the solution to the original problem as $\mathbf{x}_{\itidx+1} = (\boldsymbol{\Psi}_{\itidx+1})_{\mathbf{A}}^\dagger \mathbf{z}_{\itidx+1} + \mathbf{x}_{\text{ker}}$. 
The remaining steps in the iteration follow that of the S-GKS method. 
The main difference is that the subspace enlargement is performed in the transformed space. 
Specifically, the residual vector is $\mathbf{r}_{\itidx+1} = \overline{\mathbf{A}}_{\itidx+1}^T ( \overline{\mathbf{A}}_{\itidx+1} \mathbf{V}_{\itidx} \mathbf{z}_{\itidx+1} - \overline{\mathbf{b}}) + \regparam_{\itidx+1} \mathbf{V}_{\itidx} \mathbf{z}_{\itidx+1}$ for the proposed PS-GKS method. 
Finally, the PS-GKS iteration is repeated until some convergence criterion is satisfied. 
\cref{alg:ps-gks} summarizes the proposed PS-GKS algorithm. 

\begin{algorithm}[h]
\caption{The PS-GKS method}%
\label{alg:ps-gks}
\begin{algorithmic}[1]
	\Require{$\bA, \boldsymbol{\Psi}, \mathbf{b}, \bx_0, \mathbf{K}$} 
    \Ensure{An approximate solution $\bx_{\itidx+1}$}
	\Function{$\bx_{\itidx+1} = $ PS-GKS }{$\bA, \boldsymbol{\Psi}, \mathbf{b}, \bx_{0}, \mathbf{K}$} \;
        \noindent \State $\mathbf{A} \mathbf{K} = \mathbf{Q}_{\text{ker}} \mathbf{R}_{\text{ker}}$ and $(\mathbf{A} \mathbf{K})^\dagger = \mathbf{R}_{\text{ker}}^{-1} \mathbf{Q}_{\text{ker}}^T$\;\Comment{$(\mathbf{A} \mathbf{K})^\dagger$ via economic QR}
        \State $\mathbf{x}_{\text{ker}} = \mathbf{K} \mathbf{R}_{\text{ker}}^{-1} \mathbf{Q}_{\text{ker}}^T \mathbf{b}$ and $\overline{\mathbf{b}} = \mathbf{b} - \mathbf{A} \mathbf{x}_{\text{ker}}$\;\Comment{Fixed component in $\ker(\boldsymbol{\Psi})$}
	  \State Generate the initial subspace basis $\bV_{0}\in \R^{K \times D_0}$ such that $\bV_{0}^{T}\bV_{0}=\bI_{D_0}$\;
     \FOR {$\itidx=0,1,2,\ldots$ \text{until convergence}}{
\State Update weights $\mathbf{W}_{\itidx+1} = \operatorname{diag}(\bw_{\itidx+1})$ and $\boldsymbol{\Psi}_{\itidx+1} = \mathbf{W}_{\itidx+1} \boldsymbol{\Psi}$ given $\boldsymbol{\Psi} \mathbf{x}_\itidx$
\State Build operators for  $\boldsymbol{\Psi}_{\itidx+1}^\dagger$, $( \boldsymbol{\Psi}_{\itidx+1})_{\mathbf{A}}^\dagger = ( \mathbf{I}_N - \mathbf{K}(\mathbf{A} \mathbf{K})^\dagger \mathbf{A} ) \boldsymbol{\Psi}_{\itidx+1}^\dagger$, and $\overline{\mathbf{A}}_{\itidx+1} = \mathbf{A} ( \boldsymbol{\Psi}_{\itidx+1})_{\mathbf{A}}^\dagger$\;
\State	$\overline{\mathbf{A}}_{\itidx+1} \mathbf{V}_{\itidx} = \mathbf{Q}_{\overline{\mathbf{A}}} \mathbf{R}_{\overline{\mathbf{A}}}$\;\Comment{Compute economic QR}
\State Select $\regparam_{\itidx+1}$ by a heuristic (e.g., DP) on \cref{eq:qr_factored_priorconditioned_projected_problem}\;\Comment{Regularization parameter selection} 
\State $\mathbf{u}_{\itidx+1}$ to satisfy \cref{eq:qr_factored_priorconditioned_projected_problem} with selected $\regparam_{\itidx+1}$\;\Comment{Solve projected problem}
\State $\bx_{\itidx+1} =  (\boldsymbol{\Psi}_{\itidx+1})_{\mathbf{A}}^\dagger  \bV_{\itidx}\mathbf{u}_{\itidx+1} + \mathbf{x}_{\text{ker}}$\;\Comment{Full-scale solution via projection}
\State $\br_{\itidx+1}=\overline{\mathbf{A}}_{\itidx+1}^T( \overline{\mathbf{A}}_{\itidx+1} \bV_{\itidx} \mathbf{u}_{\itidx+1} - \overline{\mathbf{b}} )+\regparam_{\itidx+1} \mathbf{V}_{\itidx} \mathbf{u}_{\itidx+1}$\;\Comment{Full-scale residual}
\State		 $\br_{\itidx+1} = \br_{\itidx+1} - \bV_{\itidx}\bV_{\itidx}^{T}\br_{\itidx+1}$\; \Comment{Reorthogonalize (optional)}
\State		 $\bv_{\rm new} = \frac{\br_{\itidx+1}}{\|\br_{\itidx+1}\|_{2}}$: $\bV_{\itidx+1}=[\bV_{\itidx}, \bv_{\rm new}]$\;\Comment{Enlarge the solution subspace}}
\ENDFOR
\EndFunction
\end{algorithmic}
\end{algorithm}

\begin{remark}[Computational costs of PS-GKS]\label{rem:PSGKS_costs}
    Here, we compare the computational costs of existing S-GKS strategies and our proposed PS-GKS methods. 
    For the S-GKS approach discussed in \Cref{sub:sgks}, obtaining the economic QR factorization of $\mathbf{A} \mathbf{V}_\itidx$, which is required to formulate \cref{eq:l2_gks_qr_factored_problem}, requires a single matvec with $\mathbf{A}$ and $\mathcal{O}( D_\itidx M )$ additional flops. 
    At the same time, building the QR factorization of $\boldsymbol{\Psi}_{\itidx+1} \mathbf{V}_\itidx$ requires $D_\itidx$ matvecs with $\boldsymbol{\Psi}$ and $\mathcal{O}( D_\itidx^2 K)$ additional flops---we have an additional factor $D_\itidx$ because $\boldsymbol{\Psi}_{\itidx+1}$ changes at each iteration.\footnote{We assume $M, K \sim \mathcal{O}(N)$. 
    The matrix $\mathbf{A} \mathbf{V}_\itidx$ may be obtained using $\mathbf{A} \mathbf{V}_{\itidx-1}$ and a single matvec with $\mathbf{A}$. Similarly, the economic QR factorization of $\mathbf{A} \mathbf{V}_{\itidx}$ can be obtained efficiently as column update of $\mathbf{A} \mathbf{V}_{\itidx-1}$.} 
    At the same time, the computational cost per PS-GKS iteration is $\mathcal{O}( D_\itidx^2 \operatorname{max}(M, K) )$ flops---ignoring the cost of matvecs with $\mathbf{A}$ and $\boldsymbol{\Psi}_{\itidx}^\dagger$. 
    Furthermore, the memory requirement to perform $\itidx$ iterations of PS-GKS is the storage of $\mathcal{O}( D_\itidx \operatorname{max}(M,K) )$ floating point numbers. 
    Each iteration of PS-GKS requires $\mathcal{O}(D_\itidx)$ matvecs with both  $\mathbf{A}/\mathbf{A}^T$ and  $\boldsymbol{\Psi}_{\itidx}^\dagger/(\boldsymbol{\Psi}_{\itidx}^\dagger)^T$. 
\end{remark}

\begin{remark}[Incorporating $\mathbf{x}_0$ into $\mathcal{V}_0$]\label{rem:incorporating_x0_priorconditioned} 
    Although the standard initial Krylov subspace $\mathcal{K}_{h}( \overline{\mathbf{A}}_0^T \overline{\mathbf{A}}_0^T, \overline{\mathbf{A}}_0^T \overline{\mathbf{b}})$ incorporates information about $\mathbf{w}_0$, it may neglect further available information contained in $\mathbf{x}_0$. 
    To incorporate this information, we compute the vector $\mathbf{z}_0 = \boldsymbol{\Psi}_0 \mathbf{x}_0$ and take our initial subspace to be $\mathcal{V}_0 = \mathcal{K}_{\ell}( \overline{\mathbf{A}}_0^T \overline{\mathbf{A}}_0, \overline{\mathbf{A}}_0^T \overline{\mathbf{b}}) \cup \operatorname{span}\{ \mathbf{z}_0 \}$. 
    It is straightforward to obtain an associated matrix $\mathbf{V}_0$ whose columns form an orthonormal basis for $\mathcal{V}_0$.
\end{remark}

\subsection{Regularization parameter selection}\label{sub:dp_priorconditioned_projected} 

There are various approaches for selecting regularization parameters \cite{vogel2002computational,hansen2010discrete}, including the discrepancy principle (DP)
\cite{morozov1984methods,reichel2008new}, generalized cross validation (GCV) \cite{golub1997generalized}, and the L-curve \cite{lawson1995solving}. 
We now provide a few details on how the DP can be used to select $\regparam_{\ell+1}$ in the priorconditioned projected problem \cref{eq:qr_factored_priorconditioned_projected_problem}. 
Since $\mathbf{x}_{\ell+1} = (\boldsymbol{\Psi}_{\ell+1})_{\mathbf{A}}^\dagger \mathbf{V}_{\itidx} \mathbf{u}_{\itidx+1} + \mathbf{x}_{\text{ker}}$ and $\mathbf{A} (\boldsymbol{\Psi}_{\itidx+1})_{\mathbf{A}}^\dagger \mathbf{V}_{\itidx} = \mathbf{Q}_{\overline{\mathbf{A}}} \mathbf{R}_{\overline{\mathbf{A}}}$, the DP rule for \cref{eq:qr_factored_priorconditioned_projected_problem} is to select $\regparam_{\itidx+1}$ as the root of
\begin{subequations}\label{eq:priorconditioned_varphi}
\begin{align}
    \varphi(\regparam) &= \| \mathbf{A}( (\boldsymbol{\Psi}_{\itidx+1})_{\mathbf{A}}^\dagger \mathbf{V}_{\itidx} \mathbf{u}_{\itidx+1}^{(\regparam)} + \mathbf{x}_{\text{ker}} )  - \mathbf{b} \|_2^2 - \tau^2 \| \mathbf{e} \|_2^2 \\
    &= \| \mathbf{A} (\boldsymbol{\Psi}_{\itidx+1})_{\mathbf{A}}^\dagger \mathbf{V}_{\itidx} \mathbf{u}_{\itidx+1}^{(\regparam)}  - \overline{\mathbf{b}} \|_2^2 - \tau^2 \| \mathbf{e} \|_2^2 \\
    &= \| \mathbf{R}_{\overline{\mathbf{A}}} \mathbf{u}^{(\regparam)}_{\itidx+1} - \mathbf{Q}_{\overline{\mathbf{A}}}^T  \overline{\mathbf{b}} \|_2^2 + \|(\mathbf{I}_M - \mathbf{Q}_{\overline{\mathbf{A}}} \mathbf{Q}_{\overline{\mathbf{A}}}^T ) \overline{\mathbf{b}} \|_2^2 - \tau^2 \| \mathbf{e} \|_2^2,
\end{align}
\end{subequations}
where $\mathbf{u}^{(\regparam)}_{\itidx+1}$ denotes the solution to \cref{eq:qr_factored_priorconditioned_projected_problem}
for fixed $\regparam$. 
Note that an estimate of $\| \mathbf{e} \|_2$ may not be available in practice.
Hence, we adopt the approximation $\| \mathbf{e} \|_2 \approx \sqrt{M}$. 
With the change of variables $\beta = \regparam^{-1}$, it is possible to find a condition that guarantees the existence and uniqueness of a root of $\psi(\beta) \coloneqq \varphi(\beta^{-1})$ and that Newton's method initialized to the left of the root (e.g., $\beta_0 = 0$) converges. 
Such results have been given in \cite{calvetti2003tikhonov,reichel2008new,hochstenbach2010iterative,gazzola2020krylov}. \cref{thm:root_theorem} below is derived from these existing results. 
While the results in the existing literature typically assume $\boldsymbol{\Psi} = \mathbf{I}$, \cref{thm:root_theorem} applies to any $\mathbf{A}$ and $\boldsymbol{\Psi}$ with $\ker(\mathbf{A}) \cap \ker(\boldsymbol{\Psi}) = \{ \mathbf{0}\}$.

\begin{theorem}\label{thm:root_theorem}
    Let $\mathbf{x}_{\beta}$ denote the solution to 
    \begin{align}\label{eq:root_thm_least_squares}
        \argmin_{\mathbf{x} \in \mathbb{R}^N} \,\, \| \mathbf{A} \mathbf{x} - \mathbf{b} \|_2^2 + \beta^{-1} \| \boldsymbol{\Psi} \mathbf{x} \|_2^2.
    \end{align}
    Then, $\psi(\beta) \coloneqq \| \mathbf{A} \mathbf{x}_{\beta} - \mathbf{b}  \|_2^2 - z$ is strictly decreasing and convex, and has a unique positive root so long as $\| (\mathbf{I}_M - \mathbf{A} \mathbf{A}^\dagger) \mathbf{b} \|_2^2 \leq z \leq \| (\mathbf{I}_M - \mathbf{A} \mathbf{K} (\mathbf{A} \mathbf{K})^\dagger) \mathbf{b} \|_2^2$.
\end{theorem}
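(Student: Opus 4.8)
The plan is to reduce the general penalized problem to standard form and then reduce the analysis to a single scalar function studied via the singular value decomposition. First I would invoke the standard form transformation of \cref{eq:priorconditioning_transformation} with $\mathbf{W} = \mathbf{I}$: writing $\overline{\mathbf{A}} = \mathbf{A}(\boldsymbol{\Psi})_{\mathbf{A}}^\dagger$, $\overline{\mathbf{b}} = \mathbf{b} - \mathbf{A}\mathbf{x}_{\text{ker}}$, and $\mathbf{x}_{\text{ker}} = \mathbf{K}(\mathbf{A}\mathbf{K})^\dagger\mathbf{b}$, the minimizer of \cref{eq:root_thm_least_squares} satisfies $\mathbf{x}_{\beta} = \mathbf{x}_{\text{ker}} + (\boldsymbol{\Psi})_{\mathbf{A}}^\dagger \mathbf{z}_{\beta}$, where $\mathbf{z}_{\beta}$ solves the standard-form problem $\min_{\mathbf{z}} \|\overline{\mathbf{A}}\mathbf{z} - \overline{\mathbf{b}}\|_2^2 + \beta^{-1}\|\mathbf{z}\|_2^2$. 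This yields the exact residual identity $\mathbf{A}\mathbf{x}_{\beta} - \mathbf{b} = \overline{\mathbf{A}}\mathbf{z}_{\beta} - \overline{\mathbf{b}}$, so that $\psi(\beta) = \|\overline{\mathbf{A}}\mathbf{z}_{\beta} - \overline{\mathbf{b}}\|_2^2 - z$ and it suffices to analyze the standard-form residual. The hypothesis $\ker(\mathbf{A}) \cap \ker(\boldsymbol{\Psi}) = \{\mathbf{0}\}$ guarantees that $\mathbf{x}_{\beta}$ is well defined and unique for every $\beta > 0$.

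Next I would insert the SVD $\overline{\mathbf{A}} = \mathbf{U}\boldsymbol{\Sigma}\mathbf{V}^T$, with singular values $\sigma_1 \geq \cdots \geq \sigma_r > 0$ and coefficients $b_i = \mathbf{u}_i^T \overline{\mathbf{b}}$. A direct computation with the Tikhonov filter factors gives the closed form
\begin{align}\label{eq:sketch_gbeta}
    g(\beta) \coloneqq \|\overline{\mathbf{A}}\mathbf{z}_{\beta} - \overline{\mathbf{b}}\|_2^2 = \sum_{i=1}^{r} \frac{b_i^2}{(\sigma_i^2 \beta + 1)^2} + \sum_{i > r} b_i^2 .
\end{align}
Differentiating term by term yields $g'(\beta) = -2\sum_{i=1}^{r} b_i^2 \sigma_i^2/(\sigma_i^2\beta + 1)^3 < 0$ and $g''(\beta) = 6\sum_{i=1}^{r} b_i^2 \sigma_i^4/(\sigma_i^2\beta + 1)^4 \geq 0$, so $\psi = g - z$ is strictly decreasing and convex whenever $\overline{\mathbf{b}}$ has a nonzero component in $\operatorname{range}(\overline{\mathbf{A}})$.

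It then remains to identify the two boundary values. As $\beta \to 0^+$ the penalty dominates and $\mathbf{x}_{\beta}$ converges to the constrained minimizer over $\ker(\boldsymbol{\Psi})$, namely $\mathbf{x}_{\text{ker}}$; equivalently, \cref{eq:sketch_gbeta} gives $g(0^+) = \|\overline{\mathbf{b}}\|_2^2 = \|(\mathbf{I}_M - \mathbf{A}\mathbf{K}(\mathbf{A}\mathbf{K})^\dagger)\mathbf{b}\|_2^2$, the upper bound. As $\beta \to \infty$ the penalty vanishes and $g(\infty) = \sum_{i>r} b_i^2 = \|(\mathbf{I}_M - \overline{\mathbf{A}}\,\overline{\mathbf{A}}^\dagger)\overline{\mathbf{b}}\|_2^2$; since the standard-form transformation is an exact equivalence of the underlying least-squares problems, this equals $\|(\mathbf{I}_M - \mathbf{A}\mathbf{A}^\dagger)\mathbf{b}\|_2^2$, the lower bound. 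Because $g$ decreases strictly and continuously from the upper to the lower value, the intermediate value theorem furnishes a unique positive root of $\psi$ for every $z$ strictly between these two values, with the endpoints recovered as limiting roots at $\beta \to 0^+$ and $\beta \to \infty$; this is exactly the claim.

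I expect the main obstacle to be the $\beta \to \infty$ endpoint, i.e.\ showing $\|(\mathbf{I}_M - \overline{\mathbf{A}}\,\overline{\mathbf{A}}^\dagger)\overline{\mathbf{b}}\|_2^2 = \|(\mathbf{I}_M - \mathbf{A}\mathbf{A}^\dagger)\mathbf{b}\|_2^2$. This requires identifying $\operatorname{range}(\overline{\mathbf{A}})$, where $\overline{\mathbf{A}} = \mathbf{A}\mathbf{E}\boldsymbol{\Psi}^\dagger$ with $\mathbf{E} = \mathbf{I}_N - \mathbf{K}(\mathbf{A}\mathbf{K})^\dagger\mathbf{A}$, and it is precisely here that the oblique-pseudoinverse structure and the hypothesis $\ker(\mathbf{A}) \cap \ker(\boldsymbol{\Psi}) = \{\mathbf{0}\}$ enter, since they ensure the minimal least-squares residual is preserved under the transformation. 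Rather than computing this projection directly, I would lean on the established equivalence of the standard-form transformation together with the cited root-existence results to transfer the endpoint residuals, keeping the argument clean.
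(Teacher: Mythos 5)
Your proposal is correct and is essentially the paper's own route: the paper omits the proof, deferring to the monotonicity/convexity argument of \cite[Theorem 2.1]{calvetti2003tikhonov}, and your combination of the standard-form reduction \cref{eq:priorconditioning_transformation} with the SVD filter-factor computation (giving $g' < 0$, $g'' \geq 0$, and the two limiting residual values) is precisely that argument adapted to general $\boldsymbol{\Psi}$ under the hypothesis $\ker(\mathbf{A}) \cap \ker(\boldsymbol{\Psi}) = \{\mathbf{0}\}$. The endpoint identification you single out as the main obstacle does close cleanly---either by your limiting argument in $\mu = \beta^{-1}$, or directly by noting $\mathbf{A}\mathbf{E} = (\mathbf{I}_M - \mathbf{A}\mathbf{K}(\mathbf{A}\mathbf{K})^\dagger)\mathbf{A}$, so that writing $\mathbf{x} = \boldsymbol{\Psi}^\dagger \mathbf{z} + \mathbf{K}\mathbf{c}$ and minimizing $\| \mathbf{A}\mathbf{x} - \mathbf{b} \|_2^2$ over $\mathbf{c}$ first gives $\min_{\mathbf{z}} \| \overline{\mathbf{A}}\mathbf{z} - \overline{\mathbf{b}} \|_2^2 = \min_{\mathbf{x}} \| \mathbf{A}\mathbf{x} - \mathbf{b} \|_2^2 = \| (\mathbf{I}_M - \mathbf{A}\mathbf{A}^\dagger)\mathbf{b} \|_2^2$---so there is no gap.
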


\begin{proof}
    The proof is similar to that of \cite[Theorem 2.1]{calvetti2003tikhonov} and is thus omitted.
\end{proof}

Although \cref{thm:root_theorem} is expressed for the full-scale problem without priorconditioning, it is straightforward to derive an analogous result for the priorconditioned projected problem \cref{eq:qr_factored_priorconditioned_projected_problem} by making the substitutions $\mathbf{A} \leftarrow \mathbf{R}_{\overline{\mathbf{A}}}$, $\mathbf{b} \leftarrow \mathbf{Q}_{\overline{\mathbf{A}}}^T \overline{\mathbf{b}}$, $\boldsymbol{\Psi} \leftarrow \mathbf{I}_{D_\itidx}$, and $z \leftarrow \tau^2 \| \mathbf{e} \|_2^2 - \| (\mathbf{I} - \mathbf{Q}_{\overline{\mathbf{A}}} \mathbf{Q}_{\overline{\mathbf{A}}}^T) \overline{\mathbf{b}} \|_2^2$. 
This yields the condition $\| (\mathbf{I}_{D_{\itidx}} - \mathbf{R}_{\overline{\mathbf{A}}} \mathbf{R}_{\overline{\mathbf{A}}}^\dagger) \mathbf{Q}_{\overline{\mathbf{A}}}^T \overline{\mathbf{b}} \|_2^2 \leq \tau^2 \| \mathbf{e} \|_2^2 - \| (\mathbf{I}_M - \mathbf{Q}_{\overline{\mathbf{A}}} \mathbf{Q}_{\overline{\mathbf{A}}}^T) \overline{\mathbf{b}}  \|_2^2 \leq \| \overline{\mathbf{b}}  \|_2^2$
for guaranteeing the existence and uniqueness of the root, which can be efficiently found using a third-order root finder \cite{reichel2008new}.

\subsection{Restarting and recycling PS-GKS}\label{sub:restarting_recycling_psgks} 

As discussed in \cref{rem:restarting}, the storage requirements of performing many iterations of S-GKS or PS-GKS can easily exceed the memory capacity in some applications. 
Furthermore, for PS-GKS, we have to perform $\mathcal{O}(D_\itidx)$ additional matvecs with $\mathbf{A}$ and $(\boldsymbol{\Psi}_{\itidx+1})_{\mathbf{A}}^\dagger$ at each iteration, which can increase the computational costs for large $\itidx$. For these reasons, we proposed to combine the PS-GKS method with restarting/recycling strategies---similar to those described in \cref{rem:restarting} for existing S-GKS methods. The resulting restarted PS-GKS (resPS-GKS) method has memory requirements and computational costs of $\mathcal{O}(  D_\itidx K  )$ and $\mathcal{O}( D_\itidx^2 M )$ flops per iteration, respectively, where $D_{\itidx} = 1 + \itidx \mod D_{\text{max}}$. 

At the same time, the resulting recycling PS-GKS (recPS-GKS) method has memory requirements and computational costs of $\mathcal{O}(  D_\itidx K )$ and $\mathcal{O}( D_\itidx^2 M)$ flops per iteration, respectively, where $D_\itidx = D_{\text{min}} + \itidx  \mod (D_{\text{max}}  +1)$.

Several strategies  may be used to perform the basis compression step in recycled PS-GKS. 
In our implementation, we consider a truncated SVD (tSVD) method based on the matrix $\mathbf{H}_{\itidx+1} = [ \mathbf{R}_{\overline{\mathbf{A}}}^T, \regparam_{\itidx+1}^{1/2} \mathbf{I}_{D_\itidx} ]^T$, which is inspired by \cite{jiang2021hybrid, pasha2023recycling}.  
Suppose we want to compress the basis $\mathbf{V}_{\itidx} \in \mathbb{R}^{K \times D_{\text{max}}}$. 
We begin by computing the rank-($D_{\text{min}}-1)$ truncated SVD  $\mathbf{H}_{\itidx+1} \approx \hat{ \mathbf{U} } \hat{ \mathbf{S} }  \hat{ \mathbf{W} }^T$ with $\hat{\mathbf{U}} \in \mathbb{R}^{2 D_{\text{max}} \times D_{\text{min}}-1 }$, $\hat{\mathbf{S}} \in \mathbb{R}^{D_{\text{min}} - 1 \times D_{\text{min}}  - 1 }$, and $\hat{\mathbf{W}} \in \mathbb{R}^{D_{\text{max}} \times D_{\text{min}}  - 1 }$. 
Next, we compute a new basis matrix $\tilde{\mathbf{V}} = \mathbf{V}_{\itidx} \hat{\mathbf{W}} \in \mathbb{R}^{K \times D_{\text{min}} -1}$. 

We then form $\tilde{\mathbf{z}} = ( \mathbf{z}_{\itidx+1} - \tilde{\mathbf{V}} \tilde{\mathbf{V}}^T \mathbf{z}_{\itidx+1} ) / \| \mathbf{z}_{\itidx+1} - \tilde{\mathbf{V}} \tilde{\mathbf{V}}^T \mathbf{z}_{\itidx+1} \|_2$ and replace the basis with $\mathbf{V}_{\itidx} = [\tilde{\mathbf{V}} \,\, \tilde{\mathbf{z}}] \in \mathbb{R}^{K \times D_{\text{min}}}$. Overall, the above compression routine costs $\mathcal{O}( D_{\text{max}}^3 +  D_{\text{max}}^2 K )$ flops, making it inexpensive compared to the computational cost of a single iteration of PS-GKS when the basis is large.


\section{Computational examples}
\label{sec:numerics}

Next, we examine the performance of the PS-GKS method in two reconstruction tasks. 
For each task, we consider the results obtained using both the MM (with $p = 1$) and IAS weights (with $r = -1, \beta = 1$ and $r = 1/2$, $\beta = 3.01$, corresponding to an approximation of  $\ell_{2/3}$-regularization).
\begin{enumerate}
\item In \Cref{sub:numerics_1d_cosine}, we revisit the 1D undersampled DCT problem in \Cref{sub:illustrative_example}. We provide a thorough comparison with other hybrid methods in the literature, showing the PS-GKS outperforms other methods for this problem. 
\item In \Cref{sub:numerics_tomography}, we apply PS-GKS to an ill-posed 2D tomography problem where sparsity is enforced in the anisotropic 2D gradient. In this experiment we only compare to a subset of existing methods, since most methods \emph{cannot} be applied here due to their limitation of requiring $\boldsymbol{\Psi}$ being invertible. We find that PS-GKS outperforms all other considered methods with either MM or IAS weights. 
\end{enumerate}

Unless otherwise specified, all comparisons to other methods using MM weights use the same MM weight formulations presented in the relevant literature (see \Cref{app:mm_weights}). 
A summary of all methods compared is given in \cref{tab:methods_summary}. 
We standardize the choice of regularization parameter selection method across all methods to be the DP rule with $\tau = 1.01$ (see \cref{sub:dp_priorconditioned_projected}) -- for the regularization parameter we set an upper bound $\mu_{\text{max}} = 10^{7}$, as well as a lower bound $\mu_{\text{min}} = 10^{-7}$ which is defaulted to should a root to the DP root-finding problem fail to be found. In our experiments, we assess the quality of the reconstructions using the relative residual error (RRE) and structural similarity index (SSIM) measures \cite{wang2004image}. 
We also assess the degree of sparsity of $\boldsymbol{\Psi} \mathbf{x}$ using the Gini index \cite{zonoobi2011gini}, which is a normalized measure of sparsity (in the interval $[0,1]$) with higher values indicating greater sparsity. 
For each method, we denote by $n_{\mathbf{A}}$, $n_{\boldsymbol{\Psi}}$, and $n_{\boldsymbol{\Psi}^\dagger}$ the total number of matvecs (and transpose matvecs) required with  $\mathbf{A}$, $\boldsymbol{\Psi}$, and $\boldsymbol{\Psi}_{\itidx+1}^\dagger$, respectively.
We refer to the ratio $\sigma_{\rm NL}=\|\be\|_{2}/\|\bA\bx\|_{2}$ as the noise level. 
\begin{table}[ht]
\centering
\scalebox{0.8}{
\begin{tabular}{|l|l|l|l|l|}
\hline
\textbf{Method} & \textbf{Description} & \textbf{Req. $\boldsymbol{\Psi}^{-1}$?} & \textbf{Weights}  & \textbf{Ref.}  \\ \hline
\textbf{GKS} &  \Cref{sec:sgks_algo} & No  & ---  & \cite{lampe2012large}  \\ \hline
\textbf{S-GKS} &  \Cref{sec:sgks_algo} & No & MM$_2$ & \cite{lanza2015generalized, huang2017majorization}  \\ \hline
\textbf{resS-GKS} & \Cref{sec:sgks_algo}, see \Cref{rem:restarting}    & No & MM$_1$ &  \cite{buccini2023limited}  \\ \hline
\textbf{recS-GKS} & \Cref{sec:sgks_algo}, see \Cref{rem:restarting}   &  No  & MM$_2$ &   \cite{pasha2023recycling}   \\ \hline
\textbf{PS-GKS} & \Cref{alg:ps-gks}  &  No  & MM$_3$ & Here  \\ \hline
\textbf{resPS-GKS} & \Cref{alg:ps-gks}, see \Cref{sub:restarting_recycling_psgks}  &  No  & MM$_3$  & Here    \\  \hline
\textbf{recPS-GKS} & \Cref{alg:ps-gks}, see \Cref{sub:restarting_recycling_psgks}  & No  & MM$_3$ &  Here   \\  \hline
\textbf{PS-GKB} &  \Cref{alg:psgkb}, see \Cref{ssub:gkb}  & No & MM$_3$ & Here \& \cite{gazzola2021flexible} \\ \hline
\textbf{FGK} & See  \Cref{ssub:fgk}  & No &  MM$_4$  & \cite{gazzola2021flexible}   \\ \hline
\textbf{FLSQR-I} & See \cite{chung2019flexible}   & Yes &  MM$_5$  & \cite{chung2019flexible} \\ \hline
\textbf{FLSQR-R} &  See \cite{chung2019flexible}  & Yes & MM$_5$  & \cite{chung2019flexible}   \\ \hline
\textbf{FLSQR-W} &  See \Cref{rem:golub_kahan_approaches}   & Yes & MM$_5$    & Here  \\ \hline
\textbf{IRW-FLSQR} & See \cite{gazzola2021iteratively}  &  Yes   &  MM$_3$  & \cite{gazzola2021iteratively}   \\ \hline
\end{tabular}}
\caption{Summary of the reconstruction methods considered in the numerical experiments.}
\label{tab:methods_summary}
\end{table}

\begin{remark}[Alternative Golub-Kahan approaches]\label{rem:golub_kahan_approaches} The S-GKS and PS-GKS methods both utilize GKS as the subspaces. 
Alternative Golub--Kahan approaches utilizing partial Golub--Kahan bidiagonalizations or flexible Golub--Kahan decompositions have also been proposed \cite{chung2019flexible,  gazzola2021iteratively,  gazzola2021flexible}. 
When $\boldsymbol{\Psi}^{-1}$ exists, we compare our results with those obtained from the FLSQR-I, FLSQR-R, and IRW-FLSQR methods. We also define an additional flexible method FLSQR-W which is the same as FLSQR-R except where the projected problems solved are regularized by $\regparam_{\itidx+1} \| \boldsymbol{\Psi}_{\itidx+1}  \mathbf{x}  \|_2^2$ instead of $\regparam_{\itidx+1} \|  \boldsymbol{\Psi} \mathbf{x}  \|_2^2$. When $\boldsymbol{\Psi}^{-1}$ does not exist, we also consider two additional methods (PS-GKB and FGK \cite{gazzola2021flexible}) which are further discussed in Appendix \ref{ssec:psgkb_and_fgk}.
\end{remark}

\subsection{Test 1: 1D undersampled cosine transform}\label{sub:numerics_1d_cosine}

We revisit the 1D numerical example considered earlier in \cref{sub:illustrative_example} to compare existing methods with our PS-GKS method. Results for all methods considered are shown in \cref{tab:1d_cosine_performance_results}, and records of the RRE, SSIM, and Gini index are shown for selected methods in \cref{fig:1d_cosine_performance}. 
The stopping criteria were set to perform attempt a maximum of 150 iterations for all methods.

\begin{table}[ht]
\centering
\scalebox{0.8}{
\begin{tabular}{llcrrrccrrr}
\toprule
Weights & Method & $\mu$ & $n_{\textrm{iter}}$ & RRE & SSIM & Gini index & $\kappa$ & $n_{\mathbf{A}}$ & $n_{\boldsymbol{\Psi}}$ & $n_{\boldsymbol{ \Psi }^{-1} }$ \\
\midrule
--- & GKS & $2\cdot10^{2}$ & $150$ & $0.112$ & $0.914$ & $0.669$ & $1\cdot10^{1}$ & $315$ & $455$ & $0$ \\
\hline
MM & S-GKS & $3\cdot10^{1}$ & $150$ & $0.076$ & $0.962$ & $0.862$ & $4\cdot10^{1}$ & $315$ & $456$ & $0$ \\
MM & resS-GKS & $2\cdot10^{2}$ & $150$ & $0.112$ & $0.914$ & $0.670$ & $1\cdot10^{1}$ & $318$ & $459$ & $0$ \\
MM & recS-GKS & $3\cdot10^{1}$ & $150$ & $0.092$ & $0.948$ & $0.855$ & $4\cdot10^{1}$ & $525$ & $666$ & $0$ \\
MM & PS-GKS & $5\cdot10^{1}$ & $150$ & $0.059$ & $0.973$ & $0.930$ & $3\cdot10^{1}$ & $12234$ & $1$ & $12235$ \\
MM & resPS-GKS & $5\cdot10^{1}$ & $150$ & $0.059$ & $0.973$ & $0.930$ & $3\cdot10^{1}$ & $3227$ & $153$ & $3378$ \\
MM & recPS-GKS & $5\cdot10^{1}$ & $150$ & $0.059$ & $0.973$ & $0.930$ & $3\cdot10^{1}$ & $3018$ & $1$ & $3019$ \\
MM & PS-GKB & $5\cdot10^{1}$ & $150$ & $0.059$ & $0.973$ & $0.930$ & $3\cdot10^{1}$ & $22651$ & $0$ & $22652$ \\
MM & FGK & $4\cdot10^{1}$ & $49$ & $0.086$ & $0.949$ & $0.928$ & $4\cdot10^{5}$ & $101$ & $101$ & $100$ \\
MM & FLSQR-I & $3\cdot10^{0}$ & $49$ & $0.096$ & $0.927$ & $0.789$ & $1\cdot10^{5}$ & $101$ & $0$ & $102$ \\
MM & FLSQR-R & $2\cdot10^{2}$ & $49$ & $0.104$ & $0.928$ & $0.778$ & $3\cdot10^{5}$ & $101$ & $0$ & $102$ \\
MM & FLSQR-W & $2\cdot10^{1}$ & $49$ & $0.099$ & $0.934$ & $0.855$ & $8\cdot10^{4}$ & $101$ & $0$ & $102$ \\
MM & IRW-FLSQR & $3\cdot10^{1}$ & $49$ & $0.083$ & $0.952$ & $0.901$ & $3\cdot10^{8}$ & $101$ & $0$ & $102$ \\
\hline
IAS & S-GKS & $5\cdot10^{6}$ & $150$ & $0.071$ & $0.969$ & $0.981$ & $3\cdot10^{3}$ & $315$ & $456$ & $0$ \\
IAS & resS-GKS & $1\cdot10^{6}$ & $150$ & $0.072$ & $0.968$ & $0.971$ & $1\cdot10^{3}$ & $318$ & $459$ & $0$ \\
IAS & recS-GKS & $2\cdot10^{5}$ & $150$ & $0.077$ & $0.964$ & $0.945$ & $5\cdot10^{2}$ & $525$ & $666$ & $0$ \\
IAS & PS-GKS & $1\cdot10^{7}$ & $150$ & $0.049$ & $0.985$ & $0.997$ & $2\cdot10^{2}$ & $12234$ & $1$ & $12235$ \\
IAS & resPS-GKS & $1\cdot10^{6}$ & $150$ & $0.049$ & $0.985$ & $0.996$ & $2\cdot10^{2}$ & $3227$ & $153$ & $3378$ \\
IAS & recPS-GKS & $1\cdot10^{7}$ & $150$ & $0.060$ & $0.982$ & $0.997$ & $2\cdot10^{2}$ & $3018$ & $1$ & $3019$ \\
IAS & PS-GKB & $1\cdot10^{7}$ & $150$ & $0.049$ & $0.985$ & $0.997$ & $2\cdot10^{2}$ & $22651$ & $0$ & $22652$ \\
IAS & FGK & $5\cdot10^{1}$ & $49$ & $0.127$ & $0.881$ & $0.698$ & $1\cdot10^{4}$ & $101$ & $101$ & $100$ \\
IAS & FLSQR-I & $7\cdot10^{1}$ & $49$ & $0.112$ & $0.914$ & $0.674$ & $8\cdot10^{5}$ & $101$ & $0$ & $102$ \\
IAS & FLSQR-R & $2\cdot10^{2}$ & $49$ & $0.112$ & $0.914$ & $0.669$ & $7\cdot10^{5}$ & $101$ & $0$ & $102$ \\
IAS & FLSQR-W & $2\cdot10^{1}$ & $49$ & $0.099$ & $0.934$ & $0.855$ & $8\cdot10^{4}$ & $101$ & $0$ & $102$ \\
IAS & IRW-FLSQR & $1\cdot10^{2}$ & $49$ & $0.111$ & $0.916$ & $0.687$ & $6\cdot10^{9}$ & $101$ & $0$ & $102$ \\
\bottomrule
\end{tabular}
}
\caption{Test 1: Performance comparison for the 1D cosine problem.}
\label{tab:1d_cosine_performance_results}
\end{table}

\begin{figure}[tb]
    \centering
    \includegraphics[width=.99\textwidth]{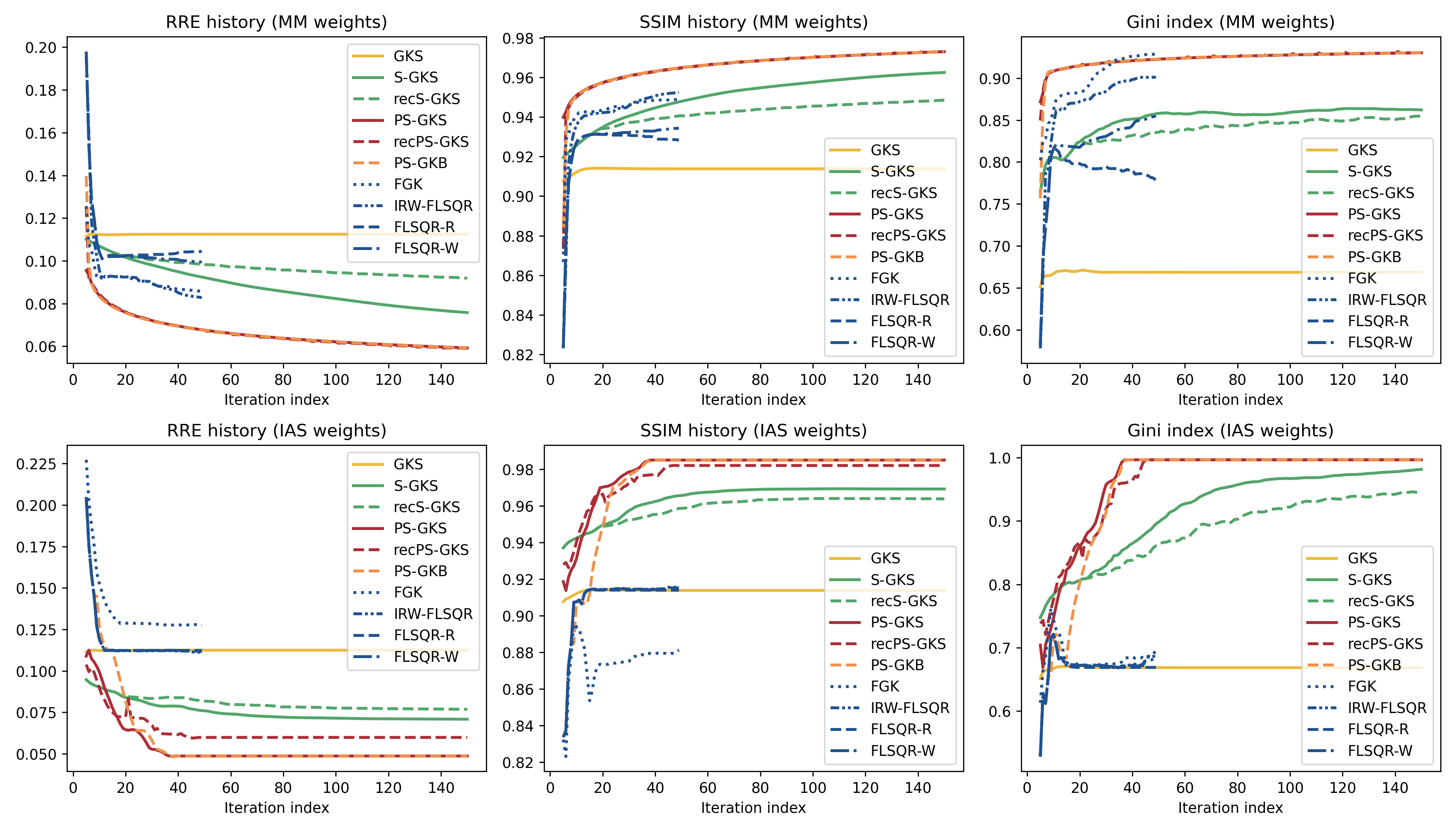}
    \caption{Test 1. The RRE, SSIM, and Gini index for MM (top row) and IAS (bottom row) weights.}
    \label{fig:1d_cosine_performance}
\end{figure}

Note that the methods based on the flexible Golub--Kahan process are subject to breakdown, which is related to the fact that $\operatorname{rank}(\mathbf{A}) = 50$ in this experiment. 
For the restarted and recycled methods, we set $D_{\text{min}} = 15$ and $D_{\text{max}} = 25$. 
We find that the res/recPS-GKS methods behave nearly identically to PS-GKS, with the difference in IAS weights attributed to nonconvexity. 
All three methods outperform both S-GKS and res/recS-GKS. 
The interpretation is that the preconditioning employed by the PS-GKS methods is significantly more effective than that of the S-GKS methods for this experiment. 
The dominant increase in cost for this improved performance is the large number of matvecs required with $\boldsymbol{\Psi}^{-1}$. 
This highlights the importance of the res/recPS-GKS methods for addressing not only the memory concerns of storing a large number of basis vectors but also mitigating the number of matvecs with $\boldsymbol{\Psi}^{-1}$. 
The results obtained by the PS-GKB method closely follow those of PS-GKS, suggesting that there is no significant performance difference between projection methods based on GKS or GKB. 
We also note that the observed faster convergence of PS-GKS using the IAS weights compared to the MM weights should be expected due to our \Cref{thm:main_sparsity_theorem}---since the IAS weights promote sparsity more aggressively than the MM weights. 

Regarding the flexible methods, the results with MM weights roughly follow those of the S-GKS method, until the breakdown occurs. 
However, using the IAS weights, we observe that the flexible methods perform comparably to the unweighted GKS method, which produces a smooth solution. 
This behavior is due to the nonconvexity of the associated regularization penalty and the extremely large condition numbers exhibited by the projected least squares problems solved by the flexible methods, which are significantly higher than for either the S-GKS or PS-GKS methods (see for instance \ref{tab:1d_cosine_performance_results} for more details). 
Unlike the S-GKS and PS-GKS methods, these methods do not project onto orthogonal bases. 
Consequently, the condition numbers of the least squares problems these methods solve cannot be bounded above by that of the full-scale problem. 

\subsubsection{Sensitivity of the MM methods w.r.t.\ $\varepsilon$}
\label{sub:epsilon_sensitivity} 

For methods using MM weights, it is well known that the value of $\varepsilon$ used in defining the smoothed approximation to the $\ell_p$ norm may drastically affect the performance of various methods---a value of $\varepsilon$ that is too large promotes sparsity only weakly, while a value of $\varepsilon$ that is too small may yield numerical instabilities in the method. 
Here, we argue that methods that make use of priorconditioning (including PS-GKS, PS-GKB, and flexible methods) are relatively insensitive to the $\varepsilon$ parameter when compared to S-GKS. 
To demonstrate this, we compare the dependency of the final RRE and Gini index (after 150 iterations) for several methods on the value of the $\varepsilon$ parameter in \cref{fig:1d_cosine_mm_epsilon_sensitivity}. 

\begin{figure}[tb]
    \centering
    \includegraphics[width=.95\textwidth]{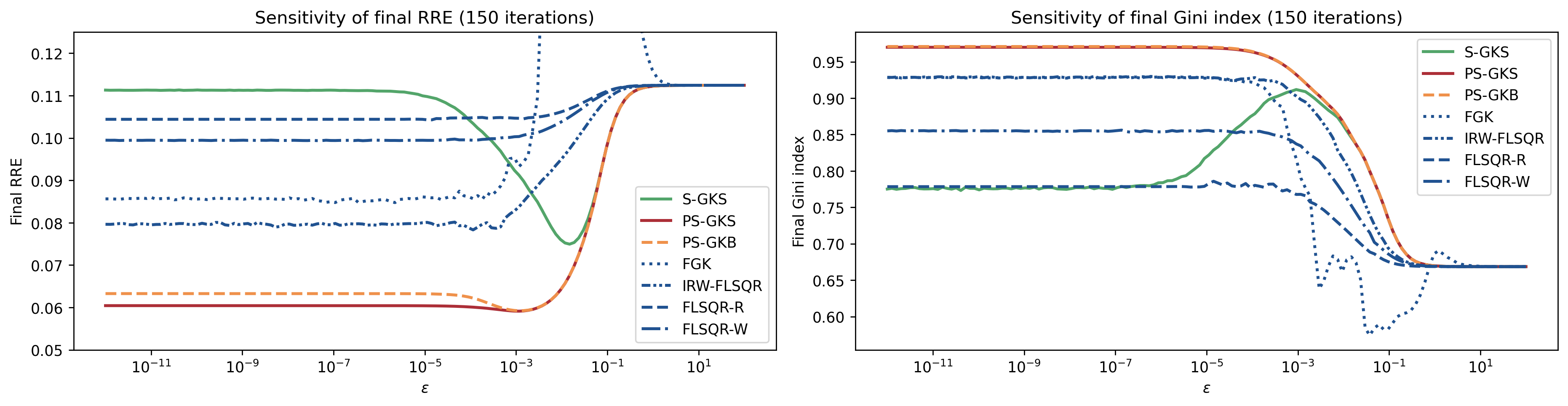}
    \caption{Test 1. Sensitivity of 1D cosine problem results w.r.t.\ the MM $\varepsilon$ parameter in terms of the final RRE (left) and Gini index (right).}
    \label{fig:1d_cosine_mm_epsilon_sensitivity}
\end{figure}

We observe that the quality of the S-GKS solution initially improves as $\varepsilon$ decreases, but then degrades once $\varepsilon$ is reduced below a certain critical threshold. 
Since this threshold is not known a priori, this highlights the role of $\varepsilon$ as an additional tuning parameter for the S-GKS method with MM weights. 
In contrast, we observe that the performance of the PS-GKS, PS-GKB, and flexible methods (except FGK) is comparatively insensitive with respect to  $\varepsilon$, which can thus safely be set to a small value without degrading performance.

\subsection{Test 2: Computerized X-ray tomography problems (CT)}
\label{sub:numerics_tomography} 

In this section we provide a comparison of our PS-GKS with other existing methods for a large-scale limited angle inverse CT problem. 
To this end, we use the TRIPs-Py library \cite{pasha2024trips} to generate the true synthetic CT for the $256 \times 256$ Shepp--Logan phantom image (shown in Figure \ref{fig:2d_tomo_data} (a)) using a parallel beam geometry and contaminated by Gaussian noise with level $\sigma_{\text{NL}} = 1\%$. 
The observational data is shown in \Cref{fig:2d_tomo_data}(b) and consists of 28 view angles in the interval $[0, 2 \pi)$, yielding a measurement operator $\mathbf{A} \in \mathbb{R}^{10136 \times 65536}$ and data $\mathbf{b} \in \mathbb{R}^{10136}$. 
\begin{figure}[tb]
    \centering
    \includegraphics[width=.5\textwidth]{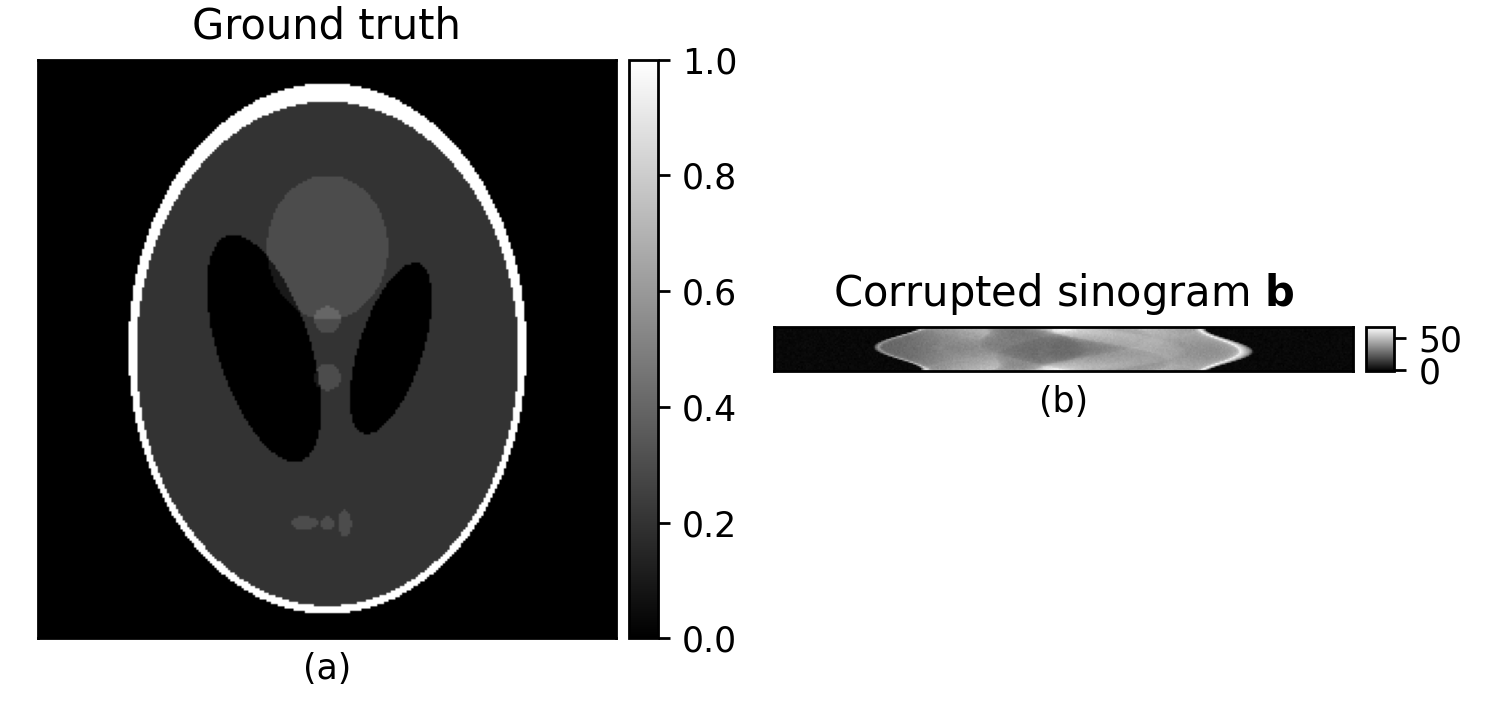}
    \caption{
        Test 2. a) True image of size $256 \times 256$. b) Sinogram obtained from $28$ view angles.   
    }
    \label{fig:2d_tomo_data}
\end{figure}
We use the anisotropic two-dimensional first derivative operator with Neumann boundary conditions for the sparsifying transform $\boldsymbol{\Psi}$. In this case, $\operatorname{ker}(\boldsymbol{\Psi}) = \operatorname{span}\{ \mathbf{1}_N \}$ and the oblique pseudoinverse must be used to employ priorconditioning. 
To compute matvecs with $\boldsymbol{\Psi}_{\itidx}^\dagger$, we utilize a preconditioned CG method with a GPU-accelerated spectral preconditioner based on the unweighted pseudoinverse $\boldsymbol{\Psi}^\dagger$; 
See \cite[Appendix B]{lindbloom2024generalized}. 
Furthermore, we set $D_{\text{min}} = 25$ and $D_{\text{max}} = 40$ for the restarted and recycled methods. We run all methods for a maximum of 100 iterations, except for the restarted and recycled methods which we run longer for 200 iterations since it is feasible to do so due to their mitigated basis sizes. The MM weight formulations are kept the same as in \cref{tab:methods_summary}, but for the IAS weights we instead use nonconvex hyper-prior parameters $r = 1/2$, $\beta = 3.01$, corresponding to an approximation of  $\ell_{2/3}$-norm regularization (see \cite{calvetti2020sparse}). 
We use this choice instead of $r = -1$, $\beta = 1$ because the latter promotes sparsity more aggressively. 
In our tests, all methods struggle to reconstruct solutions that do not default to undesirable smooth local minima. 
We leave the exploration of the $r = -1$ case to future work.

Performance metrics, reconstructions, and error images for the experiment are shown in \cref{tab:2d_tomo_performance_results,fig:2d_tomo_reconstructions_mm,fig:2d_tomo_reconstructions_ias}. We observe that the PS-GKS methods outperform the S-GKS methods using both MM and IAS weights. This should be evaluated against the number of matvecs these methods require with $\boldsymbol{\Psi}_{\itidx+1}^\dagger$. Using the IAS weights we observe somewhat erratic initial convergence behavior, which is attributed to the nonconvexity of the associated regularization penalty.

Since in this example $\boldsymbol{\Psi}^{-1}$ does not exist, many existing methods can not be applied. We consider for comparison FGK. We found particularly poor performance using the MM weight formulation MM$_{4}$ which uses $\varepsilon = 10^{-10}$, so in this example we instead use MM$_{2}$ with $\varepsilon = 10^{-2}$. We find that the FGK method does not perform as well as PS-GKS or even the S-GKS method in this numerical test. A possible explanation for this is seen in that the condition number of the projected least squares problem solved by FGK grows increasingly large as the iterations progress (exceeding $\kappa = 10^7$ with the MM weights, and $\kappa = 10^{10}$ with the IAS weights). Indeed, these large condition numbers are shared by the projection matrix $\mathbf{Z}_{\itidx} = [ (\boldsymbol{\Psi}_{1})_{\mathbf{A}}^\dagger ((\boldsymbol{\Psi}_{1})_{\mathbf{A}}^\dagger)^T \mathbf{v}_1, \ldots, (\boldsymbol{\Psi}_{\ell})_{\mathbf{A}}^\dagger ((\boldsymbol{\Psi}_{\ell})_{\mathbf{A}}^\dagger)^T \mathbf{v}_{\itidx}]$ used to perform the projection (see \cref{ssub:fgk}). We speculate that the large condition number results from the fact that $\mathbf{Z}_{\ell}$ incorporates information from the weights at all previous iterations, and the weights at later iterations may vary drastically from the initial weights $\mathbf{w}_0$. A potential remedy for this issue would be to further incorporate restarting/recycling into the FGK method, but we do not pursue this here.

\begin{table}[ht]
\centering
\scalebox{0.8}{
\begin{tabular}{lllrrrcrrr}
\toprule
Weights & Method & $\regparam$ & $n_{\textrm{iter}}$ & RRE & SSIM & Gini index & $n_{\mathbf{A}}$ & $n_{\boldsymbol{\Psi}}$ & $n_{\boldsymbol{ \Psi }^\dagger }$ \\
\midrule
--- & GKS & $2\cdot10^{2}$ & $100$ & $0.418$ & $0.422$ & $0.538$ & $215$ & $305$ & $0$ \\
\hline
MM & S-GKS & $2\cdot10^{1}$ & $100$ & $0.192$ & $0.808$ & $0.855$ & $215$ & $306$ & $0$ \\
MM & resS-GKS & $2\cdot10^{2}$ & $200$ & $0.417$ & $0.423$ & $0.539$ & $420$ & $611$ & $0$ \\
MM & recS-GKS & $2\cdot10^{1}$ & $200$ & $0.191$ & $0.808$ & $0.855$ & $715$ & $906$ & $0$ \\
MM & PS-GKS & $2\cdot10^{1}$ & $100$ & $0.151$ & $0.934$ & $0.957$ & $5661$ & $103$ & $5760$ \\
MM & resPS-GKS & $2\cdot10^{1}$ & $200$ & $0.151$ & $0.934$ & $0.957$ & $4181$ & $203$ & $4380$ \\
MM & recPS-GKS & $2\cdot10^{1}$ & $200$ & $0.150$ & $0.934$ & $0.957$ & $6308$ & $203$ & $6507$ \\
MM & PS-GKB & $2\cdot10^{1}$ & $100$ & $0.152$ & $0.934$ & $0.958$ & $10103$ & $101$ & $10200$ \\
MM & FGK & $3\cdot10^{0}$ & $100$ & $0.482$ & $0.216$ & $0.557$ & $203$ & $201$ & $200$ \\
\hline
IAS & S-GKS & $7\cdot10^{1}$ & $100$ & $0.371$ & $0.519$ & $0.828$ & $215$ & $306$ & $0$ \\
IAS & resS-GKS & $5\cdot10^{1}$ & $200$ & $0.382$ & $0.501$ & $0.812$ & $715$ & $906$ & $0$ \\
IAS & recS-GKS & $7\cdot10^{1}$ & $200$ & $0.364$ & $0.525$ & $0.848$ & $420$ & $611$ & $0$ \\
IAS & PS-GKS & $3\cdot10^{2}$ & $100$ & $0.117$ & $0.809$ & $0.986$ & $5661$ & $103$ & $5760$ \\
IAS & resPS-GKS & $3\cdot10^{2}$ & $200$ & $0.111$ & $0.974$ & $0.986$ & $4181$ & $203$ & $4380$ \\
IAS & recPS-GKS & $3\cdot10^{2}$ & $200$ & $0.111$ & $0.974$ & $0.987$ & $6308$ & $203$ & $6507$ \\
IAS & PS-GKB & $1\cdot10^{-7}$ & $100$ & $0.431$ & $0.423$ & $0.575$ & $10103$ & $101$ & $10200$ \\
IAS & FGK & $6\cdot10^{0}$ & $100$ & $0.478$ & $0.223$ & $0.491$ & $203$ & $201$ & $200$ \\
\bottomrule
\end{tabular}
}
\caption{Test 2. Summary of performance metrics for MM and IAS weights.}
\label{tab:2d_tomo_performance_results}
\end{table}

\begin{figure}[tb]
    \centering
    \includegraphics[width=.99\textwidth]{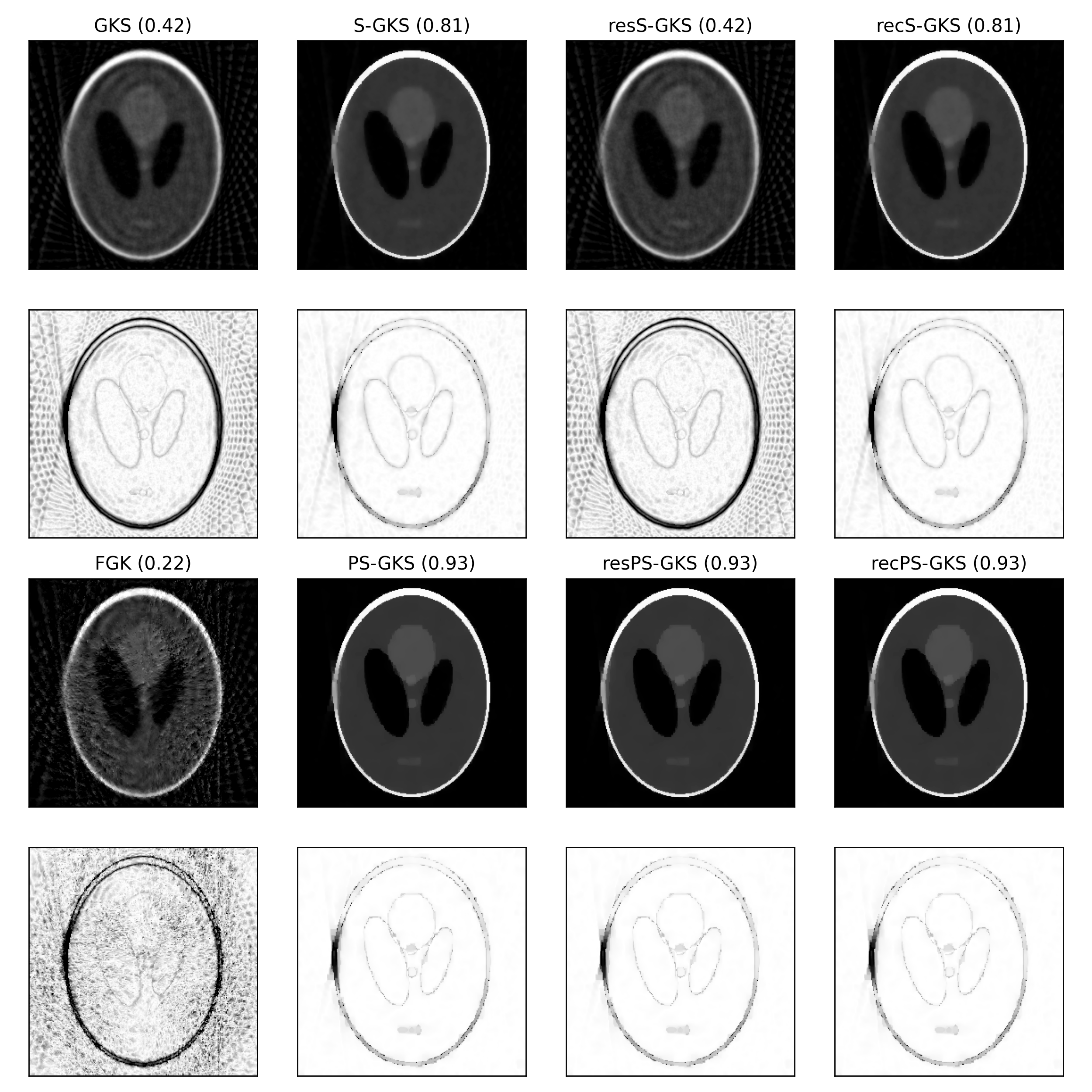}
    \caption{
        Test 2. Reconstructions by different methods using MM weights with the SSIM of the reconstruction shown in parentheses (first and third rows), as well as  error images, shown in the reverse color scale and with a shared range of values across all methods (second and fourth rows).
    }
    \label{fig:2d_tomo_reconstructions_mm}
\end{figure}

\begin{figure}[tb]
    \centering
    \includegraphics[width=.99\textwidth]{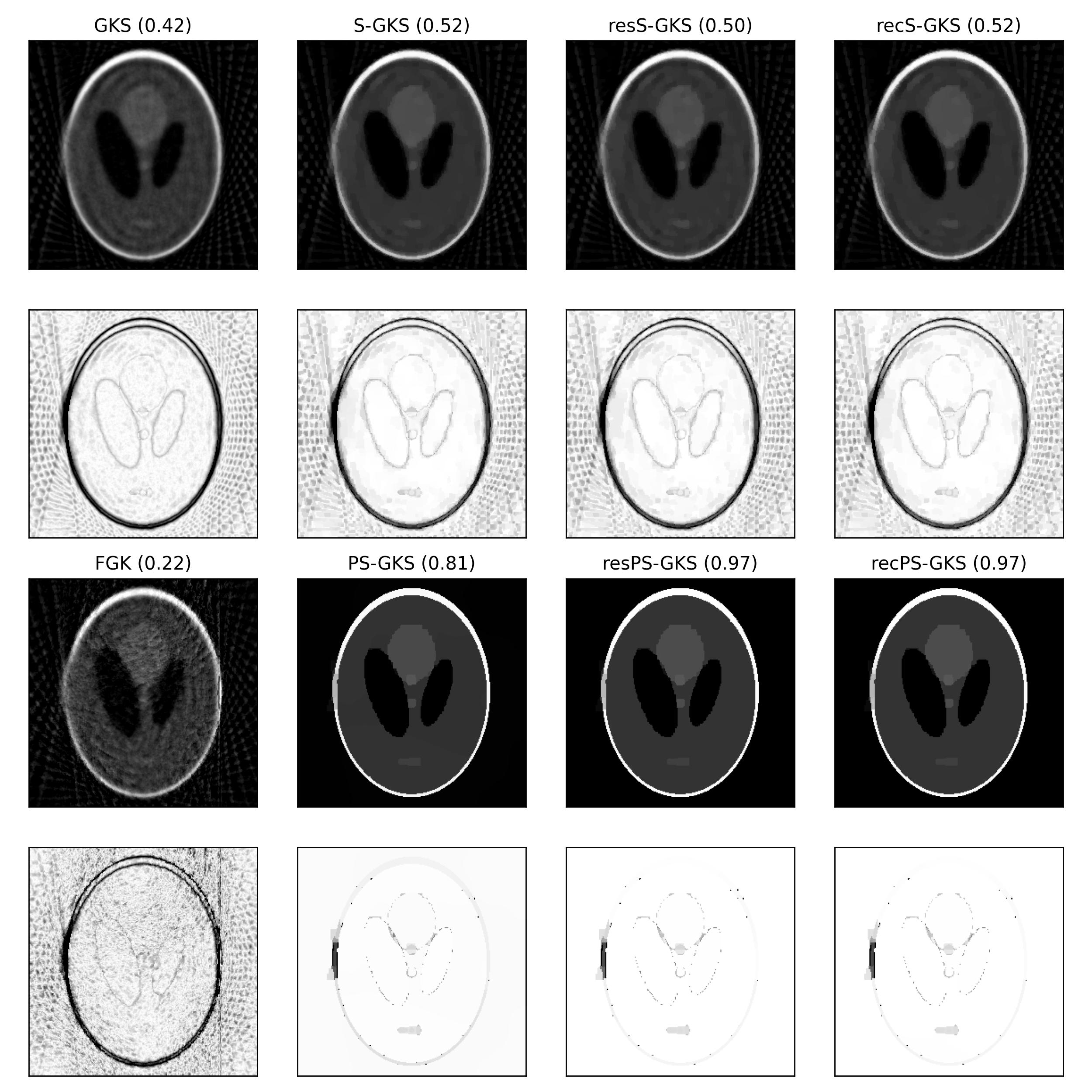}
    \caption{
        Test 2. Reconstructions by different methods using IAS weights with the SSIM of the reconstruction shown in parentheses (first and third rows), as well as  error images, shown in the reverse color scale and with a shared range of values across all methods (second and fourth rows).
    } 
    \label{fig:2d_tomo_reconstructions_ias}
\end{figure}

\begin{figure}[tb]
    \centering
    \includegraphics[width=.99\textwidth]{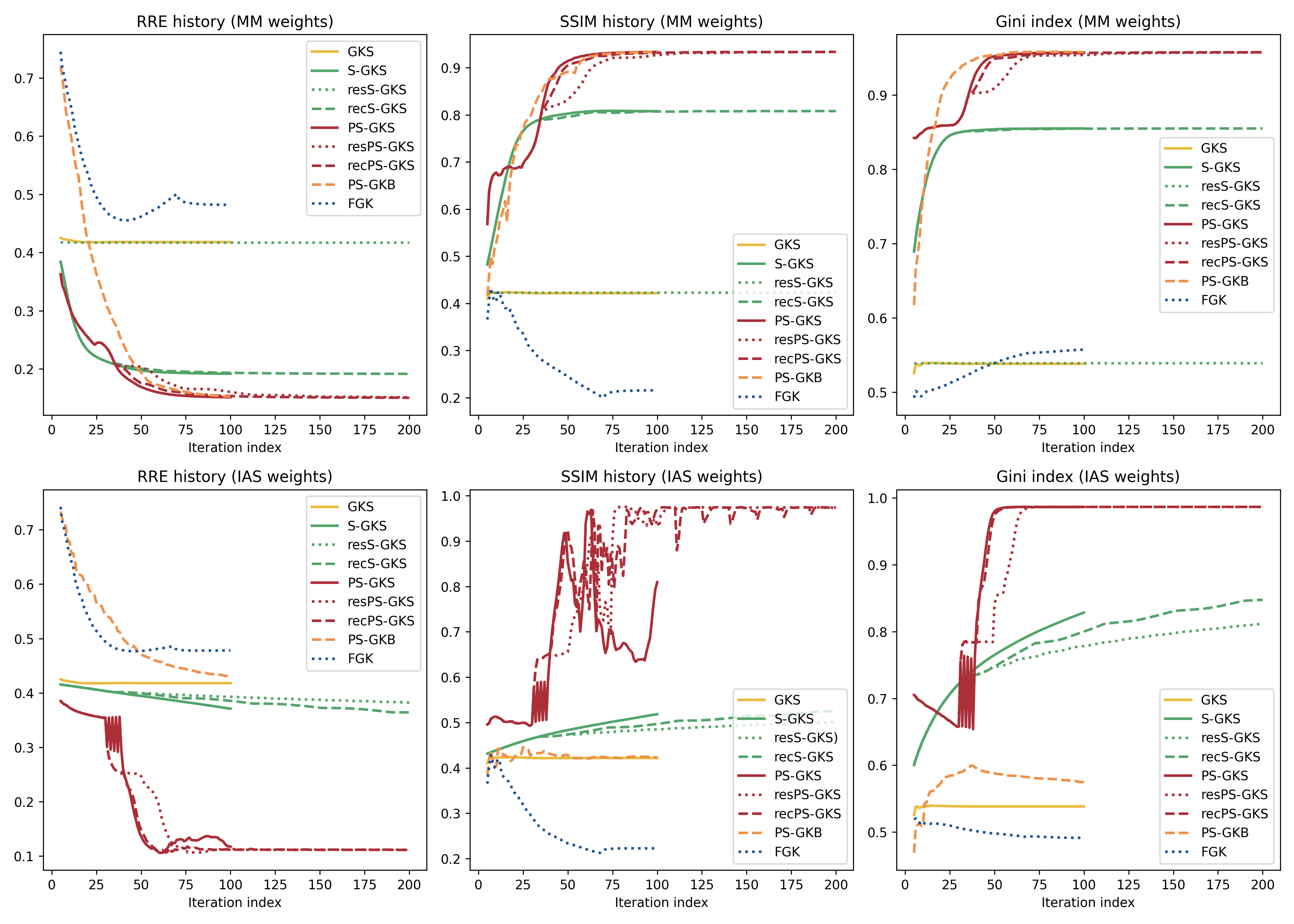}
    \caption{Test 2. Histories of the RRE, SSIM, and Gini index for all methods compared. (top row) results using MM weights; (bottom row) results using IAS weights.}
    \label{fig:2d_tomo_reconstructions_ias_performance}
\end{figure}


\section{Conclusion and outlook}\label{sec:conclusion}

In this paper we propose priorconditioning methods that can be used in both deterministic and Bayesian setting for ill-posed inverse problems with sparsity-promoting priors. Namely, we develop prior conditioned sparsity generalized Krylov subspace  PS-GKS methods that when used in the context of reweighting exhibits superior reconstructions at a relatively small number of iterations. We further propose variations of the PS-GKS method, including restarting and recycling (resPS-GKS and recPS-GKS).
Such methods allow us to improve the computed solution quality and the computational time by reducing memory requirements, and automatically select a regularization parameter at a reduced number of iterations compared to the original S-GKS. 
For the Bayesian setting, we can estimate one of the hyperprior parameters automatically. While we only focus on the GSBL priors, our work has potential to be extended to other hierarchical priors and further can be used for more efficient posterior characterization, i.e., UQ for the sparsity-promoting priors. We provide a theoretical analysis showing the benefits of priorconditioning in for sparsity-promoting inverse problems.

As future work we consider extension to the dynamic inverse problems setting which we anticipate to present computational difficulties especially in efficient estimation of the priorconditioner. Addressing computational concerns that arise from the need to estimate pseudoinverses of large and complex matrices is left as future work. Our preliminary results on multigrid methods show that we can avoid the need for GPU usage and still maintain low computational time -- a direction which we aim to pursue for the dynamic inverse problems setting \cite{pasha2023computational, lan2023spatiotemporal}.


\section*{Acknowledgements}
JL and JG were supported by the US DOD (ONR MURI) grant \#N00014-20-1-2595. MP acknowledges support from NSF DMS 2410699. Any opinions, findings, conclusions, or recommendations expressed in this material are those of the authors and do not necessarily reflect the views of the National Science Foundation.

\appendix

\section{Proof of \cref{thm:main_sparsity_theorem}}\label{app:proof_eigenvalue_thm} The lower bound is trivial. To obtain the second upper bound, a generalization of Ostrowski's theorem (see \cref{thm:ostrowski}) can be applied twice to obtain
    \begin{equation}\label{eq:sparsity_theorem_l3}
    \begin{aligned}
        \lambda_i(\overline{\mathbf{A}}^T \overline{\mathbf{A}}) 
            & \leq \lambda_1(\mathbf{A}^T \mathbf{A}) \lambda_i( ( (\mathbf{W} \boldsymbol{\Psi})^\dagger)^T \mathbf{E}^T \mathbf{E} (\mathbf{W} \boldsymbol{\Psi})^\dagger ) \\
            & \leq \lambda_1(\mathbf{A}^T \mathbf{A}) \lambda_1(\mathbf{E}^T \mathbf{E}) \lambda_i( ((\mathbf{W} \boldsymbol{\Psi})^\dagger)^T  (\mathbf{W} \boldsymbol{\Psi})^\dagger ) \\
            & \leq \lambda_1(\mathbf{A}^T \mathbf{A}) \lambda_i( ( \mathbf{W} \boldsymbol{\Psi} \boldsymbol{\Psi}^T \mathbf{W} )^\dagger ) 
    \end{aligned}
    \end{equation} 
    since $\mathbf{E}$ is a projector satisfying $\mathbf{E}^2 = \mathbf{E}$ with eigenvalues in $\{ 0, 1 \}$. 
    Let $\hat{\boldsymbol{\Psi}}$ be a square matrix such that $\boldsymbol{\Psi} \boldsymbol{\Psi}^T = \hat{\boldsymbol{\Psi}} \hat{\boldsymbol{\Psi}}^T$ (e.g., $\hat{\boldsymbol{\Psi}} = (\boldsymbol{\Psi} \boldsymbol{\Psi}^T)^{1/2}$), and  
    note that $\lambda_i( ( \mathbf{W} \boldsymbol{\Psi} \boldsymbol{\Psi}^T \mathbf{W} )^\dagger   ) = (\lambda_{R-i+1}( \mathbf{W} \boldsymbol{\Psi} \boldsymbol{\Psi}^T \mathbf{W}    ))^{-1} = (\lambda_{R-i+1}(  \hat{\boldsymbol{\Psi}}^T \mathbf{W}^2 \hat{\boldsymbol{\Psi}}    ))^{-1} $ for $i = 1, \ldots, R$, with the remaining $K-R$ eigenvalues being zero. Another application of \cref{thm:ostrowski} gives 
    \begin{align}
        \lambda_{R-i+1}(\hat{\boldsymbol{\Psi}}^T \mathbf{W}^2 \hat{\boldsymbol{\Psi}}) \geq \lambda_R(\hat{\boldsymbol{\Psi}}^T \hat{\boldsymbol{\Psi}}) \lambda_{N-i+1}(\mathbf{W}^2) = \lambda_R(\boldsymbol{\Psi}^T \boldsymbol{\Psi}) / \lambda_{i}(\mathbf{W}^{-2})
    \end{align}
    for $i = 1, \ldots, R$, which combined with \cref{eq:sparsity_theorem_l3} gives
    \begin{align}
\lambda_i(\overline{\mathbf{A}}^T \overline{\mathbf{A}}) \leq \lambda_i(\mathbf{W}^{-2})  \lambda_1(\mathbf{A}^T \mathbf{A}) /\lambda_R(\boldsymbol{\Psi}^T \boldsymbol{\Psi})
    \end{align}
    for $i = 1, \ldots, R$, with the remaining eigenvalues satisfy  $\lambda_i(\overline{\mathbf{A}}^T \overline{\mathbf{A}}) = 0$ for $i = R+1, \ldots, K$. Shifting these eigenvalues by $+\regparam$ yields the second upper bound. 
    The first upper bound is obtained by applying the generalized Ostrowski theorems of \cite{higham1998modifying} to obtain $\lambda_i(\overline{\mathbf{A}}^T \overline{\mathbf{A}}) \leq \lambda_i( \mathbf{A}^T \mathbf{A})  \lambda_1( ((\mathbf{W} \boldsymbol{\Psi})^\dagger)^T  (\mathbf{W} \boldsymbol{\Psi})^\dagger )$ for $i = 1, \ldots, K$, which following the preceding arguments can be bounded as $\lambda_i(\overline{\mathbf{A}}^T \overline{\mathbf{A}}) \leq \lambda_i(\mathbf{A}^T \mathbf{A}) \lambda_1(\mathbf{W}^{-2})/\lambda_R(\boldsymbol{\Psi}^T \boldsymbol{\Psi})$. Again, shifting these eigenvalues by $+\regparam$ gives the desired upper bound.


\section{MM weights}
\label{app:mm_weights} Various choices of the weight matrix \cref{eq:MM_original} for $\ell_p$-regularization have been utilized in the literature on hybrid projection methods. 
Here, we present the different choices that can be used in tandem with \cref{tab:methods_summary} to determine the weighting scheme used for each method in our numerical experiments. 
We define weighting schemes $\text{MM}_{i}$ for $i = 1, \ldots, 4$ as $\mathbf{W}_{\itidx+1} = \operatorname{diag}(\varphi_{\text{MM}_i}(\boldsymbol{\Psi} \mathbf{x}_{\itidx}) )$ with $\varphi_{\text{MM}_i}(z) = \left(z^2 + \varepsilon_i^2 \right)^{\frac{p-2}{4}}$, where $\varepsilon_1 = 1, \varepsilon_2 = 10^{-2}, \varepsilon_3 = 10^{-3}$, and $\varepsilon_4 = 10^{-4}$. 
Additionally, we define a fifth choice $\text{MM}_5$ by $\mathbf{W}_{\itidx+1} = \operatorname{diag}(\varphi_{\text{MM}_5}(\boldsymbol{\Psi} \mathbf{x}_{\itidx} ))$ with $\varphi_{\text{MM}_5}(z) = |z|^{\frac{p-2}{2}}$ if $z \geq \tau_1$ and $\varphi_{\text{MM}_5}(z) = \tau_2^{\frac{p-2}{2}}$ otherwise, where $\tau_1 = 10^{-10}$ and $\tau_2 = 10^{-16}$.

\section{The S-GKS algorithm}\label{sec:sgks_algo} \cref{alg:sgks} provides pseudocode for the S-GKS method.

\begin{algorithm}[!ht]
\caption{The S-GKS method}
\label{alg:sgks}
\begin{algorithmic}[1]
	\Require{$\bA, \boldsymbol{\Psi}, \mathbf{b}, \bx_{0}$} 
	\Ensure{An approximate solution $\bx_{\itidx+1}$}
	\Function{$\bx_{\itidx+1} = $ S-GKS }{$\bA, \boldsymbol{\Psi}, \mathbf{b}, \bx_{0}$}
	\State	Generate initial subspace basis $\bV_{0}\in \R^{N \times D_0}$ s.\,t.\ $\bV_{0}^{T}\bV_{0}=\bI_{D_0}$\; 
\FOR {$\itidx=0,1,2,\ldots$ \text{until convergence}}{
\State Update weights $\mathbf{w}_{\itidx+1}$ given $\boldsymbol{\Psi} \mathbf{x}_{\itidx}$, according to the specific IRLS method\;	
\State $\mathbf{W}_{\itidx+1} = \operatorname{diag}(\bw_{\itidx+1})$ and $\boldsymbol{\Psi}_{\itidx+1} = \mathbf{W}_{\itidx+1} \boldsymbol{\Psi}$
\State $\bA \bV_{\itidx}$ and $\boldsymbol{\Psi}_{\itidx+1} \bV_{\itidx}$\;
\State	$\bA\bV_{\itidx} = \bQ_{\bA}\bR_{\bA}$ and $\boldsymbol{\Psi}_{\itidx+1}  \bV_{\itidx}=\bQ_{\boldsymbol{\Psi}}\bR_{\boldsymbol{\Psi}}$\;\Comment{Compute/update the economic QR}
\State Select $\regparam_{\itidx+1}$ by heuristic (e.g., DP) \;
\State $\bz_{\itidx+1}$ to solve the projected problem with selected $\regparam_{\itidx+1}$\;\Comment{Solve projected problem}
\State $\bx_{\itidx+1}=\bV_{\itidx}\bz_{\itidx+1}$\;\Comment{Full-scale solution via projection}
\State $\br_{\itidx+1}=\bA^T(\bA\bV_{\itidx} \bz_{\itidx+1} -\mathbf{b})+\regparam_{\itidx+1} \boldsymbol{\Psi}_{\itidx+1}^T \boldsymbol{\Psi}_{\itidx+1} \bV_{\itidx}\bz_{\itidx+1}$\;\Comment{Full-scale residual}
\State		 $\br_{\itidx+1} = \br_{\itidx+1} - \bV_{\itidx}\bV_{\itidx}^{T}\br_{\itidx+1}$\; \Comment{Reorthogonalize (optional)}
\State		 $\bv_{\rm new}=\frac{\br_{\itidx+1}}{\|\br_{\itidx+1}\|_{2}}$; $\bV_{\itidx+1}=[\bV_{\itidx}, \bv_{\rm new}]$\;\Comment{Enlarge the solution subspace} }
\ENDFOR
	\EndFunction
\end{algorithmic}
\end{algorithm}

\section{Basis vector comparison}\label{sec:basis_vector_comparisons} \Cref{fig:1d_cosine_results_mm_and_ias_basis_vectors} provides a comparison of the basis vectors generated by the S-GKS and PS-GKS methods for the 1D cosine problem. 

\begin{figure}
    \centering
    \includegraphics[width=.90\textwidth]{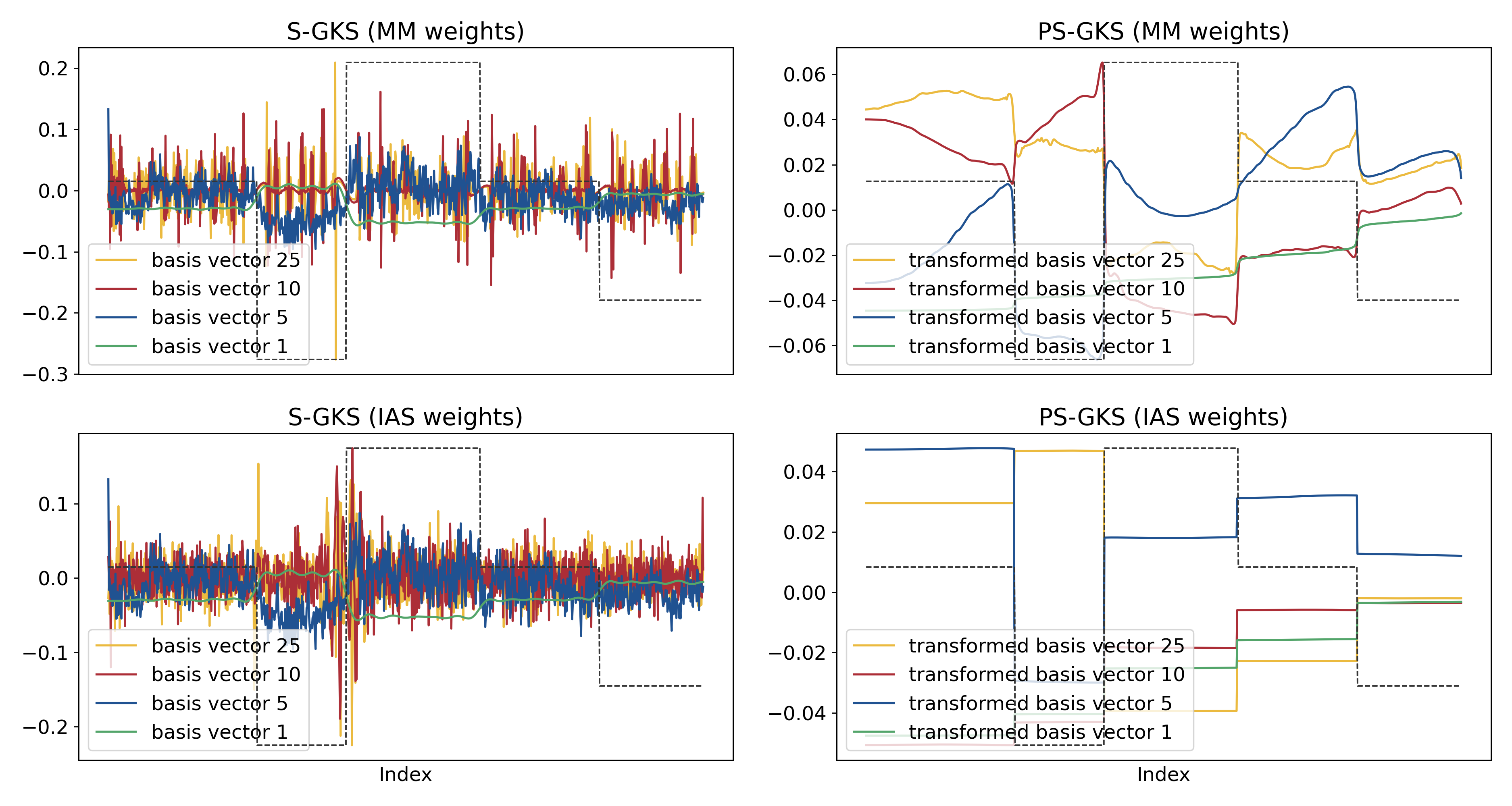}
    \caption{Test 1. Comparison of the basis vectors generated by S-GKS and PS-GKS, with MM weights ($p = 1$, $\varepsilon = 10^{-4}$) and IAS weights ($r = -1$, $\beta = 1$). (left column) select S-GKS basis vectors; (right column) select PS-GKS basis vectors, transformed into the same space as the S-GKS basis vectors to aid the comparison. In all plots, the ground truth vector is overlayed with a dashed black line.
    }
\label{fig:1d_cosine_results_mm_and_ias_basis_vectors}
\end{figure}

\section{Generalized Ostrowski Theorems} The proofs of the eigenvalue bounds for $\mathbf{Q}_{\regparam}^{\text{st}}$ and $\mathbf{Q}_{\regparam}^{\text{pr}}$ given in \Cref{sub:analysis_of_priorconditioning} rely on the following two generalizations of the Ostrowski theorem.    Their proofs rely on the Ostrowski theorem \cite[Corollary 4.5.11]{johnson1985matrix} as well as the Cauchy interlace theorem \cite[Theorem 10.1.1]{parlett1998symmetric}. Additionally, in \Cref{fig:eigenvalue_bounds_plot} we provide a numerical demonstration of our \Cref{thm:main_sparsity_theorem} applied to the 1D cosine problem.

\begin{theorem}[Rank-deficient rectangular Ostrowski theorem]\label{thm:ostrowski}
    Let $\mathbf{C} \in \mathbb{R}^{N \times N}$ be symmetric, and let $\mathbf{X} \in \mathbb{R}^{N \times M}$ with $R = \operatorname{rank}(\mathbf{X})$. Then the eigenvalues of $\mathbf{X}^T \mathbf{C} \mathbf{X}$ satisfy
    \begin{align}
        \lambda_{i + (N-R)}(\mathbf{C}) \lambda_R(\mathbf{X}^T \mathbf{X}) \leq \lambda_i(\mathbf{X}^T \mathbf{C} \mathbf{X}) \leq \lambda_i(\mathbf{C}) \lambda_1(\mathbf{X}^T \mathbf{X}), \quad i = 1, \ldots, R,
    \end{align}
    and the remaining $M-R$ eigenvalues of $\mathbf{X}^T \mathbf{C} \mathbf{X}$ are all zero. Furthermore, if $\mathbf{C}$ is symmetric positive semidefinite (SPSD) then the eigenvalues of $\mathbf{X}^T \mathbf{C} \mathbf{X}$ also satisfy
    \begin{align}
\lambda_i(\mathbf{X}^T \mathbf{C} \mathbf{X}) \leq \lambda_1(\mathbf{C}) \lambda_i(\mathbf{X}^T \mathbf{X}) , \quad i = 1, \ldots, R.
    \end{align}
\end{theorem}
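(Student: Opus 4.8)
The plan is to reduce the rank-deficient, rectangular product $\mathbf{X}^T \mathbf{C} \mathbf{X}$ to a nonsingular square congruence on an $R$-dimensional space, where the classical Ostrowski theorem \cite[Corollary 4.5.11]{johnson1985matrix} applies directly, and then to transfer eigenvalue information between the resulting compressed symmetric matrix and $\mathbf{C}$ itself via the Cauchy interlace theorem \cite[Theorem 10.1.1]{parlett1998symmetric}. Concretely, I would begin from the thin singular value decomposition $\mathbf{X} = \mathbf{U}_1 \boldsymbol{\Sigma}_1 \mathbf{V}_1^T$, where $\mathbf{U}_1 \in \mathbb{R}^{N \times R}$ and $\mathbf{V}_1 \in \mathbb{R}^{M \times R}$ have orthonormal columns and $\boldsymbol{\Sigma}_1 \in \mathbb{R}^{R \times R}$ is diagonal and nonsingular, with diagonal entries equal to the nonzero singular values of $\mathbf{X}$.

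Substituting the SVD gives $\mathbf{X}^T \mathbf{C} \mathbf{X} = \mathbf{V}_1 \mathbf{G} \mathbf{V}_1^T$ with $\mathbf{G} = \boldsymbol{\Sigma}_1 \mathbf{B} \boldsymbol{\Sigma}_1 \in \mathbb{R}^{R \times R}$ and $\mathbf{B} = \mathbf{U}_1^T \mathbf{C} \mathbf{U}_1$. Because $\mathbf{V}_1$ has orthonormal columns, conjugation by $\mathbf{V}_1$ reproduces the spectrum of $\mathbf{G}$ and appends exactly $M - R$ zeros; this immediately yields the claim that $M-R$ eigenvalues vanish, and (once the positivity noted below is in hand) it identifies $\lambda_i(\mathbf{X}^T \mathbf{C} \mathbf{X})$ with $\lambda_i(\mathbf{G})$ for $i = 1, \ldots, R$. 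Next, since $\boldsymbol{\Sigma}_1$ is nonsingular and $\mathbf{B}$ is symmetric, Ostrowski's theorem supplies positive scalars $\theta_i \in [\lambda_R(\mathbf{X}^T\mathbf{X}), \lambda_1(\mathbf{X}^T\mathbf{X})]$---observing that $\boldsymbol{\Sigma}_1^2$ carries precisely the nonzero eigenvalues of $\mathbf{X}^T\mathbf{X}$---with $\lambda_i(\mathbf{G}) = \theta_i \lambda_i(\mathbf{B})$. Finally, viewing $\mathbf{B} = \mathbf{U}_1^T \mathbf{C} \mathbf{U}_1$ as a compression of $\mathbf{C}$ by the orthonormal $\mathbf{U}_1$, the Cauchy interlace theorem gives $\lambda_{i+(N-R)}(\mathbf{C}) \leq \lambda_i(\mathbf{B}) \leq \lambda_i(\mathbf{C})$ for $i = 1, \ldots, R$, where the index shift $N-R$ reflects the drop from dimension $N$ to dimension $R$. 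Multiplying the Ostrowski and Cauchy bounds then reproduces the two-sided estimate in the statement.

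For the SPSD addendum I would argue slightly differently to reach the alternative upper bound $\lambda_1(\mathbf{C}) \lambda_i(\mathbf{X}^T\mathbf{X})$. When $\mathbf{C}$ is SPSD, so is $\mathbf{B}$, whence $\mathbf{B} \preceq \lambda_1(\mathbf{B}) \mathbf{I}_R \preceq \lambda_1(\mathbf{C}) \mathbf{I}_R$ in the Loewner order (using $\lambda_1(\mathbf{B}) \leq \lambda_1(\mathbf{C})$ from interlacing). Congruence by $\boldsymbol{\Sigma}_1$ preserves this ordering, so $\mathbf{G} \preceq \lambda_1(\mathbf{C}) \boldsymbol{\Sigma}_1^2$, and monotonicity of eigenvalues under the Loewner order yields $\lambda_i(\mathbf{G}) \leq \lambda_1(\mathbf{C}) \lambda_i(\boldsymbol{\Sigma}_1^2) = \lambda_1(\mathbf{C}) \lambda_i(\mathbf{X}^T\mathbf{X})$. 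Equivalently, one may write $\mathbf{X}^T \mathbf{C} \mathbf{X} = (\mathbf{C}^{1/2}\mathbf{X})^T (\mathbf{C}^{1/2}\mathbf{X})$ and invoke the product inequality $\sigma_i(\mathbf{C}^{1/2}\mathbf{X}) \leq \sigma_1(\mathbf{C}^{1/2}) \sigma_i(\mathbf{X})$ before squaring.

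The main obstacle, and the place demanding the most care, is the bookkeeping of eigenvalue ordering and signs as the structural zeros enter through the rank deficiency and as Ostrowski's \emph{multiplicative} relation interacts with interlacing. In the SPSD regime relevant to the applications in \Cref{sub:analysis_of_priorconditioning} this is automatic: $\mathbf{G}$ is positive semidefinite, so its eigenvalues dominate the appended zeros and the top $R$ order statistics of $\mathbf{X}^T\mathbf{C}\mathbf{X}$ coincide with those of $\mathbf{G}$, while the nonnegativity of $\lambda_i(\mathbf{B})$ keeps every product in the asserted direction. I would therefore carry out the argument under this positivity---which is exactly the setting used downstream---rather than tracking the sign reversals that would otherwise arise for strongly indefinite $\mathbf{C}$.
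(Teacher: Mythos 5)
Your proof is correct, and while it uses the same ingredients as the paper's---the SVD, the square Ostrowski theorem, and Cauchy interlacing---you assemble them along a genuinely cleaner path. The paper works with the \emph{full} SVD, which forces separate tall ($N \geq M$) and wide ($N \leq M$) cases and, because the padded singular-value factor is singular in the rank-deficient setting, requires inserting a free parameter $\gamma$ into the trailing diagonal entries so that the square Ostrowski theorem applies, after which $\gamma = \sigma_R$, $\gamma = \sigma_1$, and $\gamma = 0$ are chosen to extract the lower, upper, and SPSD bounds respectively. Your thin-SVD compression $\mathbf{X}^T\mathbf{C}\mathbf{X} = \mathbf{V}_1(\boldsymbol{\Sigma}_1 \mathbf{B} \boldsymbol{\Sigma}_1)\mathbf{V}_1^T$ with $\mathbf{B} = \mathbf{U}_1^T\mathbf{C}\mathbf{U}_1$ eliminates both complications: $\boldsymbol{\Sigma}_1$ is nonsingular by construction, the $M-R$ structural zeros are immediate, and a single application of Ostrowski on the $R$-dimensional space combined with interlacing for the compression $\mathbf{B}$ yields both bounds at once, with no case split and no parameter optimization. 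Your Loewner-order argument for the SPSD refinement ($\mathbf{B} \preceq \lambda_1(\mathbf{C})\mathbf{I}_R$, congruence by $\boldsymbol{\Sigma}_1$, eigenvalue monotonicity) likewise replaces the paper's device of splitting off $\mathbf{Z}^{1/2}$ and commuting the factors; your alternative via $\sigma_i(\mathbf{C}^{1/2}\mathbf{X}) \leq \sigma_1(\mathbf{C}^{1/2})\,\sigma_i(\mathbf{X})$ is essentially that same device. What the paper's route buys is only explicitness about the ambient-dimension bookkeeping; yours is shorter and less error-prone. Finally, your caveat about indefinite $\mathbf{C}$ is well taken but costs you nothing relative to the paper: the paper's own step $\lambda_i(\mathbf{Z}) = \lambda_i\left((\mathbf{U}^T\mathbf{C}\mathbf{U})_{1:R,1:R}\right)$ for $i \leq R$ fails in exactly the same indefinite situations (appended zeros can outrank negative eigenvalues in descending order), and every downstream use of \cref{thm:ostrowski} in \cref{sub:analysis_of_priorconditioning} and \cref{app:proof_eigenvalue_thm} takes $\mathbf{C}$ positive semidefinite ($\mathbf{A}^T\mathbf{A}$, $\mathbf{W}^2$, $\mathbf{E}^T\mathbf{E}$), so the positivity you assume is precisely the regime in which the theorem is actually applied.
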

\begin{proof}
   We prove the results for the tall ($N \geq M$) and wide ($N \leq M$) cases for $\mathbf{X}$ separately.

    \emph{Tall case ($N \geq M$):}  Let $\mathbf{X} = \mathbf{U} \begin{bmatrix} \boldsymbol{\Sigma} \\  \mathbf{0}_{(N-M) \times M} \end{bmatrix} \mathbf{V}^T$ be the SVD of $\mathbf{X}$, where $\boldsymbol{\Sigma} = \operatorname{diag}(\sigma_1, \ldots, \sigma_M) \in \mathbb{R}^{M \times M}$ contains the singular values in descending order. Note that we will have $\sigma_{R+1} = \cdots = \sigma_{M} = 0$ (the last $M-R$ singular values are zero). Let $\gamma$ be an arbitrary scalar to be determined later, and note that
    \begin{align}
        \boldsymbol{\Sigma} &= \operatorname{diag}(\overbrace{1, \ldots, 1,}^{R} \underbrace{0, \ldots, 0}_{M-R})  \cdot \operatorname{diag}(\overbrace{\sigma_1, \ldots, \sigma_{R}}^{R}, \underbrace{\gamma, \cdots, \gamma}_{M-R}) \in \mathbb{R}^{M \times M} \\
        &= \mathbf{J} \mathbf{D}_{\gamma}
    \end{align}
    where we have defined $\mathbf{J}$ and $\mathbf{D}_{\gamma}$ as their corresponding factors in the line above. We then find that 
    \begin{align}
        \mathbf{X}^T \mathbf{C} \mathbf{X} &= \mathbf{V} \begin{bmatrix} \boldsymbol{\Sigma}^T & \mathbf{0}_{M \times (N-M)}\end{bmatrix} \mathbf{U}^T \mathbf{C} \mathbf{U} \begin{bmatrix} \boldsymbol{\Sigma} \\ \mathbf{0}_{(N-M) \times M} \end{bmatrix} \mathbf{V}^T \\
        &= \mathbf{V} \mathbf{D}_{\gamma}^T \left( \begin{bmatrix} \mathbf{J}^T & \mathbf{0}_{M \times (N-M)} \end{bmatrix} \mathbf{U}^T \mathbf{C} \mathbf{U} \begin{bmatrix} \mathbf{J} \\ \mathbf{0}_{(N-M) \times M} \end{bmatrix} \right) \mathbf{D}_{\gamma} \mathbf{V}^T \\
        &= \mathbf{V} \mathbf{D}_{\gamma}^T \begin{bmatrix}
            \left( \mathbf{U}^T \mathbf{C} \mathbf{U} \right)_{1:R,1:R} & \mathbf{0}_{R \times (M-R)} \\ \mathbf{0}_{(M-R) \times R} & \mathbf{0}_{(M-R) \times (M-R)}
        \end{bmatrix} \mathbf{D}_{\gamma} \mathbf{V}^T \\
        &= \mathbf{V} \mathbf{D}_{\gamma}^T \mathbf{Z} \mathbf{D}_{\gamma} \mathbf{V}^T
    \end{align}
    where $\mathbf{Z}$ is defined from the previous line and  $(\mathbf{U}^T \mathbf{C} \mathbf{U})_{1:R, 1:R}$ denotes the leading principle submatrix of $\mathbf{U}^T \mathbf{C} \mathbf{U}$ of order $R$. Note that the first $R$ eigenvalues of $\mathbf{Z}$ are the same as those of $(\mathbf{U}^T \mathbf{C} \mathbf{U})_{1:R, 1:R}$ and that the remaining $M-R$ eigenvalues are all zero. For $i = 1, \ldots, R$, an application of the usual (square) Ostrowski theorem gives
  \begin{align}
    \lambda_i(\mathbf{X}^T \mathbf{C} \mathbf{X}) &= \lambda_i( \mathbf{D}_{\gamma}^T \mathbf{Z} \mathbf{D}_{\gamma} ) \label{eq:XtCXl1} \\
        &= \lambda_i( \mathbf{Z} ) \theta_i \\
        &= \lambda_i\left( (\mathbf{U}^T \mathbf{C} \mathbf{U})_{1:R,1:R} \right) \theta_i 
    \end{align}
    where $\theta_i$ is some scalar satisfying $ \lambda_{M}(\mathbf{D}_{\gamma}^T \mathbf{D}_{\gamma} )   \leq \theta_i \leq \lambda_{1} (\mathbf{D}_{\gamma}^T \mathbf{D}_{\gamma})$. Finally, using the Cauchy interlacing theorem we get the bound
    \begin{align}
       \lambda_{i+(N-R)}( \mathbf{C} ) = \lambda_{i+(N - R)}(\mathbf{U}^T \mathbf{C} \mathbf{U})  \leq  \lambda_i\left( (\mathbf{U}^T \mathbf{C} \mathbf{U})_{1:R,1:R} \right) \leq \lambda_i(\mathbf{U}^T \mathbf{C} \mathbf{U}) = \lambda_i(\mathbf{C}).
    \end{align}
    Incorporating the bound for $\theta_i$ yields 
    \begin{align}\label{eq:gamma_dependent_result}
       \lambda_{i+(N-R)}(\mathbf{C})  \lambda_M(\mathbf{D}_{\gamma}^T \mathbf{D}_{\gamma})  \leq \lambda_i(\mathbf{X}^T \mathbf{C} \mathbf{X}) \leq \lambda_i(\mathbf{C}) \lambda_1(\mathbf{D}_{\gamma}^T \mathbf{D}_{\gamma}).
    \end{align}

    Recall that  \eqref{eq:gamma_dependent_result} holds for all choices of the free parameter $\gamma \in \mathbb{R}$. Considering the choice $\gamma = \sigma_R$ gives $\lambda_M(\mathbf{D}_{\gamma}^T \mathbf{D}_{\gamma}) = \sigma_R^2 = \lambda_R(\mathbf{X}^T \mathbf{X})$, and similarly the choice $\gamma = \sigma_1$ gives $\lambda_1(\mathbf{D}_{\gamma}^T \mathbf{D}_{\gamma}) = \sigma_1^2 = \lambda_1(\mathbf{X}^T \mathbf{X})$. Combining \eqref{eq:gamma_dependent_result} for both of these choices gives the desired bounds
    \begin{align}
         \lambda_{i+(N-R)}(\mathbf{C})  \lambda_R(\mathbf{X}^T \mathbf{X})  \leq \lambda_i(\mathbf{X}^T \mathbf{C} \mathbf{X}) \leq \lambda_i(\mathbf{C}) \lambda_1(\mathbf{X}^T \mathbf{X})
    \end{align}
    for $i = 1, \ldots, R$.

    This argument can be slightly modified to produce the second inequality. When $\mathbf{C}$ is SPSD we may make use of $\mathbf{Z}^{\frac{1}{2}}$ to obtain
    \begin{align}
        \lambda_i(\mathbf{X}^T \mathbf{C} \mathbf{X}) &= \lambda_i(\mathbf{D}_{\gamma}^T \mathbf{Z} \mathbf{D}_{\gamma}) \\ 
        &= \lambda_i( ( \mathbf{D}_{\gamma}^T \mathbf{Z}^{\frac{1}{2}} ) ( \mathbf{Z}^{\frac{1}{2}} \mathbf{D}_{\gamma}) ) \\
        &= \lambda_i( ( \mathbf{Z}^{\frac{1}{2}} \mathbf{D}_{\gamma}) ( \mathbf{D}_{\gamma}^T \mathbf{Z}^{\frac{1}{2}} ) ) \\
        &\leq \lambda_i(\mathbf{D}_{\gamma} \mathbf{D}_{\gamma}^T) \lambda_1( \mathbf{Z} ) \\  
         &= \lambda_i(\mathbf{D}_{\gamma} \mathbf{D}_{\gamma}^T) \lambda_1( (\mathbf{U}^T \mathbf{C} \mathbf{U})_{1:R,1:R} ) \\ 
         &\leq \lambda_i(\mathbf{D}_{\gamma} \mathbf{D}_{\gamma}^T) \lambda_1(  \mathbf{C}  )
    \end{align}
    which yields $\lambda_i(\mathbf{X}^T \mathbf{C} \mathbf{X}) \leq \lambda_i(\mathbf{X}^T \mathbf{X}) \lambda_1Z(\mathbf{C})$ for $i = 1, \ldots, R$ with the choice $\gamma = 0$.

    \emph{Wide case ($N \leq M$):}  Let $\mathbf{X} = \mathbf{U} \begin{bmatrix} \boldsymbol{\Sigma} & \mathbf{0}_{N \times (M-N)}  \end{bmatrix} \mathbf{V}^T$ be the SVD of $\mathbf{X}$, where $\boldsymbol{\Sigma} = \operatorname{diag}(\sigma_1, \ldots, \sigma_N) \in \mathbb{R}^{N \times N}$. Note that we will have $\sigma_{R+1} = \cdots = \sigma_{N} = 0$ (the last $N-R$ singular values are zero). Let $\gamma$ be an arbitrary scalar to be determined later, and note that
    \begin{align}
        \boldsymbol{\Sigma} &= \operatorname{diag}(\overbrace{1, \ldots, 1,}^{R} \underbrace{0, \ldots, 0}_{N-R})  \cdot \operatorname{diag}(\sigma_1, \ldots, \sigma_{R}, \underbrace{\gamma, \cdots, \gamma}_{N-R}) \in \mathbb{R}^{N \times N} \\
        &= \mathbf{J} \mathbf{D}_{\gamma}
    \end{align}
    where we have defined $\mathbf{J}$ and $\mathbf{D}_{\gamma}$ as their corresponding factors in the line above. We then find that
    \begin{align}
        \mathbf{X}^T \mathbf{C} \mathbf{X} &= \mathbf{V} \begin{bmatrix} \boldsymbol{\Sigma}^T \\ \mathbf{0}_{(M-N) \times N} \end{bmatrix} \mathbf{U}^T \mathbf{C} \mathbf{U} \begin{bmatrix}
            \boldsymbol{\Sigma} & \mathbf{0}_{N \times (M-N)} 
        \end{bmatrix} \mathbf{V}^T \\
        &= \mathbf{V} \left( \begin{bmatrix} \mathbf{J}^T \\ \mathbf{0}_{(M-N) \times N } \end{bmatrix} \mathbf{D}_{\gamma}^T \mathbf{U}^T \mathbf{C} \mathbf{U} \mathbf{D}_{\gamma} \begin{bmatrix} \mathbf{J} & \mathbf{0}_{N \times (M-N)} \end{bmatrix}  \right) \mathbf{V}^T \\
        &= \mathbf{V} \begin{bmatrix}
            \left( \mathbf{D}_{\gamma}^T \mathbf{U}^T \mathbf{C} \mathbf{U} \mathbf{D}_{\gamma} \right)_{1:R,1:R} & \mathbf{0}_{R \times (M-R)} \\ \mathbf{0}_{(M-R) \times R} & \mathbf{0}_{(M-R) \times (M-R)}
        \end{bmatrix} \mathbf{V}^T \label{eq:XtCX_wide_final}
    \end{align}
    From \eqref{eq:XtCX_wide_final} we observe that 
    \begin{align}
    \lambda_i(\mathbf{X}^T \mathbf{C} \mathbf{X}) = \lambda_i\left( ( \mathbf{D}_{\gamma}^T  \mathbf{U}^T \mathbf{C} \mathbf{U} \mathbf{D}_{\gamma})_{1:R,1:R} \right), \quad i = 1, \ldots, R,
    \end{align} 
    and that the remaining $M-R$ eigenvalues of $\mathbf{X}^T \mathbf{C} \mathbf{X}$ are zero. Applying the Cauchy interlacing theorem yields 
    \begin{align}
         \lambda_{i + (N-R)}\left( \mathbf{D}_{\gamma}^T \mathbf{U}^T \mathbf{C} \mathbf{U} \mathbf{D}_{\gamma} \right)
 \leq  \lambda_i(\mathbf{X}^T \mathbf{C} \mathbf{X}) \leq \lambda_i( \mathbf{D}_{\gamma}^T \mathbf{U}^T \mathbf{C} \mathbf{U} \mathbf{D}_{\gamma}), \quad i = 1, \ldots, R.
    \end{align}
    
To proceed, we further extend the bounds. For the lower bound, using the usual (square) Ostrowski theorem we obtain
\begin{align}
    \lambda_{i+(N-R)}\left( \mathbf{D}_{\gamma}^T \mathbf{U}^T \mathbf{C} \mathbf{U} \mathbf{D}_{\gamma} \right) &= \theta_i \lambda_{i+(N-R)}(\mathbf{U}^T \mathbf{C} \mathbf{U})  \\
    &= \theta_i \lambda_{i+(N-R)}( \mathbf{C} )
\end{align}
for $i = 1, \ldots, R$, where $\theta_i$ is some number satisfying $ \lambda_N(\mathbf{D}_{\gamma}^T \mathbf{D}_{\gamma}) \leq \theta_i \leq \lambda_1(\mathbf{D}_{\gamma}^T \mathbf{D}_{\gamma})$. 
Considering the choice $\gamma = \sigma_R$  yields $\lambda_N(\mathbf{D}_{\gamma}^T \mathbf{D}_{\gamma}) = \sigma_R^2 = \lambda_R(\mathbf{X}^T \mathbf{X})$ and $\theta_i \lambda_{i+(N-R)}(\mathbf{C}) \geq \lambda_R(\mathbf{X}^T \mathbf{X}) \lambda_{i+(N-R)}(\mathbf{C})$. For the upper bound, using the  (square) Ostrowski theorem we obtain
\begin{align}
    \lambda_{i}(\mathbf{D}_{\gamma}^T \mathbf{U}^T \mathbf{C} \mathbf{U} \mathbf{D}_{\gamma}) 
    &= \xi_i \lambda_i(\mathbf{C}), \quad i = 1, \ldots, R,
\end{align}
 where $\xi_i$ is some number satisfying $\lambda_N(\mathbf{D}_{\gamma}^T \mathbf{D}_{\gamma}) \leq \xi_i \leq \lambda_1(\mathbf{D}_{\gamma}^T \mathbf{D}_{\gamma})$.  Considering the choice $\gamma = \sigma_1$ gives $\lambda_1(\mathbf{D}_{\gamma}^T \mathbf{D}_{\gamma}) = \sigma_1^2 = \lambda_1(\mathbf{X}^T \mathbf{X})$ and $\xi_i \lambda_{i}(\mathbf{C}) \leq \lambda_1(\mathbf{X}^T \mathbf{X}) \lambda_{i}(\mathbf{C})$.  Combining the lower and upper bounds gives 
\begin{align}
     \lambda_{i+(N-R)}( \mathbf{C} ) \lambda_{R}(\mathbf{X}^T \mathbf{X})  \leq \lambda_i(\mathbf{Y}) \leq \lambda_i(\mathbf{X}^T \mathbf{C} \mathbf{X}) \lambda_1(\mathbf{X}^T \mathbf{X}), \quad i = 1, \ldots, R,
\end{align}
as desired, 
with the remaining $M-R$ eigenvalues of $\mathbf{X}^T \mathbf{C} \mathbf{X}$ equal to zero. 

This argument can be slightly modified to produce the second inequality. When $\mathbf{C}$ is SPSD we can write 
\begin{align}
    \lambda_i(\mathbf{X}^T \mathbf{C} \mathbf{X}) &\leq \lambda_i(\mathbf{D}_{\gamma}^T \mathbf{U}^T \mathbf{C} \mathbf{U} \mathbf{D}_{\gamma}) \\
    &= \lambda_i( (\mathbf{C}^{\frac{1}{2}} \mathbf{U} \mathbf{D}_{\gamma}) ( \mathbf{D}_{\gamma}^T \mathbf{U}^T \mathbf{C}^{\frac{1}{2}} ) ) \\
    &\leq \lambda_1(\mathbf{C}) \lambda_i(\mathbf{D}_{\gamma} \mathbf{D}_{\gamma}^T)
\end{align}
which gives $\lambda_i(\mathbf{X}^T \mathbf{C} \mathbf{X}) \leq \lambda_1(\mathbf{C}) \lambda_i(\mathbf{X}^T \mathbf{X})$ with the choice $\gamma = 0$.

\end{proof}

\begin{figure}
    \centering
    \includegraphics[width=1.0\textwidth]{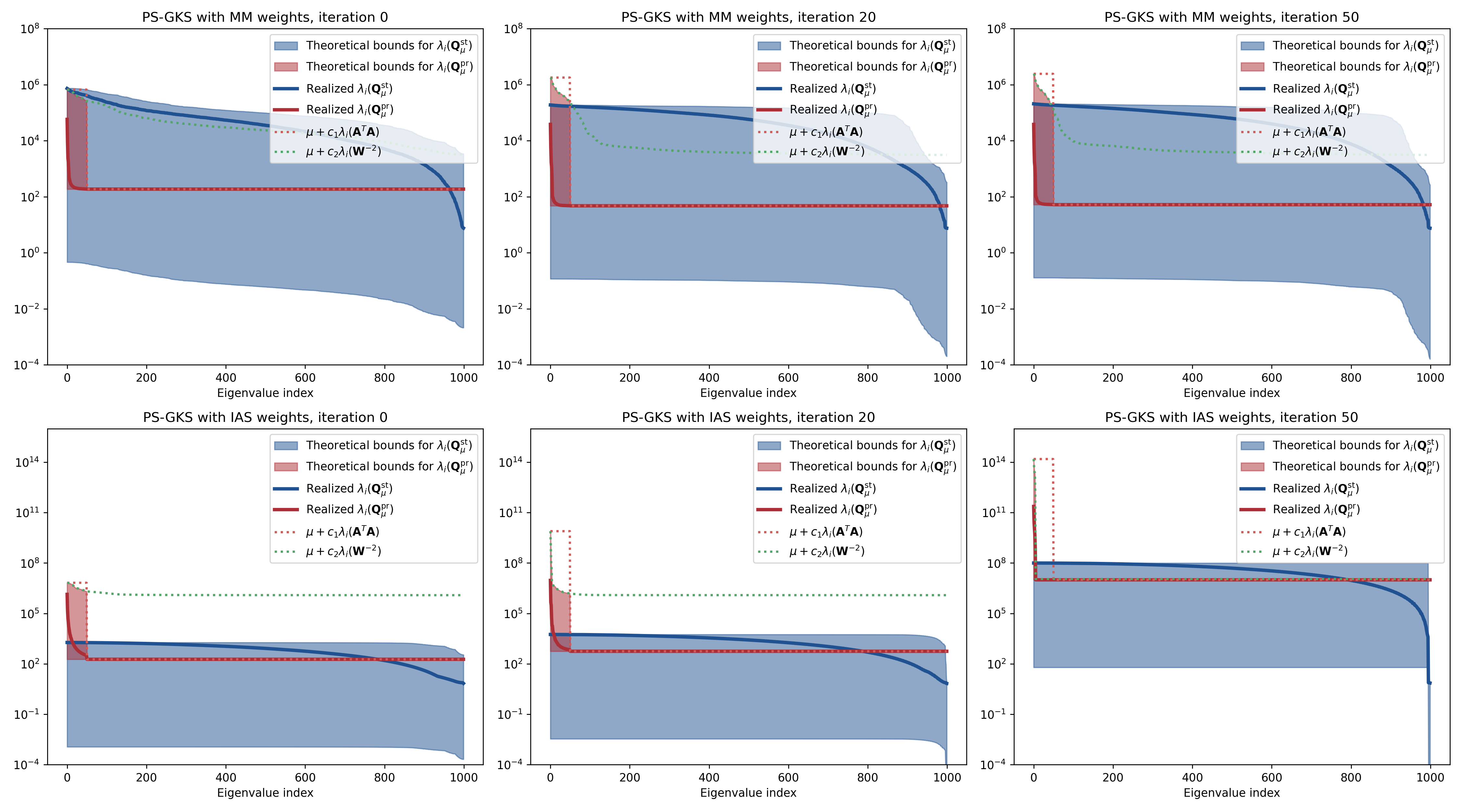}
    \caption{Test 1. Spectrum of $\mathbf{Q}_{\regparam}^{\text{st}}$ and $\mathbf{Q}_{\regparam}^{\text{pr}}$ as the PS-GKS iterations progress. We show the theoretical eigenvalue bounds predicted by the theory developed in \Cref{sub:analysis_of_priorconditioning}, as well as the realized spectra. (top row) Results using MM weights. (bottom row) Results using IAS weights.}
\label{fig:eigenvalue_bounds_plot}
\end{figure}

\section{Computation of pseudoinverses}\label{sec:pseudoinverses} The main computational obstacle introduced by the PS-GKS method when compared with S-GKS is the requirement of computing matvecs with the pseudoinverses $\boldsymbol{\Psi}_{\itidx}^\dagger$ and $(\boldsymbol{\Psi}_{\itidx}^\dagger)^T$. We assume here that $\boldsymbol{\Psi}^{-1}$ is not invertible. Recall that it in general $(\mathbf{W}_{\itidx} \boldsymbol{\Psi_\itidx})^\dagger \neq \boldsymbol{\Psi}_\itidx^\dagger \mathbf{W}_{\itidx}^{-1}$,\footnote{Notable exceptions of when $(\mathbf{W}_{\itidx} \boldsymbol{\Psi_\itidx})^\dagger = \boldsymbol{\Psi}_\itidx^\dagger \mathbf{W}_{\itidx}^{-1}$ holds include when $\boldsymbol{\Psi}$ is invertible or has linearly independent rows.} which means that an offline computation of $\boldsymbol{\Psi}^\dagger$ is not immediately of use. 

For problems of sufficiently low dimension (small $N$), one may employ an approximation to the pseudoinverse such as 
\begin{align}\label{eq:pinv_delta_approx}
    \boldsymbol{\Psi}_{\itidx}^\dagger \approx (\boldsymbol{\Psi}_{\itidx}^T \boldsymbol{\Psi}_{\itidx} + \delta \mathbf{I}_N)^{-1} \boldsymbol{\Psi}_{\itidx}^T, \quad (\boldsymbol{\Psi}_{\itidx}^\dagger)^T \approx \boldsymbol{\Psi}_{\itidx} (\boldsymbol{\Psi}_{\itidx}^T \boldsymbol{\Psi}_{\itidx} + \delta \mathbf{I}_N)^{-1},
\end{align}
for some small $\delta > 0$. This is particularly convenient when $\boldsymbol{\Psi}^T \boldsymbol{\Psi}$ possesses banded structure, since in this case matvecs with the inverses in \cref{eq:pinv_delta_approx} may be applied in $\mathcal{O}(B^2 N)$ flops using a banded Cholesky factorization \cite{rue2005gaussian}, where $B$ denotes the bandwidth. 

For problems of high dimension (large $N$), iterative methods provide an effective means of computing matvecs with the pseudoinverses. One such method was recently proposed in \cite{gazzola2019flexible, gazzola2021flexible},  where the matvec $\boldsymbol{\xi} = \boldsymbol{\Psi}_{\itidx}^\dagger \mathbf{y}$ is computed by applying the LSQR \cite{paige1982lsqr} or LSMR \cite{fong2011lsmr} algorithm to the right-preconditioned least squares problem
\begin{align}
    \min_{\hat{\boldsymbol{\xi}} \in \mathbb{R}^N} \left\{  \| \mathbf{W}_{\itidx} \boldsymbol{\Psi} \mathbf{P}_{\itidx} \hat{\boldsymbol{\xi}} - \mathbf{y}  \|_2^2  \right\}, \quad \boldsymbol{\xi} = \mathbf{P}_{\itidx} \hat{\boldsymbol{\xi}}.
\end{align}
Choices for the preconditioner $\mathbf{P}_{\itidx}$ are suggested as $\mathbf{P}_{\itidx}^{(1)} = \boldsymbol{\Psi}^\dagger$, $\mathbf{P}_{\itidx}^{(2)} = \boldsymbol{\Psi}^\dagger \mathbf{W}_{\itidx}^{-1}$, or a diagonal preconditioned $\mathbf{P}_{\itidx}^{(3)}$ based on row scaling. In our work, we examine  For large 2D imaging inverse problems defined on an $N = N_x \times N_y$ grid, a common choice of sparsifying transformation is one similar to
\begin{align}
    \mathbf{\Psi} = \begin{bmatrix} \boldsymbol{\Psi}^{(N_y)}_{\text{1D}} \otimes \mathbf{I}_{N_x} \\ \mathbf{I}_{N_y} \otimes \boldsymbol{\Psi}^{(N_x)}_{\text{1D}}
    \end{bmatrix} \in \mathbb{R}^{ 2N \times N}, \quad  
   \boldsymbol{\Psi}_{\text{1D}}^{(L)} \coloneqq  \begin{bmatrix}
        1 & -1 & & \\ 
        & \ddots & \ddots & \\ 
        & & 1 & -1 \\
        & & & 0
    \end{bmatrix} \in \mathbb{R}^{L \times L},
\end{align}
which corresponds to an anisotropic two-dimensional discrete gradient operator with Neumann boundary conditions. Note that here $\mathbf{K}$ may be chosen as $\mathbf{K} = \mathbf{1}_N$. For such $\boldsymbol{\Psi}$, using $\boldsymbol{\Psi}^\dagger$ as a preconditioner according to the method of \cite{gazzola2021flexible} requires an upfront expense of $\mathcal{O}(N^{3/2})$ flops in order to build an expression for $\boldsymbol{\Psi}^\dagger$ in terms of the kronecker products and the SVD of $\boldsymbol{\Psi}_{\text{1D}}^{(N_y)}$/$\boldsymbol{\Psi}_{\text{1D}}^{(N_x)}$. 

Here, we use a different method to compute matvecs with $\boldsymbol{\Psi}_{\itidx}^\dagger$ which utilizes a singular CG method with a spectral preconditioner. The advantage of our method is essentially cheaper computation of the pseudoinverse $\boldsymbol{\Psi}^\dagger$ which we evaluate with cost $\mathcal{O}(N \log N)$ at each instance. The matrix  $\boldsymbol{\Psi}^T \boldsymbol{\Psi}$ can be expressed as the sum of specially-structured matrices,\footnote{Specifically, in the Neumann boundary condition case $\boldsymbol{\Psi}^T \boldsymbol{\Psi}$ can be written as the sum of block Toeplitz with Toeplitz blocks (BTTB), block Toeplitz with Hankel blocks (BTHB), block Hankel with Toeplitz blocks (BHTB), and block Hankel with Hankel blocks (BHHB) matrices \cite{hansen2006deblurring}. In the Dirichlet boundary condition case, $\boldsymbol{\Psi}^T \boldsymbol{\Psi}$ is a block Toeplitz with Toeplitz blocks (BTTB) matrix and can be diagonalized by the type I discrete sine transform (DST). In the periodic boundary condition case, $\boldsymbol{\Psi}^T \boldsymbol{\Psi}$ is a block circulant with circulant blocks (BCCB) matrix and can be diagonalized by the discrete Fourier transform (DFT).} such that it can be diagonalized \emph{a priori} by the (orthonormal, type II) two-dimensional discrete cosine transform (DCT) (e.g., see \cite{hansen2006deblurring, Strang1999DCT, Makhoul1980FastCosine}). Specifically, letting $\mathbf{B}$ denote the DCT for a $N_x \times N_y$ grid, it holds that
\begin{align}\label{eq:dct_diagonalization}
  \mathbf{M} \coloneqq \boldsymbol{\Psi}^T \boldsymbol{\Psi} = \mathbf{B}^T \boldsymbol{\Lambda} \mathbf{B},
\end{align}
where $\boldsymbol{\Lambda}$ is a diagonal matrix with nonnegative entries containing the eigenvalues of $\boldsymbol{\Psi}^T \boldsymbol{\Psi}$, and $\mathbf{B}^T = \mathbf{B}^{-1}$ denotes the inverse DCT. Note that the eigenvalues are quickly computed as $\boldsymbol{\Lambda} = \operatorname{diag}( (\mathbf{B} \boldsymbol{\Psi}^T \boldsymbol{\Psi} \mathbf{B}^T \mathbf{y}) \oslash \mathbf{y})$
for a vector $\mathbf{y} \in \mathbb{R}^N$ with nonzero entries and $\oslash$ denoting component-wise division. To compute the pseudoinverse matvec $\boldsymbol{\Psi}_{\itidx} ^\dagger \mathbf{y}$, we recall the identity $\mathbf{C}^\dagger = (\mathbf{C}^T \mathbf{C})^\dagger \mathbf{C}^T$, so the problem reduces to computing a matvec with $(\boldsymbol{\Psi}^T \mathbf{W}_\itidx^2 \boldsymbol{\Psi})^\dagger$. The matvec $\boldsymbol{\xi} = (\boldsymbol{\Psi}^T \mathbf{W}_{\itidx}^2 \boldsymbol{\Psi})^\dagger \mathbf{y}$ may be obtained by applying the CG method with an initialization $\boldsymbol{\xi}_0 \in \operatorname{col}(\boldsymbol{\Psi}^T)$ to the symmetric semipositive-definite system
\begin{align}
    (\boldsymbol{\Psi}^T \mathbf{W}_{\itidx}^2 \boldsymbol{\Psi}) \boldsymbol{\xi} = \mathbf{y}, 
\end{align}
which may be preconditioned using  $\mathbf{M}$ given in \cref{eq:dct_diagonalization} as a DCT preconditioner. See \cite[Appendix B]{lindbloom2024generalized} for details. 

We emphasize that our method for computing $\boldsymbol{\Psi}^\dagger$ requires $\mathcal{O}(N \log N)$ flops at each instance, especially for large-scale imaging problems. In practice, our method can be massively accelerated by using a highly efficient GPU implementation of the DCT provided by a library such as \texttt{clFFT} \cite{clfft_github}, \texttt{cuFFT} \cite{nvidia_cufft}, or \texttt{CuPy} \cite{nishino2017cupy}.

\section{Priorconditioned GKB and FGK methods}\label{ssec:psgkb_and_fgk} The methods discussed in the paper discussed so far fall under the umbrella of GKS-type hybrid projection methods; our PS-GKS method is the first of GKS-type that utilizes priorconditioned subspaces. There exists a competing class of priorconditioned Golub-Kahan (GK) -type hybrid projection methods based on the Golub-Kahan bidiagonalization (GKB) algorithm or the flexible Golub-Kahan (FGK) decomposition \cite{chung2019flexible}.

\subsection{GKB method}\label{ssub:gkb} It is straightforward to devise an analogue of our PS-GKS based on the GKB process, which we will refer to as PS-GKB. Such a method was first proposed in \cite{gazzola2021flexible}. At the $\itidx$th iteration of PS-GKB we form the priorconditioned problem
\begin{align}\label{eq:priorconditioned_fullscale_GKB}
    \argmin_{ \mathbf{x} \in \mathbb{R}^N } \left\{ \| \mathbf{A}\mathbf{x} - \mathbf{b} \|_2^2 + \regparam_{\itidx} \| \boldsymbol{\Psi}_{\itidx} \mathbf{x}  \|_2^2 \right\} = (\boldsymbol{\Psi}_{\itidx+1})_{\mathbf{A}}^\dagger \left( \argmin_{\mathbf{z} \in \mathbb{R}^{K}} \left\{ \| \overline{\mathbf{A}}_{\itidx} \mathbf{z} - \overline{\mathbf{b}} \|_2^2 + \regparam_{\itidx} \| \mathbf{z} \|_2^2 \right\} \right) + \mathbf{x}_{\text{ker}}
\end{align}
where $\overline{\mathbf{b}} = \mathbf{b} - \mathbf{A} \mathbf{x}_{\text{ker}}$ and $\mathbf{x}_{\text{ker}} = \mathbf{K} (\mathbf{A} \mathbf{K})^\dagger \mathbf{b}$. Next, instead of projecting the problem in the RHS of \cref{eq:priorconditioned_fullscale_GKB} onto a generalized Krylov subspace as in PS-GKS, we instead project it onto $\operatorname{col}(\mathbf{V}_{\itidx})$ where $\mathbf{V}_\itidx$ arises from the Golub-Kahan bidiagonalization
\begin{align}\label{eq:GKB}
    \overline{\mathbf{A}}_{\itidx} \mathbf{V}_{\itidx} = \mathbf{U}_{\itidx+1} \mathbf{B}_{\itidx}, \quad \overline{\mathbf{A}}_{\itidx}^T \mathbf{U}_{\itidx+1} = \mathbf{V}_{\itidx+1} \hat{\mathbf{B}}_{\itidx+1}^T
\end{align}
for matrices $\mathbf{U}_{\itidx+1} \in \mathbb{R}^{M \times (\itidx+1)}$,  $\mathbf{V}_{\itidx+1} \in \mathbb{R}^{N \times (\itidx+1)}$ with orthonormal columns and lower bidiagonal matrices $\mathbf{B}_{\itidx} \in \mathbb{R}^{(\itidx+1)\times \itidx}, \hat{\mathbf{B}}_{\itidx+1} \in \mathbb{R}^{(\itidx+1)\times(\itidx+1)}$. Using \cref{eq:GKB}, the projected problem can be expressed as
\begin{align}\label{eq:projected_problem_GKB}
    \mathbf{y}_\itidx = \argmin_{\mathbf{y} \in \mathbb{R}^{\itidx} } \left\{ \| \mathbf{B}_{\itidx} \mathbf{y} - \| \overline{\mathbf{b}} \|_2 \mathbf{e}_1 \|_2^2 + \regparam_{\itidx} \| \mathbf{y}  \|_2^2  \right\}
\end{align}
where $\mathbf{e}_1 = [1, 0, \ldots, 0]^T \in \mathbb{R}^{\itidx+1}$ and the solution at the $\itidx$th iteration is recovered as $\mathbf{x}_{\itidx} = \mathbf{x}_{\text{ker}} +  (\boldsymbol{\Psi}_{\itidx})_{\mathbf{A}}^\dagger \mathbf{V}_{\itidx} \mathbf{y}_{\itidx}$. The parameter $\regparam_{\itidx}$ can be selected using a regularization parameter selection method such as DP. This process is repeated for increasing $\itidx$, where a new weighting matrix $\mathbf{W}_{\itidx}$ is computed at each iteration. We note that the $\itidx$th iteration of PS-GKB requires $\mathcal{O}(\itidx)$ matvecs with $\mathbf{A}/\mathbf{A}^T$ and $(\boldsymbol{\Psi_{\itidx}})_{\mathbf{A}}^\dagger/((\boldsymbol{\Psi_{\itidx}})_{\mathbf{A}}^\dagger)^T$ which is the same as the $\itidx$th iteration of PS-GKS.

\begin{algorithm}[!ht]
\caption{The PS-GKB method}%
\label{alg:psgkb}
\begin{algorithmic}[1]
	\Require{$\bA, \boldsymbol{\Psi}, \mathbf{b},  \mathbf{K}$} 
 \Ensure{An approximate solution $\bx_{\itidx+1}$}
	\Function{$\bx_{\itidx+1} = $ PS-GKB }{$\bA, \boldsymbol{\Psi}, \mathbf{b}, \mathbf{K}$} \;
        \noindent \State $\mathbf{A} \mathbf{K} = \mathbf{Q}_{\text{ker}} \mathbf{R}_{\text{ker}}$ and $(\mathbf{A} \mathbf{K})^\dagger = \mathbf{R}_{\text{ker}}^{-1} \mathbf{Q}_{\text{ker}}^T$\;\Comment{$(\mathbf{A} \mathbf{K})^\dagger$ via economic QR}
        \State $\mathbf{x}_{\text{ker}} = \mathbf{K} \mathbf{R}_{\text{ker}}^{-1} \mathbf{Q}_{\text{ker}}^T \mathbf{b}$ and $\overline{\mathbf{b}} = \mathbf{b} - \mathbf{A} \mathbf{x}_{\text{ker}}$\; \Comment{Fixed component in $\ker(\boldsymbol{\Psi})$}
\FOR {$\itidx=1,2,\ldots$ \text{until convergence}}{
\State Update weights $\mathbf{W}_{\itidx} = \operatorname{diag}(\mathbf{w}_{\itidx})$ and $\boldsymbol{\Psi}_{\itidx} = \mathbf{W}_{\itidx} \boldsymbol{\Psi}$ given $\boldsymbol{\Psi} \mathbf{x}_{\itidx-1}$\;
\State Build operators for  $\boldsymbol{\Psi}_{\itidx}^\dagger$, $( \boldsymbol{\Psi}_{\itidx})_{\mathbf{A}}^\dagger = ( \mathbf{I}_N - \mathbf{K}(\mathbf{A} \mathbf{K})^\dagger \mathbf{A} ) \boldsymbol{\Psi}_{\itidx}^\dagger$, and $\overline{\mathbf{A}}_{\itidx} = \mathbf{A} ( \boldsymbol{\Psi}_{\itidx})_{\mathbf{A}}^\dagger$\;
\State Compute the GKB $\overline{\mathbf{A}}_{\itidx} \mathbf{V}_{\itidx} = \mathbf{U}_{\itidx+1} \mathbf{B}_{\itidx}$, $\overline{\mathbf{A}}_{\itidx}^T \mathbf{U}_{\itidx+1} = \mathbf{V}_{\itidx+1}\; 
\hat{\mathbf{B}}_{\itidx+1}^T$\;
\State Select $\regparam_{\itidx}$ by heuristic (e.g., DP) on \cref{eq:projected_problem_GKB}\;\Comment{Regularization parameter selection} 
\State $\mathbf{y}_{\itidx}$ to satisfy \cref{eq:projected_problem_GKB} with selected $\regparam_{\itidx}$\;
\State $\mathbf{x}_{\itidx} =  (\boldsymbol{\Psi}_{\itidx})_{\mathbf{A}}^\dagger \mathbf{V}_{\itidx} \mathbf{y}_{\itidx} + \mathbf{x}_{\text{ker}}$ }
\ENDFOR
	\EndFunction
\end{algorithmic}
\end{algorithm}

\subsection{FGK methods}\label{ssub:fgk} FGK hybrid projection methods are based on the flexible preconditioning framework of \cite{notay2000flexible, saad1993flexible, simoncini2007recent}. The main differences between FGK methods and the PS-GKB method are that FGK methods require only a single matvec with $\mathbf{A}/\mathbf{A}^T$ and $(\boldsymbol{\Psi_{\itidx}})_{\mathbf{A}}^\dagger/((\boldsymbol{\Psi_{\itidx}})_{\mathbf{A}}^\dagger)^T$ in each iteration, and that the approximation subspace of FGK methods depends on the entire history of weight matrices $\{ \mathbf{W}_j  \}_{j \leq \itidx}$, while the approximation subspace of PS-GKB depends on only the latest weight matrix $\mathbf{W}_\itidx$.

Assuming for the moment that $\boldsymbol{\Psi}^{-1}$ exists, the FGK process produces the factorization
\begin{align}\label{eq:FGK_process}
    \mathbf{A} \mathbf{Z}_{\itidx} = \mathbf{U}_{\itidx+1} \mathbf{M}_{\itidx}, \quad \mathbf{A}^T \mathbf{U}_{\itidx+1} = \mathbf{V}_{\itidx+1} \mathbf{S}_{\itidx+1},
\end{align}
where the columns of $\mathbf{U}_{\itidx+1}$ and $\mathbf{V}_{\itidx+1}$ are orthonormal, $\mathbf{M}_{\itidx}$ is upper Hessenberg, and $\mathbf{S}_{\itidx+1}$ is upper triangular. The approximation subspace for the solution at the $\itidx$th iteration is taken to be $\operatorname{col}(\mathbf{Z}_{\itidx})$, where several choices of $\mathbf{Z}_{\itidx}$ appear in the literature. When $\boldsymbol{\Psi}^{-1}$ exists, the choice $\mathbf{Z}_{\itidx}^{(a)} = [ \boldsymbol{\Psi}_1^{-T} \mathbf{v}_1, \ldots, \boldsymbol{\Psi}_{\itidx}^{-T} \mathbf{v}_{\itidx} ]$ corresponds to that of \cite{chung2019flexible}. For general $\boldsymbol{\Psi}$, the choice 
\begin{align}
    \mathbf{Z}_{\itidx}^{(b)} = \begin{bmatrix} (\boldsymbol{\Psi}_1)_{\mathbf{A}}^\dagger ( (\boldsymbol{\Psi}_1)_{\mathbf{A}}^\dagger)^T \mathbf{v}_1 \quad  \cdots \quad (\boldsymbol{\Psi}_{\itidx})_{\mathbf{A}}^\dagger ( (\boldsymbol{\Psi}_{\itidx})_{\mathbf{A}}^\dagger)^T \mathbf{v}_{\itidx} \end{bmatrix}
\end{align}
has been proposed in  \cite{gazzola2021flexible, gazzola2021iteratively} and is what we employ for the FGK methods in this investigation. We assume throughout that the FGK process is break-down free, i.e., that $\operatorname{rank}(\mathbf{Z}_\itidx) = \itidx$ for each $\itidx$. 

In this investigation, we only consider FGK methods corresponding to the ``first-regularize-then-project'' framework \cite[\S 6.4]{hansen2010discrete}. Included in this framework are the IRW-LSQR method of \cite{gazzola2021iteratively} and the F-TV method method of \cite{gazzola2021flexible}. Such FGK methods utilize the projected problem
\begin{align}\label{eq:FGK_iteration}
    \mathbf{x}_{\itidx} =  \mathbf{x}_{\text{ker}} + \argmin_{\mathbf{x} \in \operatorname{col}(\mathbf{Z}_{\itidx})} \left\{ \|\mathbf{A} \mathbf{x} - \mathbf{b} \|_2^2 + \regparam_{\itidx} \| \boldsymbol{\Psi}_{\itidx} \mathbf{x}  \|_2^2 \right\}, 
\end{align}
in the $\itidx$th iteration. Inserting \cref{eq:FGK_process} into the minimization in \cref{eq:FGK_iteration} produces the equivalent problem
\begin{align}\label{eq:fgk_projected_problem_4}
    \mathbf{y}_{\itidx} = \argmin_{\mathbf{y} \in \mathbb{R}^{\itidx} } \,\, \| \mathbf{M}_{\itidx} \mathbf{y} - \| \mathbf{b} \|_2 \mathbf{e}_1 \|_2^2 + \regparam_{\itidx} \| \mathbf{R}_{\boldsymbol{\Psi}} \mathbf{y}  \|_2^2
\end{align}
where $\boldsymbol{\Psi}_{\itidx} \mathbf{Z}_{\itidx} = \mathbf{Q}_{\boldsymbol{\Psi}} \mathbf{R}_{\boldsymbol{\Psi}}$ denotes an economic QR factorization. The solution at the $\itidx$th iteration is recovered as $\mathbf{x}_{\itidx} = \mathbf{x}_{\text{ker}} + \mathbf{Z}_{\itidx} \mathbf{y}_{\itidx}$.

\begin{algorithm}[!ht]
\caption{The flexible Golub-Kahan (FGK) method}%
\label{alg:fgk}
\begin{algorithmic}[1]
	\Require{$\bA, \boldsymbol{\Psi}, \mathbf{b}, \bx_0, \mathbf{K}$} 
 \Ensure{An approximate solution $\bx_{k+1}$}
	\Function{$\bx_{k+1} = $ PS-GKS }{$\bA, \boldsymbol{\Psi}, \mathbf{b}, \bx_{0}, \mathbf{K}$} \;
        \noindent \State $\mathbf{A} \mathbf{K} = \mathbf{Q}_{\text{ker}} \mathbf{R}_{\text{ker}}$ and $(\mathbf{A} \mathbf{K})^\dagger = \mathbf{R}_{\text{ker}}^{-1} \mathbf{Q}_{\text{ker}}^T$\;\Comment{$(\mathbf{A} \mathbf{K})^\dagger$ via economic QR}
        \State $\mathbf{x}_{\text{ker}} = \mathbf{K} \mathbf{R}_{\text{ker}}^{-1} \mathbf{Q}_{\text{ker}}^T \mathbf{b}$ and $\overline{\mathbf{b}} = \mathbf{b} - \mathbf{A} \mathbf{x}_{\text{ker}}$\;\Comment{Fixed component in $\ker(\boldsymbol{\Psi})$}
    \State $\mathbf{u}_1 = \overline{\mathbf{b}}/ \| \overline{\mathbf{b}} \|_2$,  $\mathbf{U}_1 = [\mathbf{u}_1]$, $\mathbf{V}_0 = [\,]$, $\mathbf{Z}_0 = [\,]$
\FOR {$\itidx=1,2,\ldots$ \text{until convergence}}{
\State $\overline{\mathbf{v}}_{\itidx} = \mathbf{A}^T \mathbf{u}_{\itidx}$, $\mathbf{v}_{\itidx} = (\mathbf{I} - \mathbf{V}_{\itidx-1} \mathbf{V}_{\itidx-1}^T) \overline{\mathbf{v}}_\itidx$, $\mathbf{v}_{\itidx} = \mathbf{v}_{\itidx}/\|\mathbf{v}_{\itidx}\|_2$, $\mathbf{V}_{\itidx} = [\mathbf{V}_{\itidx-1} \,\, \mathbf{v}_{\itidx}]$\;
\State Update weights $\mathbf{W}_{\itidx} = \operatorname{diag}(\mathbf{w}_{\itidx})$ and $\boldsymbol{\Psi}_{\itidx} = \mathbf{W}_{\itidx} \boldsymbol{\Psi}$ given $\boldsymbol{\Psi}\mathbf{x}_{\itidx-1}$\;
\State Build operators for  $\boldsymbol{\Psi}_{\itidx}^\dagger$, $( \boldsymbol{\Psi}_{\itidx})_{\mathbf{A}}^\dagger = ( \mathbf{I}_N - \mathbf{K}(\mathbf{A} \mathbf{K})^\dagger \mathbf{A} ) \boldsymbol{\Psi}_{\itidx}^\dagger$, and $\overline{\mathbf{A}}_{\itidx} = \mathbf{A} ( \boldsymbol{\Psi}_{\itidx})_{\mathbf{A}}^\dagger$\;
\State $\mathbf{z}_{\itidx} = ((\boldsymbol{\Psi}_{\itidx})_{\mathbf{A}}^\dagger)^T (\boldsymbol{\Psi}_{\itidx})_{\mathbf{A}}^\dagger \mathbf{v}_{\itidx}$, $\mathbf{Z}_{\itidx} = [\mathbf{Z}_{\itidx-1} \,\, \mathbf{z}_{\itidx}]$\;
\State $\overline{\mathbf{u}}_{\itidx+1} = \mathbf{A} \mathbf{z}_{\itidx}$, $\mathbf{u}_{\itidx+1} = (\mathbf{I} - \mathbf{U}_{\itidx} \mathbf{U}_{\itidx}^T) \overline{\mathbf{u}}_{\itidx+1}$, $\mathbf{u}_{\itidx+1} = \mathbf{u}_{\itidx+1}/\| \mathbf{u}_{\itidx+1} \|_2$, $\mathbf{U}_{\itidx+1} = [ \mathbf{U}_{\itidx} \,\, \mathbf{u}_{\itidx+1}]$\;
\State Select $\regparam_{\itidx}$ by heuristic (e.g., DP) \;\Comment{Regularization parameter selection}
\State $\mathbf{y}_{\itidx}$ to satisfy \cref{eq:fgk_projected_problem_4} with selected $\regparam_{\itidx}$\;
\State $\mathbf{x}_{\itidx} = \mathbf{x}_{\text{ker}} + \mathbf{Z}_{\itidx} \mathbf{y}_{\itidx}$ }
\ENDFOR
	\EndFunction
\end{algorithmic}
\end{algorithm}

\bibliographystyle{siamplain}
\bibliography{Arxiv}

\end{document}